\documentclass[11pt]{amsart}
\usepackage{amsmath,amsfonts,amssymb,amsthm,aliascnt}
\usepackage{latexsym,bm,graphicx}
\usepackage{mathrsfs}
\usepackage{color}
\definecolor{linkred}{rgb}{0.6,0.25,0.45}

\usepackage[
linktocpage,
colorlinks=true,
citecolor=linkred,
linkcolor=blue,
urlcolor=linkred,
backref=page
]{hyperref}
\usepackage[all]{xy}
\usepackage{enumitem}

\newcommand{\customitemize}[1]{%
	\begin{enumerate}[label={(#1\arabic*)}, ref=#1\arabic*]
	}

\title[Fundamental groups of $5$-manifolds]{Fundamental Groups in the Five Dimensional Pan--Rong Conjecture}
\author{Hongzhi Huang}
\address{Hongzhi Huang \\ Department of Mathematics \\ Jinan University\\ Guangzhou 510632}
\email{\href{mailto:huanghz@jnu.edu.cn}{huanghz@jnu.edu.cn}
}

\newtheorem{thm}{Theorem}[section]
\newtheorem{prop}[thm]{Proposition}
\newtheorem{lem}[thm]{Lemma}
\newtheorem{slem}[thm]{Sublemma}

\newtheorem{cor}[thm]{Corollary}

\newtheorem{claim}[thm]{Claim}

\newtheorem{mainthm}{Theorem}[section] 
\newaliascnt{myMainThm}{mainthm}        
\newtheorem*{myMainThm*}{Theorem \Alph{mainthm}} 

\theoremstyle{definition}
\theoremstyle{remark}

\newtheorem{defn}[thm]{Definition}
\newtheorem{rem}[thm]{Remark}

\numberwithin{equation}{section}

\newcommand {\diam }{\mathrm{diam}}
\newcommand {\Isom }{\mathrm{Isom}}

\newcommand {\vol }{\mathrm{vol}}

\newcommand {\id}{\mathrm{id}}

\newcommand {\Z}{\mathbb{Z}}

\newcommand {\R}{\mathbb{R}}

\newcommand {\GH}{\xrightarrow{GH}}

\newcommand {\Q}{\mathbb{Q}}

\newcommand {\spa}[1]{\langle{#1}\rangle}

\newcommand {\Ric}{\mathrm{Ric}}

\newcommand{\XXint}[3]{{
		\setbox0=\hbox{$#1{#2#3}{\int}$}
		\vcenter{\hbox{$#2#3$}}\kern-.5\wd0}}

\usepackage{etoolbox}
\makeatletter

\patchcmd{\@tocline}{\hfil}{\dotfill}{}{}

\patchcmd{\@startsection}
{\@sect{#1}{\@m}}
{\ifstrequal{#1}{subsection}
	{\@sect{#1}{\@M}}
	{\@sect{#1}{\@m}}}
{}{}

\makeatother

\date{\today}

\begin{document}

\maketitle
\begin{abstract}
Let $M$ be a complete open $5$-manifold with nonnegative Ricci curvature. If its universal cover $\tilde M$ has Euclidean volume growth, then $\pi_1(M)$ is finitely generated and virtually $\Z^k$ for some $0\le k\le4$. This confirms a conjecture of Pan and Rong \cite{PR18} in dimension five; see also \cite{BNS25,BrNaSe25}. In dimension six, under the same volume growth assumption, we also obtain a quantitative virtual abelianness result without assuming finite generation of the fundamental group.

\vspace*{10pt}
\noindent {\it 2020 Mathematics Subject Classification}: 	53C21, 53C23.
	
	\vspace*{10pt}
	\noindent{\it Keywords}: fundamental groups, nonnegative Ricci curvature, Milnor conjecture.

\end{abstract}

\setcounter{secnumdepth}{2}
\tableofcontents

\section{Introduction}

An open (complete, noncompact) $n$-manifold $M$ with $\Ric\ge0$ is said to have Euclidean volume growth if there exist $\theta\in(0,1]$ and $p\in M$ such that
\begin{equation}\label{eq:v}
	\vol(B_r(p))\ge\theta\,\omega_n\,r^n\qquad\text{for all }r>0,
\end{equation}
where $\omega_n$ is the volume of the unit ball in $\R^n$. In \cite{PR18}, Pan-Rong conjectured that, for $n\ge2$, the fundamental group of an open $n$-manifold with $\Ric\ge0$ whose universal cover has Euclidean volume growth is finitely generated (see also \cite{BNS25,BrNaSe25}). Without the volume growth assumption \eqref{eq:v}, the conclusion holds for $n=2,3$ \cite{CV35,Liu13} but fails for $n\ge4$ \cite{BNS25,BrNaSe25,WuYan26}. Under this assumption, it was confirmed in dimension four in \cite{HH25a}, while the general case remains open. This paper is a continuation of \cite{HH25a}, and here we give an affirmative answer in dimension five.
\begin{thm}\label{thm:main}
	Let $M$ be an open $5$-manifold with $\Ric\ge0$. If the universal cover $\tilde M$ has Euclidean volume growth, then the fundamental group $\pi_1(M)$ is finitely generated and virtually $\Z^k$ for some $k\in\{0,1,2,3,4\}$.
\end{thm}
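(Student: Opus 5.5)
We would argue by contradiction, following the strategy of \cite{HH25a} in dimension four. Since $\tilde M$ has Euclidean volume growth, every asymptotic cone of $\tilde M$ is a metric cone $C(X)$ over a cross-section $X$ with $\mathrm{diam}(X)\le\pi$. The group $\Gamma=\pi_1(M)$ acts on $\tilde M$ by isometries. Passing to equivariant limits along a sequence $r_i\to\infty$, we obtain limit actions $(r_i^{-1}\tilde M,\tilde p,\Gamma)\GH(C(X),o,G)$, where $G$ is a closed subgroup of $\Isom(C(X))$ fixing nothing in general but preserving the cone structure. The target has two parts. First, show $\Gamma$ is virtually abelian (hence virtually $\Z^k$ once finitely generated, by a Bieberbach-type argument). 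Second, show $\Gamma$ is finitely generated. The bound $k\le4$ then comes from $\dim$ considerations: a torsion-free abelian $\Z^k$ acting cocompactly on a Euclidean factor in the limit forces $k\le n-1=4$, since $M$ is open.

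\textbf{Step 1 (virtual nilpotency/abelianness).} With Euclidean volume growth, the Kapovitch--Wilking type results together with noncollapsing give that $\Gamma$ has no nontrivial ``small'' subgroups escaping to infinity. More concretely, each finitely generated subgroup is virtually nilpotent (Milnor, polynomial growth). Using the cone limits, nilpotent subgroups must act on $C(X)$ through a group whose identity component is a torus times translations. I would show that any finitely generated torsion-free nilpotent subgroup has rank bounded by $4$ and is virtually abelian. The key point is that a non-abelian nilpotent group would require a Heisenberg-type limit action, which is incompatible with a Euclidean factor in a noncollapsed cone of dimension $5$ (cf.\ the six-dimensional quantitative statement announced in the abstract). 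Uniform bounds on the index, depending only on $\theta$ and $n$, follow from a compactness/contradiction argument over pointed noncollapsed limits. These bounds pass to the increasing union of finitely generated subgroups, so $\Gamma$ itself is virtually abelian with a torsion-free abelian subgroup of rank $\le4$.

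\textbf{Step 2 (finite generation).} Suppose $\Gamma$ is not finitely generated. Then there is a sequence of short generators $\gamma_i$ with $|\gamma_i|=d(\tilde p,\gamma_i\tilde p)=r_i\to\infty$, with $\gamma_i\notin\Gamma_i=\langle g:|g|<r_i\rangle$ (Sormani's halfway lemma type setup). By Step 1, $\Gamma$ is a countable virtually abelian group of finite rank, e.g.\ with a subgroup like $\Z[1/2]$ or an infinite torsion part. Rescale by $r_i$. In the limit cone $C(X)$ the groups $\Gamma_i$ converge to a closed subgroup $H$, and the $\gamma_i$ converge to an element $\gamma_\infty$ with $|\gamma_\infty|=1$. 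Halfway-lemma properties show $\gamma_\infty$ is not in $H$ but displaces points in a controlled way. Noncollapsing means the orbit $H\cdot o$ and its $\gamma_\infty$-translate give a splitting or a rotational structure on $C(X)$. Dimension reduction then follows. If $H$ contains translations, then $C(X)=\R^m\times C(Y)$, and we pass to the lower-dimensional cone $C(Y)$ of dimension $\le 5-m$. If $H$ is compact, it acts on the cross-section $X$, which is a $4$-dimensional noncollapsed Ricci limit space (an Alexandrov-like space in a weak sense). We then reduce to the dimension-four result of \cite{HH25a} applied to the quotient or slice, and derive a contradiction by iterating over the scales $r_i$.

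\textbf{Main obstacle.} The hardest step is the case where $H$ has compact identity component acting on the $4$-dimensional cross-section $X$ with no Euclidean splitting. There one must rule out infinitely many ``rotational'' generators accumulating at every scale. I would first try to show that the orbits of the torus part of $H$ on $X$ have codimension at least one, and that $\dim X=4$ forces the rank of such torus actions to be $\le2$. Combined with the abelian structure from Step 1 and a rank count over scales, the rotational part could then change only finitely often, contradicting the existence of infinitely many $r_i$.
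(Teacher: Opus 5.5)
There is a genuine gap, and it lies in Step 2, which is where the difficulty of the theorem sits. The move ``reduce to the dimension-four result of \cite{HH25a} applied to the quotient or slice'' is not available. That result is about open $4$-manifolds with $\Ric\ge0$, i.e.\ about the whole family of rescalings of a manifold. Your $X$ is a compact $4$-dimensional $\mathrm{RCD}(3,4)$ cross-section, and $C(X)/H$ or a slice of it is a single singular limit. Finite generation is not a property of one cone but of how the cones in $\Omega(\tilde M,\Gamma)$ fit together across scales.

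Splitting off translations and passing to $C(Y)$ also discards exactly what matters. In the torsion case the bad orbit turns out to be a reflection pair $\{y,\rho y\}$ on an $\R$-factor, and the contradiction comes from how $\rho$ acts on that factor and on the cross-section simultaneously. The ``rank count over scales'' cannot close the argument either. Non-finite generation produces precisely infinitely many scales at which the orbit structure changes, so you need a stability or gap mechanism, not a count. When $\Gamma$ is torsion there is no rank at all, since everything happens in compact isotropy. Finally, a $4$-dimensional cross-section has no topological classification you could work with.

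What is missing is the following chain of ingredients, none of which appears in your proposal.
\begin{itemize}
\item \emph{Reduction.} Wilking's reduction lets you assume $\pi_1(M)$ is abelian and either torsion-free or torsion. This makes your Step 1 unnecessary for finite generation.
\item \emph{Getting a $3$-dimensional cross-section.} Each relevant cone must split off an $\R$-factor, so that its cross-section is $S^0*Z$ with $Z$ a $3$-dimensional $\mathrm{RCD}(2,3)$ space, hence a spherical $3$-manifold by \cite{BPS24}. In the torsion-free case the splitting comes from a $\Z$-subgroup. In the torsion case it comes from the codimension-$3$ lemma \cite[Lemma 3.2]{HH25a}, which forces $\dim\mathrm{span}_\R(G\cdot y)\le1$; critical scaling then rules out lattice and line orbits.
\item \emph{Isotropy stability.} A critical-scaling argument shows that once all central isotropies are isomorphic, property $(P)$ holds and central orbits are Euclidean. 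Together with Pan's observation, this gives finite generation.
\item \emph{Excluding the bad isotropies.} This needs genuine $3$-manifold topology: $S^1$-fixed points persist under uniform convergence, and effective $T^2$-actions have no fixed points. In the torsion case you also need orientability of noncollapsed limits \cite{BrBrPi24}. It shows that $\rho$ reverses the orientation of $Z$, so a Lefschetz argument makes $I(G)_0$ a circle fixing some $z_0\in Z$. A torsion element whose limit lies in that circle fixes a half-plane, and blowing up at $(0,1,z_0)$ contradicts the codimension-$3$ lemma.
\end{itemize}

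Your Step 1 has a secondary problem. The uniform index bound for a possibly infinitely generated $\Gamma$ ``by compactness/contradiction'' is unproven; the paper's analogous quantitative result relies on Pan's bounds for the index of the center. The ``Heisenberg-type limit'' heuristic is also not the actual mechanism. Once finite generation is known, virtual abelianness is a growth argument. If some cone splits off $\R^4$ it is $\R^5$, so $\tilde M=\R^5$ and Bieberbach gives rank $\le4$. Otherwise every Euclidean factor has dimension $\le3$, so orbits have dimension $\le3$, the growth degree is $<4$ by \cite{Ye24}, and \cite{DHL17} gives virtually $\Z^k$ with $k\le3$.
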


The argument also applies in higher dimensions when every asymptotic cone of $M$ splits off an $\R^{n-5}$-factor.

\begin{thm}\label{thm:main-codim5}
	Let $M$ be an open $n$-manifold with $\Ric\ge0$, where $n\ge5$. If the universal cover $\tilde M$ has Euclidean volume growth and every asymptotic cone of $M$ splits off an $\R^{n-5}$-factor isometrically, then
	\begin{itemize}
		\item [(1)] $\pi_1(M)$ is finitely generated and virtually $\Z^k$ for some $k\in\{0,1,\ldots,5\}$;
		\item [(2)] every asymptotic cone of $M$ has a pole at its reference point;
		\item [(3)] the escape rate of $\pi_1(M)$ is zero.
	\end{itemize}
\end{thm}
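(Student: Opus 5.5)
The plan is to argue by equivariant Gromov--Hausdorff convergence of the pair $(\tilde M,\Gamma)$, $\Gamma=\pi_1(M)$, under blow-downs. The reduction to low effective dimension is the following. Fix $\tilde p$ over $p$ and $r_i\to\infty$, and pass to a subsequence
\[
(r_i^{-1}\tilde M,\tilde p,\Gamma)\GH(C(Y),o,G),\qquad (r_i^{-1}M,p)\GH(C(Y)/G,\bar o).
\]
Euclidean volume growth of $\tilde M$ makes every tangent cone at infinity of $\tilde M$ a metric cone $C(Y)$. By Colding--Naber, the limit group $G$ is a Lie group acting by isometries, and it fixes the vertex $o$ up to the splitting. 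By hypothesis $C(Y)/G$ splits off $\R^{n-5}$. Lifting this line splitting through the orbit projection (using that $\tilde M$ is noncollapsed), $C(Y)=\R^{n-5}\times C(Y')$, with $G$ acting so that the quotient keeps an $\R^{n-5}$ factor. After taking a finite-index part, the relevant action is essentially that of a group on the $5$-dimensional noncollapsed cone $C(Y')$. This reduces everything to the five-dimensional situation, so the five-dimensional machinery of the paper applies with $C(Y')$ in place of the full cone.

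The steps, in order, are these.

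First I prove (2). I show that every asymptotic cone $C(Y)/G$ has a pole at $\bar o$, i.e.\ that $G$-orbits are not ``moving off'' the vertex. Equivalently, the orbit $G\cdot o$ lies in the Euclidean factor, so the quotient is again a cone with vertex $\bar o$. For this I use the classification of isometric Lie group actions on $5$-dimensional noncollapsed Ricci limit cones: the identity component is abelian (a torus times translations) and acts on the low-dimensional cross-section $Y'$. Dimension $\le 5$ forces orbit types strong enough to show that any orbit through a non-vertex point on the Euclidean line would create a non-cone quotient, contradicting the self-similarity of asymptotic cones.

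Second, (3) follows from (2) by the standard Pan argument. If the escape rate $E(M,p)>0$, then some asymptotic cone would have minimal representing loops escaping, which contradicts the pole property. A continuity and critical-rescaling argument picks an intermediate scale to produce the contradiction.

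Third, (1) follows from zero escape rate together with the volume growth. Zero escape rate, by Pan's theorem, gives that $\Gamma$ is virtually abelian on finitely generated subgroups, with rank bounded by the dimension of the Euclidean factor of the orbit. Finite generation needs the extra input that $\tilde M$ has Euclidean volume growth, which rules out the Nabonnand/BNS-type infinitely generated torsion phenomena. For this I would rerun the four-dimensional argument of \cite{HH25a}: short generators are controlled by uniform $\epsilon$-regularity on the noncollapsed limit, so there is a uniform bound on the number of generators in each annulus. The rank bound $k\le5$ comes from $G_0$ acting by translations on at most $\R^5$ within the five-dimensional reduced part. It cannot reach $n$, because the $\R^{n-5}$ factor must survive in the quotient.

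\textbf{Main obstacle.} I expect the main obstacle to be step (2), the pole property. It requires ruling out exotic actions of $G$ on $C(Y')$ in which orbits meet the singular set of $Y'$ (e.g.\ $Y'$ a $4$-dimensional space with orbifold-type singularities). I would first try induction on the dimension of the $G$-orbit through $o$, using volume-cone rigidity to show that a nontrivial orbit of $o$ spans a Euclidean factor.
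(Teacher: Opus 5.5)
There is a genuine gap, and it sits in step (2). Your chain (2)$\Rightarrow$(3)$\Rightarrow$(1) is valid in principle: poles give finite generation by Sormani, and zero escape rate gives virtual abelianness by Pan. So everything rests on (2), and the argument you give for it does not work.

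First, the vertex orbit $G\cdot y$ always lies in the Euclidean factor, so ``the orbit lies in the Euclidean factor'' is not equivalent to having a pole. For a cone $Y=\R\times C(Z)$ with $G\cong\Z$ translating the line, $G\cdot y=\Z a$, and $Y/G\cong S^1\times C(Z)$ has no ray from the base point through $(\bar{a/2},o_Z)$.

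Second, there is no ``self-similarity'' to contradict. Only $\tilde M$ has Euclidean volume growth, so asymptotic cones of $M$ are not a priori metric cones. The set $\Omega(\tilde M,\Gamma)$ is scale-invariant, but a cone with disconnected central orbit ($\Z a$, or $\{y,\rho y\}$ with $\rho$ reflecting the line) rescales to another cone of the same type.

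Third, your classification ``$G_0$ is a torus times translations'' is not available before anything is known about $\pi_1(M)$. The paper obtains abelian limit groups only after Wilking's reduction (Lemma \ref{lem:wilking}).

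The missing ingredient is a mechanism that rules out disconnected central orbits; this is the actual content of the paper. The paper also runs the logic in the opposite order: it proves (1) first and derives (2) from it.

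\begin{itemize}
\item For finite generation, it reduces to an abelian subgroup that is either torsion-free or torsion.
\item In the torsion-free case, it shows every central orbit is Euclidean. The tools are the isotropy criterion (Proposition \ref{prop:isotropy-criteria}) and critical scaling, using that cross-sections are $S^0*Z$ with $Z$ a spherical $3$-manifold, that $S^1$-fixed points persist under uniform convergence, and that effective $T^2$-actions on $3$-manifolds have no fixed points.
\item In the torsion case, orientability of noncollapsed limits shows $\rho$ reverses the orientation of $Z$. A Lefschetz argument (Lemma \ref{lem:centralizers}) then produces $z_0$ fixed by $I(G)_0$, and a blow-up at $(0,1,z_0)$ contradicts Lemma \ref{lem:noncodim3}.
\item Virtual abelianness comes from Kapovitch--Wilking together with the growth and orbit-dimension argument of Theorem \ref{thm:main-codim6}.
\item Only then is (2) proved. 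Take a finite-index normal abelian subgroup $\Gamma=H\oplus T$. Theorem \ref{thm:free-abelian-fg} applied to $\tilde M/H$ gives $H_\infty\cdot y\cong\R^l$, so $Y/H_\infty$ has a pole. The finite groups $\Gamma_\infty/H_\infty$ and $G/\Gamma_\infty$ fix the base points and so preserve the pole. Part (3) then follows from Pan.
\end{itemize}

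Your sketch contains no substitute for any of these steps, so (2), and with it the whole chain, remains unproved. Separately, in your step (1), a uniform bound on the number of generators per annulus would not give finite generation, since there are infinitely many annuli.
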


The pole condition in (2) implies finite generation by \cite{Sor99}, while the vanishing escape rate in (3) implies virtual abelianness by \cite{Pan21}. Our proof establishes (1) first and then derives (2) and (3).

Although we cannot prove Pan-Rong's conjecture in dimension $6$, we obtain a quantitative virtual-abelianness result.

\begin{thm}\label{thm:main-codim6}
	For $n\ge 2,\theta\in(0,1]$, there exists a constant $C(n,\theta)>0$ to the following effect. Let $M$ be an open $n$-manifold with $\Ric\ge0$, $n\ge6$. If the universal cover $\tilde M$ has Euclidean volume growth with the constant $\theta$ in \eqref{eq:v} and every asymptotic cone of $M$ splits off an $\R^{n-6}$-factor isometrically, then $\pi_1(M)$ contains an abelian subgroup of index bounded by $C(n,\theta)$.
\end{thm}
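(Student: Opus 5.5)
We argue by contradiction, using compactness. Suppose that for some $n\ge6$ and $\theta$ there are open $n$-manifolds $(M_i,p_i)$ with $\Ric\ge0$, whose universal covers satisfy \eqref{eq:v} with constant $\theta$, and all of whose asymptotic cones split off $\R^{n-6}$. Suppose further that the minimal index of an abelian subgroup of $\Gamma_i=\pi_1(M_i,p_i)$ tends to infinity.

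The key input is a generalized Margulis lemma for noncollapsed spaces: by Kapovitch--Wilking, together with the Euclidean volume growth of $\tilde M_i$, every subgroup of $\Gamma_i$ generated by loops of length $\le\epsilon(n,\theta)r$ at scale $r$ has a nilpotent subgroup of index bounded by $C(n,\theta)$. Moreover, since $\tilde M_i$ is noncollapsed, the relevant nilpotent groups act with discrete orbits in the limit, and the limit group actions are by isometries on metric cones. The first step is to pass to equivariant asymptotic cones $(C(Y),o,G)$ of $(\tilde M_i,\Gamma_i)$ along suitable scales $r_i\to\infty$. By \cite{CC96}-type volume cone rigidity, $C(Y)$ is a metric cone. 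The quotient $C(Y)/G$ is an asymptotic cone of $M_i$ (or a limit of such), so it splits off $\R^{n-6}$. Hence $G$ acts on $C(Y)=\R^{n-6}\times C(Z)$ with essentially at most a $6$-dimensional "non-split" part. In this part the isotropy $G_o$ acts on a noncollapsed cone of dimension $\le6$, with cross section $Z$ of dimension $\le5$.

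The second step, carried out as in the proof of Theorem \ref{thm:main-codim5}, is to show that the identity component and isotropy structure force virtual abelianness. Orbits of $G$ through $o$ lie in the Euclidean factor up to the compact isotropy. The isotropy group $G_o$ acting freely-in-the-limit on the cross section is constrained by dimension: a compact group acting effectively with large orbits on a $5$-dimensional noncollapsed space has abelian identity component, or is small enough that its finite part has bounded size. I would combine this with a rescaling argument over all scales. If the nilpotent subgroups were not virtually abelian with bounded index, a critical rescaling (choose $r_i$ where the commutator/"nonabelian" generators first become short relative to $r_i$) would produce a limit group with a nonabelian nilpotent structure acting on $C(Y)$. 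This contradicts the fact that translations in the $\R^{n-6}$-factor commute and that the isotropy part is abelian (torus) up to finite index bounded via volume.

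The main obstacle is the dimension $6$ case of the isotropy analysis. In dimension $5$ the cross section has dimension $\le4$, which presumably yields pole/escape-rate control. In dimension $6$ the $5$-dimensional cross section admits actions (e.g.\ $SO(3)$-type on $S^5$ quotients) that could prevent zero escape rate, so finite generation cannot be obtained. I would aim only to show that the limit isotropy contains an abelian subgroup of index bounded in terms of $(n,\theta)$. For this I would bound the finite isotropy via a volume comparison on the cross section: an orbifold with $\vol\ge\theta\cdot\vol(S^{5})$ quotient has bounded orbit size. Then I would use Jordan-type bounds for finite subgroups of $O(6)$ to get a bounded-index abelian subgroup. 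Passing this back to $\Gamma_i$ via the equivariant convergence and the bounded-index nilpotent subgroup gives the contradiction.
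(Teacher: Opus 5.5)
Your proposal has a genuine gap at its core: it gives no mechanism that rules out a non-virtually-abelian nilpotent subgroup. Nothing forces the nonabelian structure of $\Gamma$ to persist in an equivariant limit. At the scale where generators become visible, commutators may have negligible displacement.

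The facts you want to contradict are also not the relevant ones. The limit group acts trivially on the $\R^{n-6}$-factor, because that factor is lifted from the quotient, so commutation of translations there says nothing. The limit isotropy is not the issue either: $G$ is nilpotent, so $I(G)_0$ is automatically a torus, and the $SO(3)$-type actions you worry about cannot occur. What must be excluded is a Heisenberg-type group whose noncommutativity lives in the escaping, translational part.

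For this the paper uses growth rather than isotropy. Write $Y=\R^{n-6}\times\R^b\times C(Z)$ with $G\cdot y$ in the $\R^b$-slice. The case $b\ge5$ forces $M$ to be flat by \cite{CC97} and \cite{Col97}, so $b\le4$. If every central orbit has dimension $\le3$, Ye's counting estimate \cite{Ye24} gives $\#\Gamma(R)\le CR^{3.1}$. Then $\Gamma$ has growth degree $<4$ and is virtually $\Z^k$ with $k\le3$ by Proposition \ref{prop:poly3}; this is exactly what excludes Heisenberg-type groups, which have growth degree $4$. Otherwise some orbit is $\R^4$, hence every orbit is $\R^4$, and $\Gamma$ is virtually $\Z^4$ by \cite{HH25b}.

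The quantitative step also fails as written. The volume hypothesis concerns only $\tilde M$, so $M$ and the quotients $C(Z)/G_o$ may collapse. There is therefore no lower volume bound on the quotient, and no volume bound on the order of the finite isotropy. Moreover, $G_o$ lies in $\Isom(Z)$, not in $\mathrm{O}(6)$, so Jordan's theorem for $\mathrm{O}(6)$ does not apply.

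The compactness framework is problematic too. At critical scales for different $M_i$, the rescaled limits need not be metric cones or split off $\R^{n-6}$: the splitting hypothesis concerns only asymptotic cones of each fixed $M_i$ and is not uniform in scale. In addition, $\pi_1(M_i)$ need not be finitely generated, so you must first reduce to finitely generated subgroups via Lemma \ref{lem:virtuallyabeliangroups} before applying Kapovitch--Wilking.

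The paper avoids compactness altogether. For each finitely generated $H<\pi_1(M)$ it takes the nilpotent subgroup $\Gamma$ of index $\le C(n)$ from \cite{KW11}. It then proves qualitatively, as above, that $\Gamma$ is virtually abelian, so $[\Gamma,\Gamma]$ is finite. Finally it obtains the uniform bound $[\Gamma:Z(\Gamma)]\le C_1(n,\theta)$ from \cite[Lemma~4.5]{Pan24} and \cite[Theorem~5.1]{Pan25}.
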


\begin{rem}
We conjecture that this constant should in fact be universal, depending only on $n$ and not on $\theta$.
\end{rem}

\subsection*{Outline of the proof}

We now describe the proof of the finite generation part of Theorem \ref{thm:main}. This paper adopts an idea similar to that of \cite{HH25a}. Let $\Omega:=\Omega(\tilde M,\pi_1(M))$ denote the collection of equivariant asymptotic cones of $(\tilde M,\pi_1(M))$. By Pan's observation (Lemma \ref{lem:short-gen-cone}), it suffices to rule out the case where the central orbit $G\cdot y$ is disconnected. The benefits of the low dimension and of the Euclidean volume growth of the universal cover enable us to restrict the equivariant asymptotic cones that can occur to very few possibilities. A careful analysis of the isotropy of the limit groups acting on the cross-section of an asymptotic cone then rules out the bad behaviours of $G\cdot y$.

More specifically, suppose that $\pi_1(M)$ is not finitely generated. By Wilking's reduction (Lemma \ref{lem:wilking}), after passing to a cover of $M$, we may assume that $\pi_1(M)$ is abelian. For $(Y,y,G)\in\Omega$, we call $I(G):=G_y$ the central isotropy group. We first prove the following central isotropy criterion (Proposition \ref{prop:isotropy-criteria}): if either some cone in $\Omega$ has finite central isotropy, or the dimension of the central isotropy is constant over $\Omega$, then all central isotropy groups arising in $\Omega$ are isomorphic, and consequently $\pi_1(M)$ is finitely generated. This is a refined version of \cite[Lemma 4.1]{HH25a}, obtained via the critical scaling argument developed by Pan \cite{Pan19}. Then we split the discussion into two cases, according as $\pi_1(M)$ is torsion-free or torsion. 

In the torsion-free case, every asymptotic cone is isometric to $\R\times C(Z)$ for some $3$-dimensional $\mathrm{RCD}(2,3)$-space $Z$, and the central isotropy acts effectively on $Z$. By \cite{BPS24}, such $Z$ must be homeomorphic to a spherical $3$-manifold. Using the central isotropy criterion together with the abelianness assumption, we conclude that $\dim I(G)\in\{1,2\}$. It remains to show that $\dim I(G)=1$ is the only possibility; this is achieved by examining effective $T^k$-actions ($k\in\{1,2\}$) on $3$-dimensional spherical manifolds, and then applying a critical-scaling argument.

In the torsion case, a key tool is Lemma \ref{lem:noncodim3} (\cite[Lemma 3.2]{HH25a}), which roughly says the following: if a noncollapsed equivariant convergence involves finite normal covering groups whose orbits through the reference point shrink to $0$, and if the limit is a metric cone splitting off an $\R^{n-3}$-factor, then these groups are trivial for all sufficiently large indices. Under the contrary assumption, it is not difficult to see that there exists $(Y,y,G)\in\Omega$ such that $Y\cong\R\times C(Z)$, where $Z$ is a $3$-dimensional spherical manifold and $I(G)$ fixes the $\R$-factor and acts effectively on $Z$. The remaining key idea is then to find $z_0\in Z$ and a nontrivial $\gamma_\infty\in I(G)$ such that (i) $\gamma_\infty$ fixes $z_0$, and hence fixes a half-plane of $Y$; (ii) for some $\gamma\in\pi_1(M)$ and some sequence $r_i\to0$, $(r_i\tilde M,\tilde p,\spa\gamma)\GH(Y,y,\spa{\gamma_\infty})$. Once such elements are found, blow up the sequence slowly so that it converges to the tangent cone $T_{(0,1,z_0)}Y$ of $Y$ at $(0,1,z_0)$; Lemma \ref{lem:noncodim3} then yields a contradiction. To find such $z_0$ and $\gamma_\infty$, we analyze the action of the identity component $I(G)_0$ on $Z$, where the orientability of noncollapsed Ricci limit spaces plays an indispensable role \cite{BrBrPi24}.

\subsection*{AI disclosure}
This work used AI assistance. The main approach is based on \cite{HH25a}, and several important steps were resolved with the help of AI. The author wrote the initial draft of the entire paper and subsequently used AI to improve the wording. The author is responsible for the correctness of all content.

\section{Preliminaries}

\subsection{Conventions and basic lemmas}

In this article, for an open manifold $N$ with $\Ric\ge0$ and a closed subgroup $\Gamma<\Isom(N)$, we denote by $\Omega(N)$ and $\Omega(N,\Gamma)$ the collections of the asymptotic cones and the equivariant asymptotic cones of $N$, respectively. We always write $(\tilde N,\tilde x)$ for the universal cover of $(N,x)$, where $x$ is the projection of $\tilde x$. If $N$ has Euclidean volume growth, then for every $(Y,y,G)\in\Omega(N,\Gamma)$, $Y$ is a metric cone with vertex $y$ \cite{CC96} and $G$ is a Lie group \cite{CN12}. Write the maximal Euclidean decomposition of $(Y,y)$ as
$$(Y,y)\cong(\R^k\times C(Z),(0^k,o_Z)),$$
where $C(Z)$ is the metric cone over a length metric space $Z$ with $\diam Z<\pi$. Note that $\Isom(Y)\cong\Isom(\R^k)\times\Isom(Z)$. The orbit $G\cdot y$ is contained in $\R^k\times\{o_Z\}$, and we write $\mathrm{span}_\R(G\cdot y)$ for the linear subspace spanned by $G\cdot y$ in $\R^k\times\{o_Z\}$.

In what follows, we list some standard lemmas, stated in a form convenient for our use.

\begin{lem}\label{lem:freesplits}
	Let $N$ be an open manifold with $\Ric\ge0$ and Euclidean volume growth. If $\Isom(N)$ contains a closed subgroup $\Gamma\cong\Z^k$, then for every $(Y,y,G)\in\Omega(N,\Gamma)$, $G$ contains a closed $\R^k$-subgroup and $Y$ splits off an $\R^k$-factor isometrically.
\end{lem}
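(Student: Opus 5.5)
The plan is to show first that $G$ contains a closed subgroup isomorphic to $\R^k$ acting by translations on a Euclidean factor of $Y$, and then to split off that factor. Fix $(Y,y,G)\in\Omega(N,\Gamma)$ realized by $(r_i^{-1}N,p,\Gamma)\GH(Y,y,G)$ with $r_i\to\infty$. By \cite{CC96}, $Y$ is a metric cone with vertex $y$, and $G$ is a closed Lie subgroup of $\Isom(Y)$ by \cite{CN12}.

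The first step is to find a large orbit. Since $\Gamma\cong\Z^k$ acts freely and properly, the orbit $\Gamma\cdot p$ is a discrete quasi-lattice. The key estimate is a polynomial growth bound: the number of $\gamma\in\Gamma$ with $d(p,\gamma p)\le R$ should be comparable to $R^k$, with constants independent of $R$ for large $R$. For the lower bound we use that $\Gamma$ is finitely generated. Then $d(p,\gamma p)\le C|\gamma|_{\mathrm{word}}$, so at least $cR^k$ elements of $\Gamma$ have displacement at most $R$. The upper bound is not needed.

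Next we pass to the limit. The rescaled orbits $r_i^{-1}(\Gamma\cdot p)\cap B_R$ have $\epsilon$-separated subsets of cardinality roughly $(R/\epsilon)^k$ uniformly in $i$. Here the limit group structure matters, via the Cheeger--Colding/Fukaya--Yamaguchi equivariant convergence theory. It gives that $G\cdot y$ has Hausdorff dimension at least $k$ (a packing argument), and that $G$ is abelian, being a limit of abelian groups. Since $Y$ is a cone, the orbit $G\cdot y$ lies in the Euclidean factor $\R^m\times\{o_Z\}$ of the maximal splitting $Y\cong\R^m\times C(Z)$. Elements of $G$ act as $(A,\phi)\in\Isom(\R^m)\times\Isom(Z)$.

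The main step is to extract a closed $\R^k$-subgroup. Consider the projection of $G$ to $\Isom(\R^m)$, and its image under the rotational part $\Isom(\R^m)\to O(m)$. Pass to the identity component $G_0$, or to the closure of the subgroup generated by the limits of the generators. The translational parts of limits of the $\Z^k$ generators give $k$ vectors. I expect the core difficulty to be showing that the $\R$-span of these translations has dimension $k$, so that in particular no collapse of rank occurs. For this I would use the volume/packing count above: if the translation span had dimension less than $k$, the orbit $G\cdot y$ would have growth at most $R^{k-1}$ modulo a compact part, contradicting the lower bound $cR^k$. The compact part comes from the compact closure of the rotational parts, which is abelian and hence a torus.

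Given the translation span $V$ of dimension at least $k$, we take $H\le G$ to be the closed subgroup acting by translations along $V$ together with compact rotations. Its noncompact part is isomorphic to $\R^{\dim V}$, and we choose a $\R^k$ inside. The splitting of $Y$ along $V$ is automatic, because $V\subset\R^m$ and $\R^m$ is already a Euclidean factor. Hence $Y$ splits off $\R^k$ isometrically. Alternatively, the splitting also follows directly: the lines traced by the one-parameter subgroups through $y$ are lines in $Y$, and the Cheeger--Gromoll splitting theorem applies to the limit \cite{CC96}.
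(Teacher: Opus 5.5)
\textbf{There is a genuine gap: your argument does not prove that $G$ contains a closed $\R^k$-subgroup.} The paper does not prove this step either; it quotes it from Pan--Ye \cite[Corollary 3.2]{PY24}. It then observes that this $\R^k$ acts freely, since isotropy groups are compact. So $\R^k\cdot y$ is a topological $\R^k$ lying in the Euclidean factor $\R^m\times\{o_Z\}$, which forces $m\ge k$. Your packing sketch does not replace the cited result, for two reasons.

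First, $\#\Gamma(R)\ge cR^k$ does not give $\epsilon$-separated subsets of $r_i^{-1}(\Gamma\cdot p)\cap B_R$ of size about $(R/\epsilon)^k$. A maximal $\epsilon r_i$-separated subset of $\Gamma(Rr_i)\cdot p$ is only guaranteed cardinality at least $\#\Gamma(Rr_i)/\#\Gamma(\epsilon r_i)$. So you need exactly the upper bound $\#\Gamma(s)\lesssim s^k$ that you declared unnecessary. The hypotheses only give $\#\Gamma(s)\le Cs^n$, from disjoint small balls around orbit points plus Bishop--Gromov.

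Second, the ``translational parts of limits of the generators'' are all zero. Since $d(p,\gamma_jp)/r_i\to0$, each fixed generator converges into the isotropy group $G_y$. Nontrivial translations come only from elements such as $\gamma^{m_i}$ with $m_i\to\infty$ and displacement comparable to $r_i$. Bounding the rank of what these generate is the whole content of the lemma.

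Moreover, even a correct bound $\dim(G\cdot y)\ge k$ or $\dim\mathrm{span}_\R(G\cdot y)\ge k$ would not give a closed $\R^k$-subgroup. Since $G$ is an abelian Lie group, $G_0\cong\R^a\times T^b$, and what you need is $a\ge k$. Torus directions and non-standard orbits inflate both the orbit dimension and its span. For example, let $G=\R\times S^1$ act on $\R^3$ by translations along the $x_3$-axis and rotations about a line parallel to that axis not passing through $y$. This $G$ is abelian, and its orbit through $y$ is a $2$-dimensional cylinder with $\mathrm{span}_\R(G\cdot y)=\R^3$. Yet $G$ contains no closed $\R^2$. Remark~\ref{rem:orbit-not-standard} records that only the converse implication holds.

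For the same reason, the subgroup ``acting by translations along $V$ together with compact rotations'' need not have noncompact part $\R^{\dim V}$. Your contradiction ``$G\cdot y$ grows like $R^{k-1}$ versus $cR^k$'' is also unsupported. It compares the large-scale shape of one limit orbit with orbit counting in $N$ along a fixed sequence, and that comparison again needs the missing multiplicity control.

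What does work is your final step: a free $\R^k$-orbit inside $\R^m$, or a lower bound on $\dim\mathrm{span}_\R(G\cdot y)$, gives $m\ge k$. Drop the alternative via lines traced by one-parameter subgroups, since such orbits can be helices rather than lines.
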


\begin{proof}
	By \cite[Corollary 3.2]{PY24}, $G$ contains a closed $\R^k$-subgroup. Every isotropy group of $G$ is compact, so this $\R^k$-subgroup acts freely. In particular, the orbit $\R^k\cdot y$ is homeomorphic to $\R^k$, and the conclusion follows from the discussion preceding the lemma.
\end{proof}

\begin{rem}\label{rem:orbit-not-standard}
	In general, the orbit of the $\R^k$-subgroup at $y$ need not be isometric to the standard metric $\R^k$. Thus $\dim\mathrm{span}_\R(\R^k\cdot y)\ge k$, and strict inequality may occur.
\end{rem}

\begin{lem}\label{lem:wilking}
	If $N$ is open with $\Ric\ge0$ and $\pi_1(N)$ is not finitely generated, then $\pi_1(N)$ contains an abelian subgroup that is not finitely generated. Moreover, such a subgroup may be chosen to be either torsion or torsion-free.
\end{lem}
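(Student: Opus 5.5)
This is Wilking's reduction, and I would prove it by combining Wilking's structure theorem with Milnor's growth argument and some elementary group theory. \textbf{Step 1} is to quote Wilking's theorem: for an open manifold $N$ with $\Ric\ge0$, if $\pi_1(N)$ is not finitely generated, then it contains a subgroup $H$ that is abelian and not finitely generated. Wilking obtains this by passing to a finitely generated subgroup, which is virtually nilpotent by Milnor and Gromov. He then uses the nilpotent case, where every finitely generated subgroup of the fundamental group is virtually nilpotent with controlled structure, to produce a non-finitely-generated abelian subgroup. I would take this step essentially as a citation, since it is the standard content of the reduction.

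\textbf{Step 2} gives the torsion or torsion-free refinement. Let $H$ be abelian and not finitely generated, and let $T\le H$ be its torsion subgroup, which is a subgroup because $H$ is abelian.

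\begin{itemize}
\item If $T$ is not finitely generated, we take $T$; it is torsion.
\item Otherwise $T$ is a finitely generated torsion abelian group, hence finite.
\end{itemize}

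In the second case, $H/T$ is torsion-free and not finitely generated, since an extension of a finite group by a finitely generated group is finitely generated. We need an honest torsion-free subgroup of $H$, not merely a quotient. Let $m=|T|$ and take $mH=\{mh: h\in H\}$. This subgroup is torsion-free: if $k(mh)=0$ with $k\neq 0$, then $h\in T$, so $mh=0$.

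\textbf{Step 3} shows that $mH$ is not finitely generated. Suppose it were. The map $H\to mH$, $h\mapsto mh$, is surjective with kernel $H[m]\subseteq T$, which is finite. So $H$ would be an extension of a finite group by a finitely generated group, hence finitely generated, a contradiction.

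The main obstacle is Step 1, which relies on Wilking's structural argument and is really a citation rather than something reproved here. Steps 2 and 3 are elementary abelian group theory.
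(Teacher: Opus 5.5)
Your proof is correct and follows essentially the same route as the paper: cite Wilking for the first assertion, then split on whether the torsion subgroup $T$ is finitely generated. In the finite case you pass to the image of the power map $h\mapsto mh$ with $m=|T|$, which is torsion-free and not finitely generated because its kernel is contained in the finite group $T$ (the paper notes the kernel is exactly $T$, so the image is $\cong H/T$).
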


\begin{proof}
	The first statement is Wilking's reduction \cite{Wil00}. For the second, let $\Gamma$ be an abelian subgroup of $\pi_1(N)$ that is not finitely generated, and let $T$ be its torsion subgroup. If $T$ is not finitely generated, we are done. Thus assume that $T$ is finitely generated, in which case $T$ is finite. Put $k:=\#T$, and define $\iota\colon\Gamma\to\Gamma$ by $\iota(\gamma)=\gamma^k$. Since $\Gamma$ is abelian, $\iota$ is a homomorphism with kernel $T$. Hence $H:=\iota(\Gamma)\cong\Gamma/T$ is torsion-free; moreover, as $T$ is finite and $\Gamma$ is not finitely generated, $H$ is not finitely generated.
\end{proof}

The following observation of Pan \cite{Pan20} provides a sufficient condition for the fundamental group to be finitely generated.
\begin{lem}\label{lem:short-gen-cone}
	If $N$ is an open manifold with $\Ric\ge0$ and $\pi_1(N)$ is not finitely generated, then there exists $(Y,y,G)\in\Omega(\tilde N,\pi_1(N))$ whose orbit $G\cdot y$ is disconnected.
\end{lem}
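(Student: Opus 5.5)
We prove the contrapositive: if every $(Y,y,G)\in\Omega(\tilde N,\pi_1(N))$ has connected orbit $G\cdot y$, then $\pi_1(N)$ is finitely generated. The tool is the standard short generating set of Gromov. Fix $\tilde p\in\tilde N$ and write $\Gamma=\pi_1(N)$. Choose $\gamma_1\in\Gamma$ with $|\gamma_1|:=d(\tilde p,\gamma_1\tilde p)$ minimal among nontrivial elements. Inductively, choose $\gamma_j$ with $|\gamma_j|$ minimal among elements of $\Gamma\setminus\langle\gamma_1,\dots,\gamma_{j-1}\rangle$. Since $\Gamma$ acts properly discontinuously, each such minimum is attained. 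If $\Gamma$ is not finitely generated, this process never terminates, so we get an infinite sequence with $r_j:=|\gamma_j|\to\infty$.

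The key property of the sequence is the following gap. Put $\Gamma_j:=\langle\gamma_1,\dots,\gamma_{j-1}\rangle$. By minimality, every element $g\in\Gamma$ with $|g|<r_j$ lies in $\Gamma_j$, while $\gamma_j\notin\Gamma_j$ and $|\gamma_j|=r_j$.

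Now rescale. Pass to a subsequence so that
\[(r_j^{-1}\tilde N,\tilde p,\Gamma)\GH(Y,y,G)\in\Omega(\tilde N,\Gamma),\]
and so that $\gamma_j\to\gamma_\infty\in G$, which satisfies $d(y,\gamma_\infty y)=1$. We aim to show that $G\cdot y$ is disconnected. By hypothesis it is connected, so the connected component argument below will produce the contradiction.

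Fix a small $\epsilon>0$. Let $H_j$ be the subgroup generated by the elements of $\Gamma$ of displacement less than $(1-\epsilon)r_j$. Then $H_j\subset\Gamma_j$, since every such element has displacement less than $r_j$. After a further subsequence, $H_j$ converges to a closed subgroup $H\le G$, and $H$ contains every $g\in G$ with $d(y,gy)<1-\epsilon$.

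Since $G\cdot y$ is connected and $G$ is a Lie group, the subgroup generated by any neighborhood of the identity (together with the isotropy) acts transitively on $G\cdot y$. Consequently some element $h\in H$, built from a product of short elements, satisfies $h y$ arbitrarily close to $\gamma_\infty y$.

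Transfer this back to $\tilde N$. Approximating $h$ by a word in $H_j$, we obtain $h_j\in\Gamma_j$ with $d(h_j\tilde p,\gamma_j\tilde p)<\delta r_j$ for a small $\delta$. Then
\[|h_j^{-1}\gamma_j|<\delta r_j<r_j,\]
so $h_j^{-1}\gamma_j\in\Gamma_j$ by the gap property. Hence $\gamma_j=h_j\cdot(h_j^{-1}\gamma_j)\in\Gamma_j$, contradicting the choice of $\gamma_j$.

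The main obstacle is justifying the connectedness step uniformly. Connectedness of $G\cdot y$ does not directly say that short elements of $G$ move $y$ to near $\gamma_\infty y$. One first needs the following: for a connected orbit of a Lie group action, every orbit point is reachable by finite chains of orbit points with small consecutive gaps, realized by elements of small displacement. The subtlety is that these chains have a length $L$ depending on $\epsilon$. That is harmless, because $L$ is fixed before taking $j\to\infty$ and the equivariant Gromov--Hausdorff approximations are uniform on bounded sets. With the choice of $\epsilon$ and $\delta$ in terms of $L$ handled carefully, the argument follows Pan's in \cite{Pan20}.
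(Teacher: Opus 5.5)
Your argument is correct and is essentially Pan's short-generator argument from \cite{Pan20}; the paper only cites this result and gives no proof of its own. The ``main obstacle'' you flag is not really one: your limit group $H$ is a closed subgroup containing the open set $\{g\in G\mid d(y,gy)<1-\epsilon\}\supset G_y$, so $Hy$ is open and closed in $G\cdot y$. Connectedness then gives $\gamma_\infty y\in Hy$ directly, with no control of chain lengths needed, and you could use $\Gamma_j$ itself in place of $H_j$.
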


\subsection{Critical scaling principle}

The critical scaling argument, developed by Pan \cite{Pan19,Pan25}, aims to show that certain types of equivariant asymptotic cones cannot coexist. If both types were present, one could choose a critical scale at which the behaviour changes; rescaling the limit at that scale would then contradict the way the scale was chosen. The argument uses more than connectedness. For instance, under the relevant compactness assumptions, the connected family consisting of $(\R,0)$, $\bigl([a,+\infty),0\bigr)$ for $a<0$, and $\bigl([0,+\infty),0\bigr)$ cannot be the full family of asymptotic cones of a fixed space. In this subsection, we abstract this argument into a principle that is more convenient to use.

A pointed equivariant proper metric space is the equivariant isometry class
of a triple $(X,x,G)$, where $(X,x)$ is a pointed proper metric space and
$G<\Isom(X)$ is closed. For $P=(X,x,G)$ and $a>0$, let
$aP:=(aX,x,G)$. For a nonempty collection $\mathcal A$ of pointed
equivariant proper metric spaces, write
$$
d_{GH}(P,\mathcal A)
:=\inf_{A\in\mathcal A}d_{GH}(P,A).
$$
All closures below are taken in $\Omega(N,\Gamma)$.

\begin{prop}[Critical-scaling principle]
	\label{prop:critical-scaling}
	Let $(N,p)$ be an open manifold with $\Ric\geq0$, and let
	$\Gamma<\Isom(N)$ be closed. Let $\mathcal U$ be a nonempty open subset
	of $\Omega(N,\Gamma)$. Suppose that there are
	$\sigma\in\{1,-1\}$ and $R\in(1,+\infty]$ such that, for every
	$Q\in\overline{\mathcal U}$, there is a $c\in(1,R]$ satisfying
	$c^{-\sigma}Q\in\mathcal U$. Then, for every
	$B\in\Omega(N,\Gamma)$, there exists $a\in[1,R)$ such that $a^\sigma B\in\overline{\mathcal U}$.
\end{prop}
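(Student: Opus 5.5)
The idea is the usual critical-scaling argument of Pan: pick the largest scale below the scale of $B$ at which the rescaled space still looks like $\overline{\mathcal U}$. The hypothesis then shows that this scale cannot be much smaller than that of $B$. I treat $\sigma=1$; the case $\sigma=-1$ is symmetric, with sup and inf exchanged and $[s_i,t_i]$ in place of $[t_i,s_i]$.

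I use three standard facts. $\Omega(N,\Gamma)$ is compact. It is closed under rescaling $Q\mapsto aQ$, $a>0$. The map $(a,Q)\mapsto aQ$ is continuous. I also use that for a fixed $i$ the map $r\mapsto (r^{-1}N,p,\Gamma)$ is continuous in the pointed equivariant Gromov--Hausdorff distance.

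\emph{Setup.} Fix $B\in\Omega(N,\Gamma)$ with $B=\lim (s_i^{-1}N,p,\Gamma)$, where $s_i\to\infty$. Pick $A\in\mathcal U$ with $A=\lim(t_i^{-1}N,p,\Gamma)$. Passing to subsequences, I may assume $t_i\to\infty$ and $t_i\le s_i$.

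\emph{Uniform margin.} First I extract a uniform version of the hypothesis. For each $Q\in\overline{\mathcal U}$ choose $c_Q\in(1,R]$ with $c_Q^{-1}Q\in\mathcal U$. Since $\mathcal U$ is open in $\Omega$ and rescaling is continuous, there is a neighbourhood $V_Q$ of $Q$ in the ambient space of pointed equivariant spaces such that $c_Q^{-1}V_Q$ lies in a neighbourhood $W_Q$ with $W_Q\cap\Omega\subset\mathcal U$. By compactness of $\overline{\mathcal U}$, finitely many $V_Q$ cover an $\eta$-neighbourhood of $\overline{\mathcal U}$.

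\emph{Critical scale.} Fix $\epsilon\in(0,\eta)$ and set
$$\rho_i:=\sup\{r\in[t_i,s_i]:\ d_{GH}((r^{-1}N,p,\Gamma),\overline{\mathcal U})\le\epsilon\}.$$
For large $i$ the set is nonempty, since it contains $t_i$. By continuity in $r$, the supremum is attained.

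Pass to a limit $Q_\epsilon=\lim(\rho_i^{-1}N,p,\Gamma)\in\Omega$. Then $d_{GH}(Q_\epsilon,\overline{\mathcal U})\le\epsilon$, so $Q_\epsilon$ lies in some $V_Q$, and with $c=c_Q$ we get $c^{-1}Q_\epsilon\in\mathcal U$. Since $((c\rho_i)^{-1}N,p,\Gamma)\to c^{-1}Q_\epsilon\in\mathcal U$, for large $i$ we have $d_{GH}(((c\rho_i)^{-1}N,p,\Gamma),\overline{\mathcal U})<\epsilon$. Maximality of $\rho_i$ then forces $c\rho_i>s_i$. Hence $a_i:=s_i/\rho_i\in[1,c)\subset[1,R)$.

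Passing to a subsequence, $a_i\to a_\epsilon\in[1,c]$, and
$$a_\epsilon B=\lim(a_is_i^{-1}N,p,\Gamma)=\lim(\rho_i^{-1}N,p,\Gamma)=Q_\epsilon.$$

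\emph{Letting $\epsilon\to0$.} Take $\epsilon_j\to0$. By compactness, $a_{\epsilon_j}\to a\in[1,R]$ and $Q_{\epsilon_j}\to aB\in\overline{\mathcal U}$.

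\emph{Main obstacle.} The delicate point is the endpoint: we need $a<R$ strictly. This only matters when $R<\infty$ and $a=R$. Then $c_Q$ is forced to be $R$ for the limit $Q=RB$, and $R^{-1}Q=B$, so $B=c_Q^{-1}Q\in\mathcal U$ and $a=1$ already works. To make this rigorous, I would choose the finite subcover and the $c_Q$ so that $a_{\epsilon_j}$ stays below the corresponding $c$. In any limiting case with $a=c_Q$, I would read off $B=c_Q^{-1}(aB)\in\mathcal U\subset\overline{\mathcal U}$ and replace $a$ by $1$. Beyond this endpoint issue, the remaining care is purely bookkeeping: the condition defining $\rho_i$ is closed, which makes the sup attained, while openness of $\mathcal U$ is what lets the scale $c\rho_i$ violate maximality.
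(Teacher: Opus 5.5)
Your argument is correct in substance and uses the same critical-scaling idea as the paper, but the technical device is different. The paper argues by contradiction and uses the strict comparison $d_{GH}(\cdot,\mathcal U)<d_{GH}(\cdot,\mathcal C)$, with $\mathcal C=\Omega(N,\Gamma)\setminus\mathcal U$. This places every limit of near-critical scales in $\overline{\mathcal U}$ exactly, so the hypothesis can be applied directly at the limit $Q$. As a result the paper needs no finite-subcover margin and no final $\epsilon\to0$ step, and the contradiction assumption itself supplies $\liminf\lambda_i\ge R$.

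Your version is a direct construction. Your closed condition $d_{GH}(\cdot,\overline{\mathcal U})\le\epsilon$ only places $Q_\epsilon$ \emph{near} $\overline{\mathcal U}$. That is why you need the compactness-based uniform return factor and then the limit $\epsilon_j\to0$. In exchange you produce $a$ explicitly instead of by contradiction.

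The one step that is not right as written is the endpoint. Your $c_Q$'s are attached to the covering centres $Q_k$, not to $RB$. The hypothesis applied to $RB$ gives some $c\in(1,R]$, and nothing forces this $c$ to equal $R$; so the conclusion $B\in\mathcal U$ does not follow from what you wrote.

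The repair takes one line, and the paper does it up front. By openness of $\mathcal U$ in $\Omega(N,\Gamma)$ and continuity of rescaling, each return factor may be taken in $(1,R)$: if $R^{-1}Q\in\mathcal U$, then $c'^{-1}Q\in\mathcal U$ for $c'<R$ close to $R$. With your finite subcover this gives $a\le\max_k c_{Q_k}<R$ directly. Alternatively, if $a=R$, apply the hypothesis to $RB\in\overline{\mathcal U}$ itself. Some $c\in(1,R]$ then gives $(R/c)B\in\mathcal U$, and $R/c\in[1,R)$.

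The same openness argument reduces the case $c=+\infty$, which the paper's remark permits when $R=+\infty$, to a finite return factor. Your finite-subcover step needs this, since rescaling by $c_Q^{-1}=0$ is meaningless.
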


\begin{rem}
	When $R=+\infty$, we allow $c=+\infty$ in the proposition above. If
	$\sigma=1$, the condition $c^{-1}Q\in\mathcal U$ for $c=+\infty$ means
	that some equivariant asymptotic cone of $Q$ belongs to $\mathcal U$.
	If $\sigma=-1$, the condition $cQ\in\mathcal U$ for $c=+\infty$ means
	that some equivariant tangent cone of $Q$ at the reference point belongs
	to $\mathcal U$.
\end{rem}
\begin{proof}
	Suppose, to the contrary, that there is a $B\in\Omega(N,\Gamma)$ such
	that $a^\sigma B\notin\overline{\mathcal U}$ for every $a\in[1,R)$.
	Put $\mathcal C:=\Omega(N,\Gamma)\setminus\mathcal U$. Since
	$B\notin\overline{\mathcal U}$, the set $\mathcal C$ is nonempty.

	We first note that if $P_i\GH Q\in\Omega(N,\Gamma)$ and $d_{GH}(P_i,\mathcal U)
	<d_{GH}(P_i,\mathcal C)$, then $Q\in\overline{\mathcal U}$. Indeed, passing to the limit gives
	$d_{GH}(Q,\mathcal U)\leq d_{GH}(Q,\mathcal C)$,
	whereas $Q\notin\overline{\mathcal U}$ would imply
	$d_{GH}(Q,\mathcal U)>0=d_{GH}(Q,\mathcal C)$.

	We may always choose the return factor $c$ in $(1,R)$. If
	$R<+\infty$ and $c=R$, this follows from the continuity of rescaling
	and the openness of $\mathcal U$. If $R=+\infty$ and $c=+\infty$, it
	follows from the definition of an equivariant tangent or asymptotic
	cone and the openness of $\mathcal U$.

	Fix $A\in\mathcal U$.

	\textbf{Case 1:} $\sigma=1$.
	Interlacing the defining scales of $A$ and $B$, choose $r_i,s_i\to0$
	such that
	$$
	(s_iN,p,\Gamma)\GH B,\qquad
	(r_iN,p,\Gamma)\GH A,\qquad
	\frac{r_i}{s_i}\to\infty.
	$$
	Put $N_i=s_iN$ and $\Lambda_i:=r_i/s_i$. Then
	$$
	(N_i,p,\Gamma)\GH B,\qquad
	(\Lambda_iN_i,p,\Gamma)\GH A,\qquad
	\Lambda_i\to\infty.
	$$
	Set
	$$
	E_i:=\left\{t\in[1,\Lambda_i]\ \middle|\
	d_{GH}((tN_i,p,\Gamma),\mathcal U)
	<d_{GH}((tN_i,p,\Gamma),\mathcal C)\right\}.
	$$
	Since $(\Lambda_iN_i,p,\Gamma)\GH A\in\mathcal U$, the set $E_i$ is
	nonempty for large $i$. Choose $\lambda_i\in E_i$ such that
	$\inf E_i\leq\lambda_i<e^{1/i}\inf E_i$.

	We claim that $\liminf\lambda_i\geq R$, where this means
	$\lambda_i\to+\infty$ if $R=+\infty$. Otherwise, after passing to a
	subsequence, $\lambda_i\to a\in[1,R)$, and hence
	$(\lambda_iN_i,p,\Gamma)\GH aB$. Since $\lambda_i\in E_i$, the preceding
	observation gives $aB\in\overline{\mathcal U}$, a contradiction.

	After passing to a subsequence,
	$(\lambda_iN_i,p,\Gamma)\GH Q$. Since
	$\lambda_is_i\leq r_i\to0$ and $\lambda_i\in E_i$, we have
	$Q\in\overline{\mathcal U}$. Choose $c\in(1,R)$ such that
	$c^{-1}Q\in\mathcal U$. Since $\liminf\lambda_i\geq R>c$, for large $i$
	we have $c^{-1}\lambda_i\in[1,\Lambda_i]$. The convergence
	$(c^{-1}\lambda_iN_i,p,\Gamma)\GH c^{-1}Q\in\mathcal U$ shows that
	$c^{-1}\lambda_i\in E_i$ for large $i$. Therefore $\lambda_i<e^{1/i}\inf E_i\leq e^{1/i}c^{-1}\lambda_i$, which is impossible for large $i$.

	\textbf{Case 2:} $\sigma=-1$.
	Interlacing in the opposite order, choose $r_i,s_i\to0$ such that
	$$
	(r_iN,p,\Gamma)\GH A,\qquad
	(s_iN,p,\Gamma)\GH B,\qquad
	\frac{s_i}{r_i}\to\infty.
	$$
	Put $N_i=r_iN$ and $\Lambda_i:=s_i/r_i$. Then
	$$
	(N_i,p,\Gamma)\GH A,\qquad
	(\Lambda_iN_i,p,\Gamma)\GH B,\qquad
	\Lambda_i\to\infty.
	$$
	Define $E_i$ as above. Since $(N_i,p,\Gamma)\GH A\in\mathcal U$, the
	set $E_i$ is nonempty for large $i$. Choose $\lambda_i\in E_i$ such
	that $e^{-1/i}\sup E_i<\lambda_i\leq\sup E_i$.

	We claim that $\liminf(\Lambda_i/\lambda_i)\geq R$, where this means
	$\Lambda_i/\lambda_i\to+\infty$ if $R=+\infty$. Otherwise, after
	passing to a subsequence, $\Lambda_i/\lambda_i\to a\in[1,R)$, and hence
	$(\lambda_iN_i,p,\Gamma)\GH a^{-1}B$. Since $\lambda_i\in E_i$, the
	preceding observation gives $a^{-1}B\in\overline{\mathcal U}$, a
	contradiction.

	After passing to a subsequence,
	$(\lambda_iN_i,p,\Gamma)\GH Q$. Since
	$\lambda_ir_i\leq s_i\to0$ and $\lambda_i\in E_i$, we have
	$Q\in\overline{\mathcal U}$. Choose $c\in(1,R)$ such that
	$cQ\in\mathcal U$. Since
	$\liminf(\Lambda_i/\lambda_i)\geq R>c$, for large $i$ we have
	$c\lambda_i\in[1,\Lambda_i]$. The convergence
	$(c\lambda_iN_i,p,\Gamma)\GH cQ\in\mathcal U$ shows that
	$c\lambda_i\in E_i$ for large $i$. Therefore $c\lambda_i\leq\sup E_i<e^{1/i}\lambda_i$, which is impossible for large $i$.
\end{proof}

In the case $R=+\infty$, the following more convenient form holds.

\begin{cor}\label{cor:critical-scaling}
	Let $(N,p)$ be an open manifold with $\Ric\geq0$, and let
	$\Gamma<\Isom(N)$ be closed. Let $\mathcal U$ be a nonempty open subset
	of $\Omega(N,\Gamma)$.
	\begin{itemize}
		\item [(1)] Suppose that some equivariant asymptotic cone of every
		$Q\in\overline{\mathcal U}$ belongs to $\mathcal U$. Then, for every
		$B\in\Omega(N,\Gamma)$, there exists $a\in[1,+\infty)$ such that
		$aB\in\overline{\mathcal U}$. In particular, some equivariant asymptotic cone of $B$ belongs to $\mathcal U$.

		\item [(2)] Suppose that some equivariant tangent cone of every
		$Q\in\overline{\mathcal U}$ at the reference point belongs to
		$\mathcal U$. Then, for every $B\in\Omega(N,\Gamma)$, there exists
		$a\in[1,+\infty)$ such that $a^{-1}B\in\overline{\mathcal U}$. In particular, some equivariant tangent cone of $B$ at the reference point belongs to
		$\mathcal U$.
	\end{itemize}
\end{cor}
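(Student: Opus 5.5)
This corollary is Proposition \ref{prop:critical-scaling} specialized to $R=+\infty$, using the convention of the remark that the return factor may be $c=+\infty$. The plan is to check the hypothesis in each case and then read off the conclusion, including the ``in particular'' clauses.

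For (1), take $\sigma=1$ and $R=+\infty$. Let $Q\in\overline{\mathcal U}$. By assumption some equivariant asymptotic cone of $Q$ lies in $\mathcal U$. By the remark, this is exactly the statement that $c^{-1}Q\in\mathcal U$ with $c=+\infty$, and the proof of the proposition allows this value of $c$. Hence the hypothesis of Proposition \ref{prop:critical-scaling} holds. The proposition then gives, for every $B\in\Omega(N,\Gamma)$, some $a\in[1,+\infty)$ with $aB\in\overline{\mathcal U}$.

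For the ``in particular'' clause of (1):
\begin{itemize}
\item Put $Q:=aB\in\overline{\mathcal U}$. Applying the hypothesis to $Q$, some equivariant asymptotic cone of $aB$ lies in $\mathcal U$.
\item An equivariant asymptotic cone of $aB$ is a limit of $(r_i aB)$ with $r_i\to0$. Writing $r_i a=r_i'$ with $r_i'\to0$, it is also an equivariant asymptotic cone of $B$.
\end{itemize}
So some equivariant asymptotic cone of $B$ belongs to $\mathcal U$.

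For (2), take $\sigma=-1$ and $R=+\infty$. Here $c=+\infty$ means, by the remark, that some equivariant tangent cone of $Q$ at the reference point lies in $\mathcal U$, which is exactly the assumption. The proposition gives $a\in[1,+\infty)$ with $a^{-1}B\in\overline{\mathcal U}$. Apply the hypothesis to $Q:=a^{-1}B$: it has a tangent cone at the reference point in $\mathcal U$. Tangent cones of $a^{-1}B$ at the reference point are limits of $(\lambda_i a^{-1}B)$ with $\lambda_i\to\infty$, and these coincide with tangent cones of $B$. Hence some equivariant tangent cone of $B$ at the reference point belongs to $\mathcal U$.

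There is no genuine obstacle here. The only point to watch is that the proposition's proof really accommodates $c=+\infty$. That proof first replaces any admissible return factor by a finite $c\in(1,R)$, using openness of $\mathcal U$ and the definition of asymptotic and tangent cones. Since a tangent or asymptotic cone is a limit of finite rescalings, some finite rescaling $c^{\mp1}Q$ is GH-close to it and therefore lies in the open set $\mathcal U$. This is the step I would verify carefully; the rest is bookkeeping about rescalings being invariant under a fixed factor $a$.
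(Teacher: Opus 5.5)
Your proposal is correct and follows the paper's route: the corollary is Proposition~\ref{prop:critical-scaling} with $R=+\infty$, using the remark's convention $c=+\infty$. The proposition's proof reduces that case to a finite $c\in(1,\infty)$, using openness of $\mathcal U$ in $\Omega(N,\Gamma)$ together with the fact that finite rescalings of elements of $\Omega(N,\Gamma)$ stay in $\Omega(N,\Gamma)$. The ``in particular'' clauses then follow, exactly as you argue, by applying the hypothesis to $aB$ or $a^{-1}B$ and noting that a fixed rescaling does not change the asymptotic or tangent cones.
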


\section{Isotropy stability}
\label{sec:isotropy}

This section gives a stability condition for the isotropy at the reference point of every equivariant limit, which can be viewed as a refinement of \cite[Lemma 4.1]{HH25a}.

Throughout this section, let $(M,p)$ be an open $n$-manifold with $\Ric\ge0$ and Euclidean volume growth, and let $\Gamma$ be a closed abelian subgroup of $\Isom(M)$. Denote by $\Omega(M,\Gamma)$ the collection of all equivariant asymptotic cones of $(M,\Gamma)$. For $(Y,y,G)\in\Omega(M,\Gamma)$, write $I(G)$ for the isotropy subgroup of $G$ at $y$, called the central isotropy of $G$, and write $CS(Y)$ for the unit cross-section of $Y$ with its intrinsic metric. Then $I(G)$ is a closed subgroup of $\Isom(CS(Y))$, and by Colding--Naber \cite{CN12} both $G$ and $I(G)$ are Lie groups. Moreover, $CS(Y)$ is a noncollapsed $\mathrm{RCD}(n-2,n-1)$-space; see \cite{K15,DPG18}.

We use Pan's property $(P)$ \cite{Pan25}, in the formulation of \cite[Definition~4.2]{HH25a}.

\begin{defn}[Property $(P)$]\label{def:P}
	Let $(Y,y,G)\in\Omega(M,\Gamma)$, and write its maximal Euclidean
	decomposition as $Y=E\times C(Z)$. For $g\in G$, write $g(e,r,z)=(Q_ge+a_g,r,\sigma_gz)$. We say that $(Y,y,G)$ has property $(P)$ if both
	$$
	g_0(e,r,z):=(Q_ge,r,\sigma_gz)
	\quad\text{and}\quad
	T_g(e,r,z):=(e+a_g,r,z)
	$$
	belong to $G$ for every $g\in G$.
\end{defn}

\begin{rem}\label{rem:P}
	If $(Y,y,G)$ has property $(P)$, then abelian $G$ is isomorphic to the direct product of $I(G)$ and its translation subgroup. Moreover, the central orbit is a closed additive subgroup of $E$ and hence has the form
	\begin{equation}\label{eq:central-orbit}
		G\cdot y=V\oplus\Lambda,
	\end{equation}
	where $V<E$ is a vector subspace and $\Lambda$ is a lattice in a subspace of $V^\perp$.  In particular, $\Lambda\cong\mathbb Z^k$ for some $k$.
	
	Since $G\cdot y$ lies in a Euclidean factor of $Y$, we let $\mathrm{span}_\R(G\cdot y)$ denote the subspace spanned by the elements of $Gy$. Under dilation,
	$$
	r(G\cdot y)\GH\operatorname{span}_{\mathbb R}(G\cdot y)
	\quad\text{as }r\to0,
	\qquad
	r(G\cdot y)\GH V
	\quad\text{as }r\to\infty.
	$$
	The corresponding equivariant limits remain in $\Omega(M,\Gamma)$.
\end{rem}

In the following proof, $d_{GH}$ always denotes the pointed
Gromov--Hausdorff distance between the central orbits equipped with
their induced metrics. Thus, for example,
$d_{GH}(r(G\cdot y),\R^m)$ compares the pointed orbit
$G\cdot y$ in the rescaled space $rY$ with $(\R^m,0^m)$.

\begin{prop}
	\label{prop:Pimpliesconnectedorbit}
	If every element of $\Omega(M,\Gamma)$ has property $(P)$, then there is an integer $0\leq m\leq n$ such that the central orbit $G\cdot y$ is isometric to $\R^m$ for every $(Y,y,G)\in\Omega(M,\Gamma)$. 
\end{prop}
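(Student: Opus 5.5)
Under property $(P)$, Remark~\ref{rem:P} gives for each $(Y,y,G)\in\Omega(M,\Gamma)$ a decomposition $G\cdot y=V\oplus\Lambda$. So the task is to show two things: $\Lambda=0$ always, and $\dim V$ is the same for every cone. For $Q=(Y,y,G)$ set $v(Q):=\dim V$ and $s(Q):=\dim\mathrm{span}_\R(G\cdot y)=\dim V+\operatorname{rank}\Lambda$, so $v\le s\le n$. The orbit is isometric to $\R^m$ exactly when $v(Q)=s(Q)=m$. By Remark~\ref{rem:P}, the tangent cone of $Q$ at $y$ has central orbit $\mathrm{span}_\R(G\cdot y)\cong\R^{s(Q)}$, and the asymptotic cone of $Q$ has central orbit $V\cong\R^{v(Q)}$. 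Both stay in $\Omega(M,\Gamma)$. I would compare these integers across all of $\Omega$ using Corollary~\ref{cor:critical-scaling}.

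First I would set up the open sets. For each integer $m$, let
\[
\mathcal U_m:=\{Q\in\Omega(M,\Gamma): d_{GH}(G\cdot y,\R^m)<\epsilon\},
\]
measured on the pointed orbits at a fixed scale, say on unit balls, with $\epsilon=\epsilon(n)$ small. Openness should follow from continuity of central orbits under equivariant convergence. The key step is a rigidity claim: if $\epsilon$ is small and $Q\in\overline{\mathcal U_m}$, then $v(Q)\le m\le s(Q)$. The idea is that an orbit of the form $V\oplus\Lambda$ which is $\epsilon$-close to $\R^m$ on the unit ball cannot have more continuous directions than $m$. Nor can it span fewer than $m$ dimensions, since a lattice coarser than scale $\epsilon$ is visible at unit scale.

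Next I would show that all $s(Q)$ agree, by taking $m_0:=\max_Q s(Q)$. For $Q\in\overline{\mathcal U_{m_0}}$ we have $s(Q)\ge m_0$, hence $s(Q)=m_0$. The tangent cone of $Q$ at $y$ then has orbit $\R^{m_0}$ and so lies in $\mathcal U_{m_0}$. Corollary~\ref{cor:critical-scaling}(2) therefore applies, and every $B\in\Omega$ has a tangent cone in $\mathcal U_{m_0}$, so $s(B)\ge m_0$. This gives $s\equiv m_0$.

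Symmetrically, I would take $m_1:=\min_Q v(Q)$. For $Q\in\overline{\mathcal U_{m_1}}$ we have $v(Q)\le m_1$, hence $v(Q)=m_1$, and the asymptotic cone of $Q$ has orbit $\R^{m_1}$. Corollary~\ref{cor:critical-scaling}(1) then shows every $B$ has an asymptotic cone with orbit $\R^{m_1}$, so $v(B)\le m_1$. Thus $v\equiv m_1$.

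It remains to show $m_0=m_1$, which is where I expect the real obstacle. I would pick a cone $Q$ whose orbit is some $\R^{m'}$, for instance the tangent cone of any element. Then $v(Q)=s(Q)=m'$, so $m_0=m'=m_1$. Consequently every cone has $v=s=m$, which forces $\Lambda=0$, and every orbit is isometric to $\R^m$; the bound $m\le n$ is immediate. The technically delicate part is the rigidity claim in the second paragraph, namely choosing the scale and $\epsilon$ so that closeness to $\R^m$ genuinely pins down both $\dim V$ and the span dimension. If $V\oplus\Lambda$ has a very fine lattice it looks like a higher-dimensional $\R^{m}$, and this is exactly why I only claim the inequalities $v\le m\le s$ rather than equality.
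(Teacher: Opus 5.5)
You have the two limits in Remark~\ref{rem:P} interchanged. Since $aP=(aX,x,G)$ multiplies the metric by $a$, the limit $r\to0$ is the blow-down, i.e.\ the asymptotic cone of $Q$. There the lattice $\Lambda$ becomes dense, so the central orbit becomes $\mathrm{span}_\R(G\cdot y)\cong\R^{s(Q)}$. The limit $r\to\infty$ is the tangent cone at $y$. There the nonzero points of $\Lambda$ escape to infinity, so the central orbit becomes $V\cong\R^{v(Q)}$.

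Consequently, both of your applications of Corollary~\ref{cor:critical-scaling} verify a hypothesis that is false as written. In your second step (take $m_0\ge1$), a cone whose central orbit is a fine lattice $\delta\Z^{m_0}$ lies in $\mathcal U_{m_0}$ and has $s=m_0$. But its tangent cone at $y$ has orbit $\{y\}$, which is not in $\mathcal U_{m_0}$, so part (2) cannot be invoked. In your third step, a cone with orbit $\R^{m_1}\oplus L\Z e$, where $e\perp\R^{m_1}$ and $L>2$, lies in $\mathcal U_{m_1}$ and has $v=m_1$. But its asymptotic cone has orbit $\R^{m_1+1}$, so part (1) cannot be invoked. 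Such cones cannot be excluded a priori, since excluding them is exactly what the proposition asserts.

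The repair is to swap the two parts. For $m_0=\max s$, your rigidity claim gives $s(Q)=m_0$ on $\overline{\mathcal U_{m_0}}$. The asymptotic cone of such a $Q$ then has orbit $\R^{m_0}$ and lies in $\mathcal U_{m_0}$, so part (1) gives $s\equiv m_0$. Symmetrically, part (2) gives $v\equiv m_1$. With this correction your argument is valid, but half of it is unnecessary.

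The corrected first half, followed by a one-line tangent-cone argument, is exactly the paper's proof. The paper takes $\mathcal U$ to be the $\epsilon$-neighborhood in $\Omega(M,\Gamma)$ of the cones with orbit $\R^m$, $m$ maximal, which also avoids any openness question. Once $s\equiv m$ is known, take an arbitrary $B$ and its tangent cone at $y$. That cone has central orbit $V_B$, and its own span dimension must also be $m$. Hence $\dim V_B=m=s(B)$, which forces $\Lambda_B=0$ and $G\cdot y\cong\R^m$.

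So the minimum-of-$v$ step and the comparison $m_0=m_1$, which you flagged as the main obstacle, are not needed. Of your rigidity claim, only the lower bound $m\le s$ is actually used.
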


\begin{proof}
	
	By Remark~\ref{rem:P}, the tangent cone at the reference point and the asymptotic cone at infinity of any element of $\Omega(M,\Gamma)$ have Euclidean central orbits. Let $m$ be the maximal dimension of such an orbit, and call $(Y,y,G)\in\Omega(M,\Gamma)$ good if $G\cdot y\cong\R^m$.
	
	There exists $\epsilon\in(0,0.1^n)$ such that, for any $(X,x,H)\in\Omega(M,\Gamma)$, if $d_{GH}(H\cdot x,\R^m)<2\epsilon$, then $\dim\operatorname{span}_{\R}(H\cdot x)\ge m$. Since $\Omega(M,\Gamma)$ is closed under blowing down, and by the maximality of $m$, we also have $\dim\operatorname{span}_{\R}(H\cdot x)\le m$. Hence, for any $(X,x,H)\in\Omega(M,\Gamma)$, if $d_{GH}(H\cdot x,\R^m)<2\epsilon$, then
	\begin{equation}\label{eq:Hx}
		\dim\operatorname{span}_{\R}(H\cdot x)=m.
	\end{equation}
	
	Let $S$ be the collection of good elements of $\Omega(M,\Gamma)$, which is obviously nonempty, and set $\mathcal U:=B_\epsilon(S)$, the open $\epsilon$-neighborhood of $S$ in $\Omega(M,\Gamma)$. For every $(X,x,H)\in\overline{\mathcal U}$, by the choice of $\epsilon$, \eqref{eq:Hx} holds. Hence every asymptotic cone of $(X,x,H)$ is good, that is, belongs to $S$. Applying Corollary \ref{cor:critical-scaling}, we conclude that for any $(X,x,H)\in\Omega(M,\Gamma)$ there exists $a\in[1,+\infty)$ such that $a(X,x,H)\in\overline{\mathcal U}$. This implies that \eqref{eq:Hx} holds for every $(X,x,H)\in\Omega(M,\Gamma)$, and hence every such element is good; otherwise its tangent cone at the reference point would contradict (\ref{eq:Hx}). The proof is finished.
\end{proof}

We recall the following facts which will be used in the proof of the next proposition. For $(Y_i,y_i,G_i)\GH(Y,y,G)\in\Omega(M,\Gamma)$,

\customitemize{P}
	\item\label{itm:P1} Any equivariant tangent cone at $y$ has property $(P)$ and its isotropy is isomorphic to $I(G)$ \cite[Lemma~4.3]{HH25a};
	\item \label{itm:P2} Up to a subsequence, $(CS(Y_i),I(G_i))\GH(CS(Y),K)$ where $K\subset I(G)$, and for every large $i$, there is an $\epsilon_i$-Gromov-Hausdorff approximation $\phi_i:I(G_i)\to K$ which is a Lie group monomorphism (\cite{MRW08,PR18}), where $\epsilon_i\to0$.
	\item\label{itm:P3} If each $(Y_i,y_i,G_i)$ satisfies property $(P)$, then $(CS(Y_i),I(G_i))\GH(CS(Y),I(G))$ \cite[Lemma~4.4]{HH25a}.
\end{enumerate}

The following proposition follows by an argument analogous to that of \cite[Lemma 4.1]{HH25a}. In its proof, all isotropy groups mentioned are central isotropy.

\begin{prop}[Asymptotic-isotropy criterion]\label{prop:isotropy-criteria}
	Each of the following conditions implies that all central isotropy groups arising in $\Omega(M,\Gamma)$ are isomorphic.
	\begin{enumerate}[label=\textup{(\arabic*)}]
		\item one cone in $\Omega(M,\Gamma)$ has finite central isotropy;
		\item the integer $\dim I(G)$ is the same for every $(Y,y,G)\in\Omega(M,\Gamma)$.
	\end{enumerate}
	Consequently, every element in $\Omega(M,\Gamma)$ has property $(P)$. 
\end{prop}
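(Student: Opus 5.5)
Both criteria will be handled by one critical-scaling argument built on the facts \ref{itm:P1}--\ref{itm:P3}. The first task is to set up a semicontinuity statement. Take a convergent sequence $(Y_i,y_i,G_i)\GH(Y,y,G)$ in $\Omega(M,\Gamma)$. By \ref{itm:P2}, for large $i$ there is a Lie group monomorphism $I(G_i)\to K\subset I(G)$ that is an $\epsilon_i$-approximation. Consequently $\dim I(G_i)\le\dim I(G)$, and when the dimensions agree, $I(G_i)$ is isomorphic to a closed subgroup of $I(G)$ of the same dimension. When each $Y_i$ has property $(P)$, \ref{itm:P3} says $I(G_i)$ converges to all of $I(G)$. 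A subgroup that is $\epsilon_i$-dense in a compact Lie group, is closed, and has the same dimension must be the whole group for small $\epsilon_i$, because a proper closed subgroup is at definite distance from some element. So in that case $I(G_i)\cong I(G)$ for large $i$. Similarly, \ref{itm:P1} says that passing to a tangent cone at $y$ keeps the isotropy type and produces property $(P)$.

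Next I fix the target isotropy type $K_0$ and the good set. For criterion (2), let $K_0$ be a central isotropy group of minimal order of components among those of the common dimension $d$. For criterion (1), let $K_0$ be a finite central isotropy group of minimal order. Define $S$ as the set of $(Y,y,G)\in\Omega(M,\Gamma)$ with property $(P)$ and $I(G)\cong K_0$. By \ref{itm:P1} and minimality, $S$ is nonempty: the tangent cone of a cone realizing $K_0$ lies in $S$. Set $\mathcal U=B_\epsilon(S)$ for a small $\epsilon$.

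I will then verify the hypothesis of Corollary \ref{cor:critical-scaling}(2). Take $Q\in\overline{\mathcal U}$; it is a limit of elements of $S$, which have property $(P)$. By the semicontinuity above and \ref{itm:P3}, $I(Q)\cong K_0$. Under (1), dimension $0$ forces $I(Q)$ to be finite. Under (2), $I(Q)$ is a closed subgroup of the limit with maximal dimension and, by minimality, the same component group. By \ref{itm:P1}, the tangent cone of $Q$ at its reference point has property $(P)$ and isotropy $\cong K_0$, so it lies in $S\subset\mathcal U$. Corollary \ref{cor:critical-scaling}(2) now shows that every $B\in\Omega(M,\Gamma)$ has a rescaling $a^{-1}B$ in $\overline{\mathcal U}$. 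Hence $I(B)\cong K_0$, since rescaling does not change the isotropy. The final clause follows because \cite[Lemma~4.1]{HH25a}-type arguments give property $(P)$ once the isotropy is constant: the maps $g\mapsto g_0,T_g$ are recovered as limits of central isotropy along blow-downs.

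The step I expect to be the main obstacle is the semicontinuity in the non-$(P)$ direction. Under criterion (1) one has to rule out $I(Q)$ jumping up to a positive-dimensional group inside $\overline{\mathcal U}$, and under (2) one has to rule out the limit group having more components than $K_0$. My first attempt is to apply \ref{itm:P3} to the approximating elements of $S$ together with the $\epsilon_i$-monomorphism of \ref{itm:P2} in the reverse direction. Failing that, I would use the minimality of $K_0$ along a diagonal sequence of tangent cones.
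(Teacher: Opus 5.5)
\textbf{There is a genuine gap in how you verify the hypothesis of Corollary \ref{cor:critical-scaling}(2).} Your overall plan matches the paper's: a minimal $K_0$, the set $S$ of property-$(P)$ cones with isotropy $K_0$, $\mathcal U=B_\epsilon(S)$, Corollary \ref{cor:critical-scaling}(2), and then a blow-down to get property $(P)$.

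The problem is the claim that every $Q\in\overline{\mathcal U}$ is a limit of elements of $S$. For $\epsilon>0$ this is false: $\overline{B_\epsilon(S)}$ contains cones at distance up to $\epsilon$ from $S$, not only points of $\overline S$. The same gap reappears in your last step, where $a^{-1}B\in\overline{\mathcal U}$ is supposed to give $I(B)\cong K_0$. Your semicontinuity bounds the isotropy of such cones only from above, via (\ref{itm:P2}).

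What is missing is the uniform gap lemma that the paper proves before invoking the corollary. There is $\eta>0$ such that every cone in $\Omega(M,\Gamma)$ that is $\eta$-close to some element of $S$ has central isotropy isomorphic to $K_0$. One then takes $\epsilon=\eta/2$.

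\textbf{How the gap lemma is proved.} It is a two-sequence argument, and this is exactly where minimality of $K_0$ is needed, not where you used it.
\begin{enumerate}
\item Suppose $P_i\in S$ and $Q_i\in\Omega(M,\Gamma)$ with $d_{GH}(P_i,Q_i)\to0$ and $I(Q_i)\not\cong K_0$. Pass to a common limit $L$.
\item By (\ref{itm:P3}) and a density argument, $I(L)\cong K_0$. This uses only finiteness (case (1)) or constancy of dimension (case (2)); it uses no minimality.
\item By (\ref{itm:P2}), $I(Q_i)$ embeds as a closed subgroup of $I(L)\cong K_0$. Under (1) it is therefore finite of order at most $\#K_0$. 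Under (2) its image has dimension $d$, so it contains $I(L)_0$ and has at most as many components as $K_0$.
\item Minimality of $K_0$ then forces the embedding to be onto. Hence $I(Q_i)\cong K_0$, a contradiction.
\end{enumerate}

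So the real danger inside $\overline{\mathcal U}$ is the isotropy dropping to a proper subgroup of $K_0$. The upward jumps you list as your main obstacle (positive dimension under (1), extra components under (2)) are already ruled out by (\ref{itm:P2}). Once the gap lemma is in place, the rest of your outline goes through as in the paper, including the blow-down argument for property $(P)$.
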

\begin{rem}
	The stability of isotropies of this proposition does not rely on the abelianness of $\Gamma$, which therefore implies \cite[Lemma 4.1]{HH25a}. 
\end{rem}

\begin{proof}
	In case~(1), choose a finite isotropy $K$ with the smallest order. In case~(2), choose an isotropy $K$ with the smallest number of components among the groups of the common dimension. By (\ref{itm:P1}), we may replace its cone by an equivariant tangent cone at the reference point and assume that the cone has property $(P)$.

	There is an $\eta>0$ such that any asymptotic cone which is $\eta$-close to a property-$(P)$ cone with central isotropy $K$ also has isotropy isomorphic to $K$. Otherwise, take two sequences converging to the same limit. By (\ref{itm:P2}) and (\ref{itm:P3}), the isotropies of the property-$(P)$ sequence converge to the full limiting isotropy $J$ and admit monomorphisms into $J$. The minimal choice of $K$ implies $J\simeq K$. The isotropies of the second sequence also inject into $J$, and the same minimality shows that these monomorphisms are onto. This proves the gap. The same argument shows that the class of property-$(P)$ cones with isotropy $K$ is sequentially closed.

	Let $\Omega_K^P$ be the collection of $(Y,y,G)\in\Omega(M,\Gamma)$ satisfying $I(G)\cong K$ and property $(P)$, and define $\mathcal U:=B_{0.5\eta}(\Omega_K^P)$, where $\eta$ is as above. Combining the above discussion and \cite[Lemma 4.3]{HH25a}, one verifies directly that the condition (2) of Corollary \ref{cor:critical-scaling} applies to $\mathcal U$. Hence, for every $(Y,y,G)\in\Omega(M,\Gamma)$, there exists $a\in[1,+\infty)$ such that $a^{-1}(Y,y,G)\in\overline{\mathcal U}$. Applying the property of $\eta$ again, we get that $a^{-1}(Y,y,G)$ has isotropy $K$, and hence $I(G)\cong K$.

	Blow down any cone action about its reference point. The original isotropy embeds into the limiting isotropy again by (\ref{itm:P2}), and the uniqueness just proved shows that this embedding is onto. For $g(e,r,z)=(Q_ge+a_g,r,\sigma_gz)$, its blowdown has vertex-fixing part $g_0(e,r,z)=(Q_ge,r,\sigma_gz)\in I(G)$. Therefore $gg_0^{-1}$ is the pure translation by $a_g$, and the cone has property $(P)$.

\end{proof}

\section{Finite generation}

In this section, we prove the finite generation part of Theorem \ref{thm:main}. We first prove it assuming Theorem \ref{thm:free-abelian-fg} and Theorem \ref{thm:torsion}, which deal respectively with the torsion-free subgroup and the torsion subgroup; our main effort later goes into proving these two theorems.  The same argument applies under the hypotheses of Theorem \ref{thm:main-codim5}, after lifting the $\R^{n-5}$-factor and restricting attention to the factors on which the limiting group acts nontrivially. 

\begin{proof}[Proof of finite generation of Theorem \ref{thm:main}]
	To begin with, the hypotheses of Theorem \ref{thm:main} are preserved under passage to any cover. Arguing by contradiction, suppose that $\pi_1(M)$ is not finitely generated. By Lemma \ref{lem:wilking}, after passing to an appropriate cover of $M$, we may assume that $\pi_1(M)$ is abelian and not finitely generated, and that it is either torsion or torsion-free. We then obtain a contradiction from Theorem \ref{thm:free-abelian-fg} or Theorem \ref{thm:torsion}.
\end{proof}

\subsection{$\pi_1(M)$ is torsion-free}\label{sec:free-abelian}

In this section, we prove the following.
\begin{thm}\label{thm:free-abelian-fg}
	Let $M$ be as in Theorem \ref{thm:main}. Further assume that $\Gamma:=\pi_1(M)$ is nontrivial torsion-free abelian. Then there exists an integer $l\ge0$ such that, for every $(Y,y,G)\in\Omega(\tilde M,\Gamma)$, the central orbit $G\cdot y$ is isometric to $\R^l$. Consequently, $\pi_1(M)$ is finitely generated.
\end{thm}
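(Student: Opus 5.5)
We aim to verify the hypothesis of Proposition~\ref{prop:Pimpliesconnectedorbit}, namely that every element of $\Omega(\tilde M,\Gamma)$ has property $(P)$. We get this from the asymptotic-isotropy criterion, Proposition~\ref{prop:isotropy-criteria}. Once all central orbits are isometric to a fixed $\R^l$, every $G\cdot y$ is connected. Finite generation then follows from Lemma~\ref{lem:short-gen-cone} applied to $N=M$. The substance is therefore to show that $\dim I(G)$ is constant over $\Omega:=\Omega(\tilde M,\Gamma)$, or that some cone has finite central isotropy.

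\textbf{Step 1: structure of the cones.} Since $\Gamma$ is nontrivial and torsion-free, it contains $\Z$. Lemma~\ref{lem:freesplits} (applied to $\tilde M$ and a $\Z$-subgroup) shows that every $(Y,y,G)\in\Omega$ has $Y$ splitting off a line, with $G\supset\R$ acting by translations along it up to a compact part. If $Y$ splits off $\R^2$ or more, we use this extra room directly. The worst case is
\[
Y\cong\R\times C(Z),
\]
with $Z$ a noncollapsed $\mathrm{RCD}(2,3)$ space. In this case $I(G)$ acts effectively on $Z$; since $\Gamma$ is abelian, so are $G$ and $I(G)$. By \cite{BPS24}, $Z$ is homeomorphic to a spherical $3$-manifold. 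Compact abelian Lie groups acting effectively on a $3$-manifold have identity component a torus of rank at most $2$. Hence $\dim I(G)\in\{0,1,2\}$, and if $0$ occurs anywhere we are done by case~(1) of Proposition~\ref{prop:isotropy-criteria}. When $Y$ has a larger Euclidean factor, the cross-section has lower dimension and the same argument bounds the torus rank.

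\textbf{Step 2: ruling out mixed dimensions.} We may therefore assume $\dim I(G)\in\{1,2\}$ on all of $\Omega$ with both values occurring, and aim for a contradiction. I would first try to show that $\dim I(G)=2$ is impossible outright.
\begin{itemize}
\item[(a)] A $T^2$ acting effectively on a spherical $3$-manifold $Z$ forces $Z$ to be a lens space with cohomogeneity-one action, and the principal orbits are $2$-tori.
\item[(b)] Pass to an equivariant tangent cone at $y$. By (\ref{itm:P1}) it has property $(P)$ and isomorphic isotropy. Then $G\cong I(G)\times(\text{translations})$, with the translation part containing $\R$.
\item[(c)] Combine with the $\Z$-action of a deck element and Lemma~\ref{lem:freesplits} to look for an additional free $\R$ inside the torus. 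This would force an extra Euclidean factor and contradict the maximality of $\R\times C(Z)$ together with effectiveness on $Z$.
\end{itemize}

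If (c) does not close, I would use critical scaling instead. Let $\mathcal U$ be a small neighbourhood of the cones with $\dim I=1$ and property $(P)$. By the gap argument in the proof of Proposition~\ref{prop:isotropy-criteria}, limits of such cones keep $\dim I\ge1$, and since the monomorphisms in (\ref{itm:P2}) raise dimension only in the limit, the dimension is upper semicontinuous. The idea is then to verify hypothesis~(2) of Corollary~\ref{cor:critical-scaling}: every $Q\in\overline{\mathcal U}$ should have a tangent cone in $\mathcal U$. The danger is a $Q$ with $\dim I(Q)=2$ near $\mathcal U$; at its tangent cone the $T^2$ persists, so we must exclude it separately. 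I would analyse how a circle subgroup $S^1\subset T^2$ arises as a limit of the $\dim 1$ isotropies, using the fixed circles of $S^1$-subgroups on lens spaces.

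\textbf{Main obstacle.} I expect Step 2 to be the hard part: excluding the coexistence of one- and two-dimensional central isotropy. This requires the explicit classification of effective $T^k$-actions on spherical $3$-manifolds, together with a careful critical-scaling argument in which the isotropy dimension may jump in limits. Once dimension constancy holds, Proposition~\ref{prop:isotropy-criteria}(2) gives property $(P)$ everywhere, Proposition~\ref{prop:Pimpliesconnectedorbit} gives $G\cdot y\cong\R^l$ for all cones, and Lemma~\ref{lem:short-gen-cone} gives finite generation.
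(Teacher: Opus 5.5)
\textbf{There is a genuine gap.} Your framework matches the paper: the structure in Step 1, and the endgame through Proposition~\ref{prop:isotropy-criteria}, Proposition~\ref{prop:Pimpliesconnectedorbit} and Lemma~\ref{lem:short-gen-cone}. But Step 2, which is the actual content, is not carried out, and both routes you sketch fail as stated.

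\textbf{Route (c) cannot work.} Two-dimensional central isotropy really occurs. Take the flat manifold $M=\R^5/\spa{\gamma}$ with $\gamma(t,z_1,z_2)=(t+1,e^{2\pi i\alpha}z_1,e^{2\pi i\beta}z_2)$ and $1,\alpha,\beta$ rationally independent. Every equivariant asymptotic cone is $(\R^5,0,\R\times T^2)$, with $I(G)=T^2$ acting effectively on $S^3$. Moreover, a compact torus contains no closed $\R$-subgroup that could produce an extra line.

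\textbf{The critical-scaling fallback also fails.} It breaks exactly where you worry: there is no gap lemma for the class of one-dimensional property-$(P)$ cones. Circle subgroups of $T^2$, e.g.\ of slope $(1,k)$ with $k\to\infty$, converge to $T^2$. So nothing prevents cones with $T^2$ isotropy from lying in $\overline{\mathcal U}$. Then neither hypothesis (2) of Corollary~\ref{cor:critical-scaling} nor the desired conclusion $\dim I(G)=1$ follows. More basically, you aim to show that $\dim I(G)$ is constant on all of $\Omega$ unconditionally. The paper never proves this, only under a contradiction hypothesis, and you supply no invariant that would force it.

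\textbf{The missing idea.} Argue by contradiction from a cone $(Y_1,y_1,G_1)$ whose orbit is not Euclidean, and extract from it a seed carrying an extra, stable invariant.
\begin{enumerate}
\item Since $G_1$ contains a closed $\R$, a one-dimensional span would force $G_1\cdot y_1=\R$. So the span is at least two-dimensional and $I(G_1)$ fixes an $\R^2$-factor.
\item The remaining link is then $S^2$, so $I(G_1)_0\cong S^1$ has a fixed point on $Z$ (Claim~\ref{claim:iso-fixed}).
\item Run critical scaling on the class $\Omega_K$ of cones with $\dim I(G)=1$ whose identity component fixes a point of $Z$, with $K$ having the fewest components.
\end{enumerate}
The fixed point is what controls the dimension jump, in two ways:
\begin{itemize}
\item It passes to limits, while an effective $T^2$-action on a $3$-manifold has no fixed point (Lemma~\ref{lem:T2-no-fixed-points}). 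So limits of property-$(P)$ cones in $\Omega_K$ cannot have $T^2$ isotropy.
\item It survives small perturbations of the circle action, by Lemma~\ref{lem:S^1fixedpt} combined with the homeomorphism stability of \cite{BPS24}.
\end{itemize}
Together these give the gap Lemma~\ref{lem:gap}. Critical scaling then yields $\Omega=\Omega_K$, hence $\dim I(G)\equiv1$, and Propositions~\ref{prop:isotropy-criteria} and~\ref{prop:Pimpliesconnectedorbit} contradict the choice of $(Y_1,y_1,G_1)$. Note that the slope-$(1,k)$ circles with $k\neq0$ have no fixed points on $S^3$, so they are exactly what this class excludes.
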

\begin{rem}\label{rem:codim5}
	The conclusion also holds for $M$ as in Theorem \ref{thm:main-codim5}; since the proof presents no new difficulties, we do not write it out in this general case, so as to keep the notation simple. This fact will be used below to prove parts (1) and (2) of Theorem \ref{thm:main-codim5}.
\end{rem}

Now we start to prove Theorem \ref{thm:free-abelian-fg}. 

\begin{lem}\label{lem:dimIG}
	For every $(Y,y,G)\in\Omega(\tilde M,\Gamma)$, there exists an isometric decomposition $(Y,y)\cong(\R\times C(Z),(0,o_Z))$, where $Z$ is a spherical $3$-manifold, such that the following hold:
	\begin{itemize}
		\item [(1)] $G$ contains a closed $\R$-subgroup and $\R\times\{o_Z\}\subset\mathrm{span}_\R(G\cdot y)$;
		\item [(2)] $I(G)$ acts on the first $\R$-factor trivially;
		\item [(3)] $\dim I(G)\le2$.
	\end{itemize}
	Consequently, $Y$ is isometric to $C(S^0*Z)$, and $I(G)$ fixes $S^0$ and acts on $Z$ effectively, where $*$ denotes the spherical join.
\end{lem}

\begin{proof}
	Since $\Gamma$ contains a $\Z$, Lemma \ref{lem:freesplits} shows that $G$ contains a closed $\R$-subgroup and that $Y$ splits off an $\R$-factor isometrically. As $G$ is abelian, $I(G)$ fixes $y$, hence fixes every point of $G\cdot y$, and therefore fixes $\mathrm{span}_\R(G\cdot y)$ pointwise. Choose the decomposition $(Y,y)\cong(\R\times C(Z),(0,o_Z))$ so that $\R\times\{o_Z\}\subset\mathrm{span}_\R(G\cdot y)$. Then $I(G)$ fixes the $\R$-factor and acts effectively on $Z$. By \cite[Theorem 1.4]{BPS24}, $Z$ is a spherical $3$-manifold. Finally, since $I(G)$ is abelian, $\dim I(G)\ge3$ would force $I(G)$ to contain a subgroup isomorphic to $T^3$; however, a spherical $3$-manifold admits no effective $T^3$-action. Hence $\dim I(G)\le2$.
\end{proof}

In view of Propositions \ref{prop:isotropy-criteria} and \ref{prop:Pimpliesconnectedorbit} and Lemma \ref{lem:dimIG}, we may assume henceforth that $\dim I(G)\in\{1,2\}$. And since $\Omega(\tilde M,\Gamma)$ is connected in the equivariant Gromov--Hausdorff topology, Theorem \ref{thm:free-abelian-fg} will follow once we prove the following.

\begin{prop}\label{prop:free-abelian-fg}
	Let $M$ be as in Theorem \ref{thm:free-abelian-fg}. Assume further that $\dim I(G)\in\{1,2\}$ for every $(Y,y,G)\in\Omega(\tilde M,\Gamma)$. Then, for every $(Y,y,G)\in\Omega(\tilde M,\Gamma)$, the orbit $G\cdot y$ is isometric to $\R^l$ for some integer $l\ge0$, where $l$ may depend on $(Y,y,G)$.
\end{prop}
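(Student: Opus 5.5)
The plan is to combine the criterion of Proposition \ref{prop:isotropy-criteria} with a dichotomy on $\dim I(G)$. If $\dim I(G)$ is constant over $\Omega(\tilde M,\Gamma)$, Proposition \ref{prop:isotropy-criteria}(2) gives property $(P)$ for every cone. Proposition \ref{prop:Pimpliesconnectedorbit} then yields Euclidean central orbits, even with a uniform $l$. So the task is to rule out, or otherwise handle, the coexistence of cones with $\dim I(G)=1$ and cones with $\dim I(G)=2$.

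First I will show that the dimension-two cones form a closed class. By (\ref{itm:P2}), $I(G_i)$ embeds as a Lie group into the limit isotropy, so $\dim I$ is upper semicontinuous under equivariant GH convergence. Since $\dim I\le2$ by Lemma \ref{lem:dimIG}, the set $\Omega_2:=\{\dim I(G)=2\}$ is closed.

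Next I will describe the dimension-two case geometrically. There $I(G)_0\cong T^2$ acts effectively on the spherical $3$-manifold $Z$, and by Lemma \ref{lem:dimIG} it fixes the factor $\R\times\{o_Z\}$. An effective $T^2$-action on a spherical $3$-manifold forces $Z$ to be a lens space $S^3/\Z_q$ with its standard cohomogeneity-one torus action. This action has exactly two circle orbits with circle isotropy, and all other orbits are principal.

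Then I will apply the critical-scaling argument. By Corollary \ref{cor:critical-scaling}(2), with $\mathcal U$ a small neighbourhood of the property-$(P)$ cones having $\dim I=2$, every cone has a tangent cone at the reference point, obtained via (\ref{itm:P1}), that is close to such a cone. I will then use (\ref{itm:P2}) to pass the $T^2$-isotropy back through the approximating maps. This produces, near a cone with $\dim I=1$, elements $\gamma\in\Gamma$ with $(r_i\tilde M,\tilde p,\spa\gamma)$ converging to a circle subgroup of $T^2$ fixing a singular orbit $z_0$. Such a circle fixes $\R\times C(\text{circle})$, a $3$-dimensional set, so the tangent cone at $(0,1,z_0)$ splits off $\R^3$. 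Because $\Gamma$ is torsion-free, the limit of $\spa{\gamma}$ being a compact circle means that orbits of $\spa{\gamma^{k}}$ collapse. A Lemma \ref{lem:noncodim3}-type argument, in its torsion-free variant, where the generated group is infinite cyclic acting almost trivially near a codimension-two set, should then contradict the noncollapsing of $\tilde M$.

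I expect this last step to be the main obstacle. The circle actions on $Z$ fixing a curve, i.e.\ codimension-two fixed sets, are not excluded by the codimension-three lemma directly. I would try first to exploit orientability (\cite{BrBrPi24}) together with the abelianness of $G$: the $\R$-subgroup commutes with the isotropy, which rigidifies how $T^2$ sits in $G$. This should show that $\dim I$ can only jump from $2$ to $1$ by losing the whole $T^2$. That is impossible by the monomorphism in (\ref{itm:P2}), so $\dim I$ is constant on each connected piece of $\Omega(\tilde M,\Gamma)$, and the first paragraph finishes the proof.
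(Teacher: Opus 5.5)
There is a genuine gap. Your reduction is sound as far as it goes: if $\dim I(G)$ is constant, Propositions \ref{prop:isotropy-criteria}(2) and \ref{prop:Pimpliesconnectedorbit} finish the proof, and (\ref{itm:P2}) does make $\{\dim I(G)=2\}$ closed. But the step carrying all the weight, excluding coexistence of cones with $\dim I=1$ and $\dim I=2$, is never carried out.

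\begin{itemize}
\item (\ref{itm:P2}) only embeds a circle isotropy into a limiting $T^2$, which is no contradiction. So $\{\dim I(G)=1\}$ is not shown to be closed, and connectedness of $\Omega(\tilde M,\Gamma)$ gives nothing. Your closing claim that $\dim I$ can drop only by ``losing the whole $T^2$'' is not supported by (\ref{itm:P2}).
\item You do not verify the hypothesis of Corollary \ref{cor:critical-scaling}(2) for a neighbourhood $\mathcal U$ of the $\dim I=2$ property-$(P)$ cones. Nothing prevents a cone in $\overline{\mathcal U}$ from having one-dimensional isotropy. By (\ref{itm:P1}) all its tangent cones then do too, so nothing returns to $\mathcal U$.
\item The contradiction mechanism you sketch fails. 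For a fixed $\gamma$ of infinite order, the limit of $\spa\gamma$ contains a closed $\R$-subgroup by Lemma \ref{lem:freesplits}, so it is never a compact circle. For varying $\gamma_i$, the groups $\spa{\gamma_i}$ are infinite and the quotients collapse, so Lemma \ref{lem:noncodim3}, which needs finite groups and noncollapsed quotients, is unavailable.
\item Worse, a circle isotropy with a codimension-two fixed set is not contradictory at all. Take $M=\R^5/\spa\gamma$ with $\gamma(u,v,t)=(R_\alpha u,v,t+1)$, where $u,v\in\R^2$ and $\alpha\notin 2\pi\Q$. Every cone has $I(G)=S^1$ fixing $\{0\}\times\R^3$, which is exactly your configuration.
\end{itemize}

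The missing idea is to use the contradiction hypothesis, which your plan never invokes, and to track a fixed point rather than only $\dim I$. Suppose some $G_1\cdot y_1$ is not Euclidean. Then by Lemma \ref{lem:dimIG}(1) its span has dimension at least $2$, so $I(G_1)$ fixes an $\R^2$-factor and $Y_1\cong C(S^1*W)$ with $W\cong S^2$. Hence $\dim I(G_1)=1$ and $I(G_1)_0$ fixes a point of the cross-section.

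The paper then applies critical scaling to the class $\Omega_K$ of cones with $\dim I=1$, with $I(G)_0$ fixing a point of $Z$, and with isotropy $\cong K$ of minimal component count. The fixed point is what excludes $T^2$:
\begin{itemize}
\item With constant component count the identity components converge. A two-dimensional limit would therefore be a $T^2$ fixing a point of a $3$-manifold, contradicting Lemma \ref{lem:T2-no-fixed-points}.
\item Nearby cones inherit a fixed point via the homeomorphisms of \cite{BPS24} and Lemma \ref{lem:S^1fixedpt}.
\end{itemize}
Together these give the gap Lemma \ref{lem:gap}, hence $\Omega(\tilde M,\Gamma)=\Omega_K$. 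So $\dim I\equiv1$, property $(P)$ holds everywhere, and Proposition \ref{prop:Pimpliesconnectedorbit} contradicts the non-Euclidean orbit.

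Note that $\dim I\equiv1$ is derived only under the contradiction hypothesis. Constancy of $\dim I$ in general, which is your target, is never established in the paper.
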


We argue by contradiction; assume that there exists $(Y_1,y_1,G_1)\in\Omega(\tilde M,\Gamma)$ such that $G_1\cdot y_1$ is not isometric to any Euclidean space.

\begin{claim}\label{claim:iso-fixed}
	Such a triple $(Y_1,y_1,G_1)$ satisfies $\dim I(G_1)=1$, and the fixed point set of the identity component $I(G_1)_0$ on $CS(Y_1)$ has positive dimension.
\end{claim}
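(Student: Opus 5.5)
We argue by contradiction in each of the two assertions, using the structure from Lemma \ref{lem:dimIG}: $Y_1\cong C(S^0*Z)$ with $Z$ a spherical $3$-manifold, $I(G_1)$ fixing $S^0$ and acting effectively on $Z$. The guiding observation is that if the fixed point set of $I(G_1)_0$ on $CS(Y_1)=S^0*Z$ is only $S^0$, then $I(G_1)_0$ acts on $Z$ without fixed points. Our aim is to show that such an action forces the central orbit to be Euclidean. Since $(Y_1,y_1,G_1)$ is a bad cone, Propositions \ref{prop:isotropy-criteria} and \ref{prop:Pimpliesconnectedorbit} show that $\dim I(G)$ is not constant on $\Omega(\tilde M,\Gamma)$. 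Hence both values $1$ and $2$ occur, and by connectedness of $\Omega$ cones with either dimension are adjacent to each other.

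\textbf{Step 1: $\dim I(G_1)=2$ is impossible.} Suppose $\dim I(G_1)=2$, so $I(G_1)_0\cong T^2$ acts effectively on $Z$. By the classification of effective $T^2$-actions on spherical $3$-manifolds (lens spaces $L(p,q)$ or $S^3$, with no fixed points and exactly two circle orbits as singular orbits), the $T^2$ is essentially the maximal torus of $\Isom(Z)$. We then use \ref{itm:P2}: near $(Y_1,y_1,G_1)$, every cone has isotropy injecting into $I(G_1)$. Conversely, by semicontinuity, cones close to one with $\dim I=2$ have $\dim I\le 2$. The set $\mathcal U$ of cones that are $\eta$-close to cones with $\dim I=2$ and property $(P)$ should be stable under taking tangent cones at the reference point, by \ref{itm:P1} and \ref{itm:P3}. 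Corollary \ref{cor:critical-scaling}(2) then shows that every cone has a tangent cone in $\mathcal U$, and by \ref{itm:P1} every cone has $\dim I(G)=2$. This contradicts the nonconstancy of $\dim I(G)$. The delicate point is the gap statement: a limit of cones with two-dimensional isotropy has its isotropy containing $T^2$, and nearby isotropies are subgroups. Maximality of the $T^2$ in $\Isom(Z)$ is what makes the dimension rigid. Consequently $\dim I(G_1)=1$.

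\textbf{Step 2: the fixed set of $I(G_1)_0\cong S^1$ has positive dimension.} Suppose instead that $S^1$ acts on $Z$ without fixed points, so it is a Seifert-type locally free action. Then every element of $I(G_1)$ close to the identity fixes only $\mathrm{span}_\R(G_1\cdot y_1)$-directions of $Y_1$. We blow up at $y_1$. The tangent cone has property $(P)$ and isomorphic isotropy by \ref{itm:P1}, and its central orbit is $\mathrm{span}_\R(G_1\cdot y_1)$. We then compare this tangent cone with a cone of isotropy dimension $2$ via the critical scaling principle (Corollary \ref{cor:critical-scaling}). The intended mechanism is as follows. Along a path from a $\dim I=1$ free-circle configuration to a $\dim I=2$ configuration, the $T^2$ obtained in the limit must contain a circle acting freely on $Z$, and by \ref{itm:P2} this circle is the limit of the $S^1$'s. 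But in a $T^2$-action on a lens space, the extra circle direction arises from translations degenerating into isotropy. This degeneration requires the central orbit to lose dimension through a lattice direction, and a non-Euclidean orbit $V\oplus\Lambda$ with $\Lambda\neq0$ arises exactly this way. We expect this to force a fixed circle in $Z$ for the circle component lying in the one-dimensional isotropy. That contradicts the assumed fixed-point-freeness, so the fixed set of $I(G_1)_0$ must contain a circle of $Z$, hence has positive dimension in $CS(Y_1)$.

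\textbf{Main obstacle.} We expect Step 2 to be the hardest part: linking the failure of the orbit to be Euclidean (a lattice part $\Lambda$ in \eqref{eq:central-orbit}) to the geometry of the $S^1$-action on $Z$. We would first try a critical rescaling chosen at the scale where the lattice generator becomes isotropy. In the limit, this produces an element of the isotropy that is a limit of translations. Then, using abelianness of $G$, we would show that this element commutes with $I(G_1)_0$ and together they span a $T^2$ whose singular orbits give the fixed circle.
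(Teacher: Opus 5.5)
\textbf{There is a genuine gap.} Neither step is carried out, and you miss the elementary observation that settles the claim directly.

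Step 1 rests on a gap statement: cones $\eta$-close to a property-$(P)$ cone with $\dim I=2$ also have $\dim I=2$. You flag this as delicate but do not prove it, and it does not follow from (\ref{itm:P2})--(\ref{itm:P3}). Those facts only give monomorphisms of nearby isotropies \emph{into} the limiting isotropy. Circles of increasingly steep slope in $T^2$ Hausdorff-converge to all of $T^2$, so $\dim I$ can drop under small perturbation. This is exactly why the paper, in Lemma \ref{lem:gap}, restricts to the class defined by (\ref{itm:c1})--(\ref{itm:c2}) and uses Lemma \ref{lem:T2-no-fixed-points} to exclude a $T^2$ limit. Appealing to ``maximality of the $T^2$ in $\Isom(Z)$'' does not help, since $Z$ and its metric vary from cone to cone. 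Step 2 is only a heuristic (``we expect this to force a fixed circle''), and no mechanism is given that actually produces a fixed point.

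\textbf{The missing idea.} The argument is local to $(Y_1,y_1,G_1)$ and needs no critical scaling.
By Lemma \ref{lem:dimIG}(1), $G_1$ contains a closed $\R$-subgroup. It acts freely, so its orbit through $y_1$ is a closed embedded copy of $\R$ lying in $\mathrm{span}_\R(G_1\cdot y_1)\supset\R\times\{o_Z\}$.

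If this span were $1$-dimensional, the orbit would be squeezed between that line and the line itself. Hence $G_1\cdot y_1=\R\times\{o_Z\}$, which is isometric to $\R$ and contradicts the choice of $(Y_1,y_1,G_1)$. So $\dim\mathrm{span}_\R(G_1\cdot y_1)\ge 2$.

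Since $G_1$ is abelian, $I(G_1)$ fixes $G_1\cdot y_1$ and hence this span pointwise. Thus $Y_1\cong C(S^1*Z)$, with $I(G_1)$ fixing $S^1$ and acting effectively on a $2$-dimensional $Z$, which is homeomorphic to $S^2$ by \cite{BPS24}.

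An abelian compact Lie group acting effectively on $S^2$ has dimension at most $1$. Combined with the standing assumption $\dim I(G)\ge1$, this gives $\dim I(G_1)=1$. The fixed set of $I(G_1)_0$ in $CS(Y_1)$ already contains the circle $S^1$. In addition, a circle action on $S^2$ has a fixed point, which is the formulation used later via Remark \ref{rem:fixpt}.
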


\begin{proof}
	By Lemma \ref{lem:dimIG} (1), if $\dim\mathrm{span}_\R(G_1\cdot y_1)=1$, then
	$$\R\times\{o_Z\}=\mathrm{span}_\R(G_1\cdot y_1)\supset G_1\cdot y_1\supset\R\cdot y_1,$$
	which forces $G_1\cdot y_1=\R\times\{o_Z\}$, contradicting the choice of $(Y_1,y_1,G_1)$. Hence $\dim\mathrm{span}_\R(G_1\cdot y_1)\ge2$. Thus $Y_1$ splits off an $\R^2$-factor on which $I(G_1)$ acts trivially. Consequently $Y_1$ is isometric to $C(S^1*Z)$ and $I(G_1)$ fixes $S^1$ and acts effectively on $Z$. By \cite{BPS24}, $Z$ is homeomorphic to $S^2$. This yields $\dim I(G_1)=1$, and since $I(G_1)_0$ fixes some point of $Z$, its fixed point set on $CS(Y_1)$ has positive dimension.
\end{proof}

We fix an isotropy type $K$ that minimizes the number of components of $I(G)$ among the triples $(Y,y,G)\in\Omega(\tilde M,\Gamma)$ satisfying the following:

\customitemize{c}
	\item \label{itm:c1}$\dim I(G)=1$;
	\item \label{itm:c2} the fixed point set of $I(G)_0$ on $CS(Y)$ has positive dimension.
\end{enumerate}
Let $\Omega_K$ be the subcollection of $(Y,y,G)\in\Omega(\tilde M,\Gamma)$ satisfying (\ref{itm:c1}) and (\ref{itm:c2}) with $I(G)\cong K$. By Claim \ref{claim:iso-fixed}, $\Omega_K\neq\emptyset$.
\begin{rem}\label{rem:fixpt}
For $(Y,y,G)\in\Omega(\tilde M,\Gamma)$, in the isometric decomposition $Y\cong C(S^0*Z)$ of Lemma \ref{lem:dimIG}, condition (\ref{itm:c2}) is equivalent to the requirement that $I(G)_0$ fixes some point of $Z$. Under the hypotheses of Theorem \ref{thm:main-codim5}, condition (\ref{itm:c2}) must be strengthened to: the fixed point set of $I(G)_0$ on $CS(Y)$ has dimension greater than $n-5$.
\end{rem}

\begin{lem}\label{lem:gap}
	There exists $\eta>0$ such that if $(Y,y,G)\in\Omega_K$ and $(X,x,H)\in\Omega(\tilde M,\Gamma)$ satisfy
	\begin{itemize}
		\item $d_{GH}\bigl((Y,y,G),(X,x,H)\bigr)\le\eta$,
		\item $(Y,y,G)$ satisfies property $(P)$,
	\end{itemize}
	then $(X,x,H)\in\Omega_K$.
\end{lem}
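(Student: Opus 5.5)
We argue by contradiction, in the spirit of the gap argument in the proof of Proposition \ref{prop:isotropy-criteria}. If no such $\eta$ exists, we find $(Y_i,y_i,G_i)\in\Omega_K$ with property $(P)$ and $(X_i,x_i,H_i)\in\Omega(\tilde M,\Gamma)\setminus\Omega_K$ whose equivariant distance tends to $0$. Since $\Omega(\tilde M,\Gamma)$ is compact and closed in the equivariant Gromov--Hausdorff topology, after a subsequence both sequences converge to a common limit $(Y,y,G)\in\Omega(\tilde M,\Gamma)$.

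By (\ref{itm:P3}), since each $(Y_i,y_i,G_i)$ has property $(P)$, we have $(CS(Y_i),I(G_i))\GH(CS(Y),I(G))$. By (\ref{itm:P2}), for large $i$ there are Lie group monomorphisms $I(G_i)\cong K\hookrightarrow I(G)$ which are $\epsilon_i$-approximations. Similarly, $I(H_i)$ embeds as Lie group monomorphisms into a subgroup $K'\subset I(G)$.

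The next step is to show $I(G)\cong K$ and $I(G)\in$ condition (\ref{itm:c1})--(\ref{itm:c2}), i.e. $(Y,y,G)\in\Omega_K$. For the dimension, $\dim I(G)\ge\dim K=1$, and the standing assumption of Proposition \ref{prop:free-abelian-fg} gives $\dim I(G)\le2$. To exclude $\dim I(G)=2$, note that $I(G)_0$ would then be $T^2$ acting effectively on the spherical $3$-manifold $Z$. The circles $I(G_i)_0$ have fixed points $z_i\in Z_i$, and these converge to a point $z\in Z$ fixed by the limit circle $S^1\subset T^2$. We then argue that the full $I(G)$ is the Gromov--Hausdorff limit of $I(G_i)$ of dimension one, contradicting $\dim 2$, since GH-convergence of compact groups via monomorphisms with $\epsilon_i$-approximation forces equal dimension in the limit. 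Hence $I(G)_0$ is a circle, equal to the limit of $I(G_i)_0$, and fixes $z$, so (\ref{itm:c2}) holds. The monomorphism $K\hookrightarrow I(G)$ is then onto on identity components. The minimality of the number of components of $K$ among groups satisfying (\ref{itm:c1})--(\ref{itm:c2}) forces this map to be an isomorphism. Thus $(Y,y,G)\in\Omega_K$.

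Now we turn to the $X_i$ side. By (\ref{itm:P2}), $I(H_i)$ embeds into $I(G)\cong K$. We must show that $I(H_i)\cong K$ and that (\ref{itm:c2}) holds for $H_i$. The standing assumption gives $\dim I(H_i)\ge1$, so $I(H_i)_0$ is a circle mapping onto $I(G)_0$. The fixed point condition (\ref{itm:c2}) should follow because $I(H_i)_0$ is close to $I(G)_0$, which fixes a point. A circle action close to one with fixed points on a $3$-manifold has fixed points, since it has isotropy of large order near $z$, and on a spherical $3$-manifold only fixed circles or exceptional orbits can occur. This step is the main obstacle: either show that exceptional orbits of bounded-away order cannot approximate fixed points, or instead use Remark \ref{rem:fixpt} and the structure of $T^1$-actions on spherical manifolds, where nonfixed orbits have finite isotropy of bounded order. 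Once (\ref{itm:c2}) is established, the minimality of $K$ makes the injection $I(H_i)\hookrightarrow K$ onto, so $(X_i,x_i,H_i)\in\Omega_K$. This contradicts the choice of $(X_i,x_i,H_i)$.
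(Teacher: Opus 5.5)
Your proposal has two genuine gaps.

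\textbf{The case $\dim I(G)=2$.} Your exclusion of this case rests on a false principle. Convergence of compact groups through monomorphisms that are $\epsilon_i$-approximations does \emph{not} preserve dimension: the circles $\{(t,nt)\}\subset T^2$ are $\epsilon_n$-dense and converge to $T^2$, just as $\Z/n$ converges to $S^1$. Here the dimension does jump in the limit of the identity components. Since $\#\pi_0(I(G_i))=\#\pi_0(K)$ is fixed and, by (\ref{itm:P3}), $I(G_i)$ converges to all of $I(G)$, the Hausdorff limit $L$ of $\phi_i(I(G_i)_0)$ is a closed subgroup of finite index. Hence $L\supset I(G)_0\cong T^2$, not a circle. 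Your fixed points $z_i$ then converge to a point of $Z$ fixed by the whole $T^2$, which contradicts Lemma \ref{lem:T2-no-fixed-points}. This is the paper's Case 2: it is condition (\ref{itm:c2}), not a dimension count, that rules out the jump.

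There is a smaller error in the same step. When $\dim I(G)=1$, minimality of $K$ cannot give surjectivity of $K\hookrightarrow I(G)$. It only yields $\#\pi_0(I(G))\ge\#\pi_0(K)$, the same inequality that injectivity already gives. Surjectivity must come from the $\epsilon_i$-density in (\ref{itm:P3}): a closed subgroup of full dimension that is $\epsilon_i$-dense must meet every component. Minimality is correctly usable only on the $X_i$ side.

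\textbf{The fixed-point step on the $X_i$ side.} The step you flag as the main obstacle is the heart of the lemma, and neither suggested route closes it. Two ingredients are missing.
\begin{enumerate}
\item You need a common space on which to compare the actions. This is provided by the homeomorphic Gromov--Hausdorff approximations $f_i\colon Z_i\to Z_\infty$ of \cite[Theorem 1.11]{BPS24}. They transport $I(H_i)_0$ to circle actions $\phi_i$ on the fixed manifold $Z_\infty$ converging uniformly to the action $\phi$ of $I(G)_0$.
\item You need fixed-point stability, Lemma \ref{lem:S^1fixedpt}. Palais averaging gives equivariant maps $\psi_i\colon(Z_\infty,\phi_i)\to(Z_\infty,\phi)$ homotopic to the identity. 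If $\phi_i$ had no fixed point, one chooses, for that fixed $i$, an odd prime $p$ exceeding all isotropy orders of $\phi_i$. Then $C_p<S^1$ has no fixed point under $\phi_i$ but fixes $z$ under $\phi$. This contradicts \cite{HankePuppe06}, which forces a degree-one $C_p$-equivariant map to be surjective on $C_p$-fixed sets.
\end{enumerate}
Your observation that non-fixed orbits have isotropy of bounded order holds for each action separately, with no bound uniform in $i$. It therefore cannot prevent exceptional orbits of growing order from accumulating at the fixed point. Likewise, small orbits near $z$ do not by themselves force large isotropy. The paper avoids both issues by choosing $p$ only after fixing $i$.
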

\begin{proof}
	Argue by contradiction. Assume $(Y_i,y_i,G_i)\in\Omega_K$ and $(X_i,x_i,H_i)\in\Omega(\tilde M,\Gamma)$ satisfy the following.
	\begin{itemize}
		\item $d_{GH}\bigl((Y_i,y_i,G_i),(X_i,x_i,H_i)\bigr)\le\eta_i\to0$,
		\item $(Y_i,y_i,G_i)$ satisfies property $(P)$,
		\item $(X_i,x_i,H_i)\notin\Omega_K$.
	\end{itemize}
	Up to a subsequence, we assume $(X_i,x_i,H_i)$ and $(Y_i,y_i,G_i)$ both converge to $(Y_\infty,y_\infty,G_\infty)$ in the equivariant Gromov--Hausdorff sense.
	
	By facts (\ref{itm:P2}) and (\ref{itm:P3}),
	$$
	(CS(Y_i),I(G_i))\GH(CS(Y_\infty),I(G_\infty)),\qquad
	(CS(X_i),I(H_i))\GH(CS(Y_\infty),K'),
	$$
	where $K'$ is a closed subgroup of $I(G_\infty)$. And there are injective homomorphisms $\phi_i:I(G_i)\to I(G_\infty)$ and $\psi_i:I(H_i)\to K'$ which are $\epsilon_i$-Gromov--Hausdorff approximations for some $\epsilon_i\to0$.
	
	\textbf{Case 1:} $\dim I(G_\infty)=1$.
	
	In this case, $\phi_i(I(G_i))$ is a closed subgroup of $I(G_\infty)$ of the same dimension, and it is $\epsilon_i$-dense in $I(G_\infty)$. Hence, for large $i$, $\phi_i$ is surjective, so $I(G_\infty)\cong I(G_i)$. Since $(Y_i,y_i,G_i)\in\Omega_K$, we have $I(G_i)\cong K$, and hence $I(G_\infty)\cong K$. By Lemma \ref{lem:dimIG}, for each $i$ there is an isometry $CS(Y_i)\cong S^0*W_i$ under which $I(G_i)$ acts effectively on $W_i$ and trivially on $S^0$. Passing to a subsequence, we may assume $W_i\GH W_\infty$, so that $CS(Y_\infty)\cong S^0*W_\infty$. By Remark \ref{rem:fixpt}, $I(G_i)_0$ fixes $S^0$ and some point of $W_i$. Together with the convergence $I(G_i)_0\GH I(G_\infty)_0$, this implies that $I(G_\infty)_0$ fixes $S^0$ and some point of $W_\infty$ as well; hence $(Y_\infty,y_\infty,G_\infty)\in\Omega_K$.

For the sequence $(X_i,x_i,H_i)$, since $\psi_i$ is injective, $I(H_i)$ is a closed subgroup of $I(G_\infty)\cong K$. In particular, $1\le \dim I(H_i)\le \dim K=1$. So $\psi_i(I(H_i)_0)=I(G_\infty)_0$, which implies that $I(H_i)_0\GH K'_0=I(G_\infty)_0$. Together with Lemma \ref{lem:dimIG}, up to a subsequence, we may choose isometries from $CS(X_i)$ and $CS(Y_\infty)$ to $S^0*Z_i$ and $S^0*Z_\infty$ respectively, satisfying $Z_i\GH Z_\infty$, where $Z_i,Z_\infty$ are spherical $3$-manifolds and $\mathrm{RCD}(2,3)$-spaces, and $I(H_i)_0,I(G_\infty)_0$ act effectively on $Z_i,Z_\infty$ and fix their $S^0$-factor. By \cite[Theorem 1.11, Remark 1.12]{BPS24}, there is a homeomorphism $f_i\colon Z_i\to Z_\infty$ which is an $\epsilon_i'$-Gromov--Hausdorff approximation, with $\epsilon_i'\to0$. Composing $f_i$ with an isometry of $Z_\infty$ if necessary, we may assume that $(f_i,\psi_i)$ is an $\epsilon_i'$-equivariant Gromov--Hausdorff approximation. Hence the push-forward action $f_i\circ I(H_i)_0\circ f_i^{-1}$ on $Z_\infty$ is $\epsilon_i'$-close to the $I(G_\infty)_0$-action and therefore converges uniformly to it. By the previous paragraph and Remark \ref{rem:fixpt}, $I(G_\infty)_0$ fixes some point of $Z_\infty$. Lemma \ref{lem:S^1fixedpt} then shows that, for large $i$, the action $f_i\circ I(H_i)_0\circ f_i^{-1}$ has a fixed point on $Z_\infty$; equivalently, $I(H_i)_0$ fixes a point of $Z_i$. Hence $(X_i,x_i,H_i)$ satisfies conditions (\ref{itm:c1}) and (\ref{itm:c2}). By the minimality of $K$ and $I(H_i)\hookrightarrow K$, $I(H_i)\cong K$, a contradiction.

\textbf{Case 2:} $\dim I(G_\infty)=2$.

We have $\phi_i(I(G_i)_0)\subset I(G_\infty)_0\cong T^2$. Since the number of components $\#\pi_0(I(G_i))$ is constant (it equals $\#\pi_0(K)$), the subgroups $\phi_i(I(G_i)_0)$ converge to $I(G_\infty)_0$. As $I(G_i)_0$ fixes some point of $W_i$, its limit $I(G_\infty)_0$ fixes some point of $W_\infty$. However, by Lemma \ref{lem:T2-no-fixed-points}, an effective $T^2$-action on a connected $3$-manifold has no fixed point, a contradiction.
\end{proof}
Let $\Omega_K^{P}$ be the collection of elements of $\Omega_K$ satisfying property $(P)$; by (\ref{itm:P1}) it is nonempty. Define $\mathcal U:=B_{0.5\eta}(\Omega_K^P)$, where $\eta$ is as in Lemma \ref{lem:gap}. Together, Lemma \ref{lem:gap} and (\ref{itm:P1}) show that condition (2) of Corollary \ref{cor:critical-scaling} applies to $\mathcal U$. Hence, for every $B\in\Omega(\tilde M,\Gamma)$, there exists $a\in[1,+\infty)$ such that $a^{-1}B\in\overline{\mathcal U}$. Applying Lemma \ref{lem:gap} once more, we obtain $a^{-1}B\in\Omega_K$, hence $B\in\Omega_K$. That is, $\Omega(\tilde M,\Gamma)=\Omega_K$. Propositions \ref{prop:isotropy-criteria} and \ref{prop:Pimpliesconnectedorbit} then imply that for some $m\in[0,n]$, for every $(Y,y,G)\in\Omega(\tilde M,\Gamma)$, $G\cdot y$ is isometric to $\R^m$. This contradicts the existence of $(Y_1,y_1,G_1)$.

Now the proof of Theorem \ref{thm:free-abelian-fg} is complete.

\subsection{$\pi_1(M)$ is torsion}\label{sec:torsional}
In this section, we prove the following theorem:
\begin{thm}\label{thm:torsion}
	Let $M$ be as in Theorem \ref{thm:main}. Further assume $\pi_1(M)$ is abelian and torsion. Then the same conclusion as Theorem \ref{thm:free-abelian-fg} holds. In particular, $\pi_1(M)$ is finite.
\end{thm}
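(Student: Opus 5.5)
We plan to reduce Theorem \ref{thm:torsion} to showing that every equivariant asymptotic cone has trivial central orbit, i.e.\ $G\cdot y=\{y\}$ for all $(Y,y,G)\in\Omega(\tilde M,\Gamma)$, $\Gamma:=\pi_1(M)$. Since $\Gamma$ is torsion abelian, it contains no $\Z$, so $G$ should be compact. The identity component $G_0$ is then a torus. If $G\cdot y$ is a point for every cone, then Lemma \ref{lem:short-gen-cone} gives finite generation, and a finitely generated torsion abelian group is finite. Finiteness in turn makes every central orbit trivial, so the conclusion ``$G\cdot y\cong\R^0$'' of Theorem \ref{thm:free-abelian-fg} holds. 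The key reduction: if $\Gamma$ is not finitely generated, then by Lemma \ref{lem:short-gen-cone} some cone has disconnected, hence nontrivial, central orbit. Rescaling, and using that $\Gamma$ is an increasing union of finite subgroups, we expect to find finite subgroups $\Gamma_i<\Gamma$ and scales $r_i\to0$ with $(r_i\tilde M,\tilde p,\Gamma_i)$ converging to a cone $(Y,y,G)$ in which the orbits of the $\Gamma_i$ at $\tilde p$ do not collapse.

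First we determine the structure of the cones. Because $G$ fixes $y$ only up to a compact orbit inside the Euclidean factor, and $\tilde M$ is noncollapsed of dimension $5$, Lemma \ref{lem:noncodim3} forbids a cone of the form $\R^2\times C(\cdot)$ on which nontrivial shrinking finite normal subgroups act. Combining this with the classification of \cite{BPS24}, as in Lemma \ref{lem:dimIG}, we aim to produce $(Y,y,G)\in\Omega$ with $Y\cong\R\times C(Z)$. Here $Z$ is a spherical $3$-manifold, and $I(G)$ fixes the $\R$-factor and acts effectively on $Z$. Property $(P)$ can be arranged by (\ref{itm:P1}), and the isotropy criterion, Proposition \ref{prop:isotropy-criteria}, handles the case where some cone has finite isotropy. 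So we may assume $\dim I(G)\ge1$ throughout, and $I(G)_0$ is a torus $T^1$ or $T^2$ acting effectively on $Z$.

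The main step is to find $z_0\in Z$ and a nontrivial $\gamma_\infty\in I(G)$ fixing $z_0$ that is realized as a limit of cyclic groups $\spa{\gamma}$, $\gamma\in\Gamma$, at some scales $r_i\to0$. If $I(G)_0=T^1$ has a fixed point, or more generally a nontrivial finite isotropy subgroup at some $z_0$, we take $\gamma_\infty$ in that stabilizer. Since $\Gamma$ is torsion, and its elements are dense in the limit in the appropriate sense, a limit of cyclic groups can be chosen to hit a prescribed finite-order element: roots of unity are dense in $T^k$. If instead $I(G)_0$ acts freely on $Z$, we plan to use orientability of noncollapsed Ricci limits \cite{BrBrPi24}. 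Then $Z/T^1$ is an orientable Seifert base, and effective circle actions on spherical space forms have exceptional orbits or fixed points. The exception is a free Hopf-type action on a lens space, and there we would look instead at the component group $I(G)/I(G)_0$ acting on the quotient. This case analysis is the step I expect to be the main obstacle.

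Given $\gamma_\infty$, we choose $\gamma$ and $r_i$ as above. Then $\gamma_\infty$ fixes the half-plane $\R\times\R_+\times\{z_0\}$. We blow up slowly around points $q_i\to(0,1,z_0)$ so that $(s_i r_i\tilde M,q_i,\spa{\gamma})$ converges to the tangent cone $T_{(0,1,z_0)}Y\cong\R^2\times C(\Sigma)$, on which the limit of $\spa\gamma$ acts. The subgroups $\spa\gamma$ are finite and normal since $\Gamma$ is abelian, and their orbits at $q_i$ shrink to $0$. Moreover the limit splits off $\R^{2}=\R^{n-3}$. Lemma \ref{lem:noncodim3} then forces $\spa\gamma$ to be trivial for large $i$, contradicting $\gamma_\infty\neq\id$. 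This shows every central orbit is trivial, which completes the proof of Theorem \ref{thm:torsion}.
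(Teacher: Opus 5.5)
Your outer frame matches the paper: rule out a disconnected central orbit, find a nontrivial $\gamma$ whose limit cyclic group fixes a half-plane, blow up slowly at $(0,1,z_0)$, and apply Lemma \ref{lem:noncodim3}. The step you flag as the main obstacle, however, is a genuine gap, and the route you sketch for it does not work. A point $z_0$ with nontrivial finite (or circle) isotropy is not enough. You need a \emph{fixed} $\gamma\in\Gamma$ with $\spa{\gamma}\GH\spa{\gamma_\infty}$ exactly and $\spa{\gamma_\infty}$ fixing $z_0$, since the orbits of $\spa\gamma$ at $x_i\to(0,1,z_0)$ must shrink to $0$.

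Lemma \ref{lem:torsion-embedding} does give an injective $\iota\colon\Gamma\to I(G)$ with this convergence property. But $\iota(\Gamma)\cap I(G)_0$ is just some infinite torsion subgroup. It can miss a prescribed finite stabilizer entirely: for instance, the $3$-power roots of unity meet $\Z_2\subset S^1$ trivially. Likewise, a torsion subgroup of $T^2$ can avoid both isotropy circles. Density of roots of unity only puts $\iota(\gamma)$ \emph{near} the stabilizer. Then $\spa{\iota(\gamma)}$ contains a small nontrivial rotation, its orbit through $z_0$ is not small, and Lemma \ref{lem:noncodim3} does not apply. For free circle actions (Hopf-type on lens spaces) and for $T^2$, you have no actual argument. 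There is also an unjustified earlier step: ``$\Gamma$ has no $\Z$, so $G$ is compact'' is false a priori, since limits of finite groups can produce orbits $\Z a$ or $\R$. The paper excludes these only through Lemma \ref{lem:1dimspanG}, Lemma \ref{lem:1dim}, and the critical-scaling argument of Claim \ref{claim:orbit-type}.

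The missing idea is to exploit the exact shape of the bad cone. That claim shows the disconnected orbit consists of exactly two points swapped by some $\rho\in G$, which acts as a reflection on the $\R$-factor. Since $G$ is abelian, $I(G)$ fixes both points and hence the whole line. After passing to an index-$2$ subgroup, $\Gamma$ preserves orientation. By stability of orientation under noncollapsed limits (Lemma \ref{lem:orientation}), so does $\rho$, and therefore $\rho$ \emph{reverses} the orientation of $Z$. Because $\rho$ centralizes the torus $I(G)_0$, the equivariant Lefschetz identity $L(\rho|_{Z^{I(G)_0}})=L(\rho)=2$ (Lemma \ref{lem:centralizers}) produces a global fixed point $z_0$ of $I(G)_0$. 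Combined with Lemma \ref{lem:T2-no-fixed-points}, this also rules out $T^2$. Now \emph{every} element of $I(G)_0$ fixes $z_0$. Injectivity of $\iota$ together with finiteness of $\pi_0(I(G))$ then gives a nontrivial $\gamma$ with $\iota(\gamma)\in I(G)_0$, and your final blow-up step goes through.
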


We need the following lemma from Huang-Huang \cite[Lemma 3.2]{HH25a}.
\begin{lem}\label{lem:noncodim3}
	Suppose $(M^n_i,p_i)\GH(\R^{n-3}\times C(Z),(0^{n-3},o_Z))$, where $\Ric_{M_i}\ge0$ and $\vol B_1(p_i)\ge v>0$. If $(\hat M_i,\hat p_i)\to (M_i,p_i)$ are normal covers with deck transformation groups $F_i$ and $\diam(F_i\cdot \hat p_i)\to0$, then $F_i$ is trivial for all sufficiently large $i$.
\end{lem}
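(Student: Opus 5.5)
The plan is to first bound $\#F_i$ by a volume comparison, then pass to an equivariant limit. In the limit, the complement of the singular axis is simply connected and the cover is unbranched there, which forces the limiting group to be trivial. Finally, nontriviality would have to be detected at the level of $F_i$.

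\emph{Step 1 (bounded order).} Put $\delta_i:=\diam(F_i\cdot\hat p_i)\to0$. The full preimage of $B_r(p_i)$ consists of $\#F_i$ isometric translates of a fundamental domain, and it is contained in $B_{r+\delta_i}(\hat p_i)$. Hence
$$\#F_i\,\vol B_r(p_i)\le\vol B_{r+\delta_i}(\hat p_i)\le\omega_n(r+\delta_i)^n.$$
By Bishop--Gromov, $\vol B_r(p_i)\ge v r^n$ for $r\le1$. Taking $r=1$ gives $\#F_i\le 2^n\omega_n/v$ for large $i$. Passing to a subsequence, we may assume $F_i\cong F$ for a fixed finite group $F$. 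By contradiction we may also assume each $F_i$ contains an element $\gamma_i$ of a fixed prime order.

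\emph{Step 2 (the limit).} The covers $\hat M_i$ have $\Ric\ge0$ and are noncollapsed, so after passing to a subsequence
$$(\hat M_i,\hat p_i,F_i)\GH(\hat Y,\hat y,F_\infty)\quad\text{with}\quad \hat Y/F_\infty=\R^{n-3}\times C(Z).$$
Here $F_\infty$ fixes $\hat y$ because $\delta_i\to0$. The cross-section $Z$ is a noncollapsed $2$-dimensional $\mathrm{RCD}(1,2)$ space, hence an Alexandrov surface with curvature $\ge1$, so $Z$ is $S^2$ or $\R P^2$. Orientability of noncollapsed Ricci limits \cite{BrBrPi24} excludes $\R P^2$. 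Consequently $U:=Y\setminus(\R^{n-3}\times\{o_Z\})\cong\R^{n-3}\times(0,\infty)\times S^2$ is simply connected.

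\emph{Step 3 (sheets over $U$).} Take a compact, simply connected, Reifenberg-flat region $A\subset U$, for instance $\R^{n-3}$-balls times an annulus times $S^2$, intersected with $B_2(y)$. By Cheeger--Colding, the corresponding regions $A_i\subset M_i$ are homeomorphic to $A$ for large $i$ \cite{BPS24}, hence simply connected. Therefore the preimage of $A_i$ in $\hat M_i$ is a disjoint union of $\#F$ isometric sheets, freely permuted by $F_i$. I would argue that distinct sheets stay a definite distance $c(A)>0$ apart uniformly in $i$. The tool is local almost-Euclidean structure: an $\epsilon$-ball around a point of $A_i$ lifts isometrically, so a deck transformation moving a point by less than $\epsilon$ must be the identity. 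It follows that $\gamma_i$ converges to a nontrivial element of $F_\infty$ acting freely over $U$. Moreover, the preimage of $U$ in $\hat Y$ is a union of $\#F\ge2$ pairwise disjoint open copies of $U$, since over the simply connected $U$ the branched cover is an honest covering.

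\emph{Step 4 (contradiction).} The complement of these sheets in $\hat Y$ is the preimage of $\R^{n-3}\times\{o_Z\}$, a closed set of Hausdorff dimension $\le n-3$. In a noncollapsed $\mathrm{RCD}(0,n)$ space, removing a closed set of codimension at least $2$ does not disconnect the space. This follows from the segment inequality of Cheeger--Colding: almost every pair of points is joined by a geodesic avoiding such a set. So $\hat Y$ minus this set is connected, contradicting the existence of $\ge2$ disjoint open sheets. Hence $F$ is trivial.

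I expect the main obstacle to be Step 3: making the uniform separation of sheets (equivalently, the faithfulness of $F_i\to F_\infty$ over $U$) rigorous. Tiny deck transformations could a priori hide in the near-singular region, so the separation argument must be carried out only over the Reifenberg region $A_i$. It is also there that the identification $Z\cong S^2$, rather than $\R P^2$, is used.
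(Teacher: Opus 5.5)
Your Step 3 has a genuine gap, at the sentence ``over the simply connected $U$ the branched cover is an honest covering.'' That $F_\infty$ acts freely over $U$ is exactly what has to be proved, and your separation argument does not give it.

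An $\epsilon$-ball with almost Euclidean volume ratio at a fixed scale does rule out short deck transformations. This correctly gives faithfulness of $F_i\to F_\infty$ and freeness over the $\epsilon$-regular part of $Y$. It gives nothing along the codimension-$2$ strata $\R^{n-3}\times(0,\infty)\times\{z\}$, $z$ a cone point of $Z$, and these are precisely where $F_\infty$ may a priori have fixed points. Your sample region (ball $\times$ annulus $\times S^2$) is not Reifenberg-flat once $Z$ has a cone point. A genuinely Reifenberg-flat region must avoid these strata, and triviality of the covering over it is compatible with branching along them.

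A test case admitted by the statement is $Y=\R^{n-2}\times C(S^1_{2\pi/k})=\R^n/\Z_k$, i.e.\ $Z=S^0*S^1_{2\pi/k}\cong S^2$. Take the candidate limit $\hat Y=\R^n$ with $F_\infty\cong\Z_k$ rotating a $2$-plane. Everything you actually establish holds here:
\begin{itemize}
\item the order is bounded and $F_i\to F_\infty$ is faithful;
\item sheets are uniformly separated over compact subsets of the flat part $Y\setminus(\R^{n-2}\times\{o\})\cong\R^{n-2}\times(0,\infty)\times S^1$, where the covering is the connected cyclic one;
\item $\pi^{-1}(U)=\R^n\setminus\R^{n-3}$ is connected.
\end{itemize}
So no contradiction arises unless you exclude branching along $\R^{n-2}\times\{o\}$. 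But blowing up at a fixed point on that axis reproduces the hypotheses of the lemma with limit $\R^{n-2}\times C(S^1_{2\pi/k})$. That step is therefore the lemma itself in its codimension-$2$ form, and your reduction is circular.

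What is missing is topological control of the $M_i$ themselves near $p_i$, uniform in $i$ and over all scales from $\delta_i$ up to $1$. Volume and separation estimates cannot detect this. The natural source is the topological regularity/stability theory for noncollapsed limits near $\R^{n-3}\times C(Z)$-points, cf.\ \cite{BPS24}. Note that the paper does not reprove this lemma; it imports it from \cite{HH25a}.

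For instance, suppose that for large $i$ there is a simply connected open set $W_i\supset B_r(p_i)$ with $r>0$ independent of $i$. Then the lemma is immediate and needs no equivariant limit. For $g\in F_i\setminus\{e\}$, a minimal geodesic from $\hat p_i$ to $g\hat p_i$ projects to a loop at $p_i$ inside $B_{\delta_i}(p_i)\subset W_i$. That loop is null-homotopic in $W_i$, so its lift is closed, giving $g\hat p_i=\hat p_i$ and hence $g=e$.

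Alternatively, your scheme could be salvaged if you had GH-close homeomorphisms onto regions of $M_i$ from compact subsets of $U$ that \emph{contain} the codimension-$2$ strata. Then loops in the regular part of $U$ would lift to closed loops in $\hat M_i$, hence in $\hat Y$. Your Step 4 connectedness argument, applied to the regular part of $\hat Y$, would then give the contradiction.

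A minor point: the $M_i$ are not assumed orientable, so in Step 2 you should cite \cite{BPS24} for $Z\cong S^2$ rather than orientability.
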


Now we begin to prove Theorem \ref{thm:torsion}.

\begin{lem}\label{lem:1dimspanG}
	For any $(Y,y,G) \in \Omega(\tilde M,\Gamma)$, the orbit $G\cdot y$ spans at most a $1$-dimensional space.
\end{lem}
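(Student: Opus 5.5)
We argue by contradiction. Suppose some $(Y,y,G)\in\Omega(\tilde M,\Gamma)$ has $\dim\mathrm{span}_\R(G\cdot y)\ge2$. Since $G\cdot y$ lies in the Euclidean factor $\R^k\times\{o_Z\}$, the space $Y$ then splits off an $\R^2$-factor, so $Y\cong\R^2\times C(Z')$ with $\dim C(Z')=3$. Because $\tilde M$ has dimension $5$, this is precisely the $\R^{n-3}\times C(Z)$ situation of Lemma \ref{lem:noncodim3}. The plan is to produce, along a sequence converging to such a cone, nontrivial finite deck groups whose orbits through the base point shrink to $0$. Lemma \ref{lem:noncodim3} will then give the contradiction.

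\textbf{Reductions.} First, by replacing $(Y,y,G)$ by an equivariant tangent cone at $y$, we may assume property $(P)$, using (\ref{itm:P1}). Second, by Remark \ref{rem:P}, replacing the cone by a further rescaling (still in $\Omega(\tilde M,\Gamma)$) turns the central orbit into the linear subspace $\mathrm{span}_\R(G\cdot y)$. This subspace is a $\R^m$ with $m\ge2$ on which $G$ acts by translations. So we may assume that $G$ contains a closed subgroup acting transitively by translations on an $\R^2\subset\R^k$.

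\textbf{Construction of the finite groups.} Take $r_i\to0$ with $(r_i\tilde M,\tilde p,\Gamma)\GH(Y,y,G)$. Fix a slowly decreasing $\delta_i\to0$, and let $F_i<\Gamma$ be the subgroup generated by the elements $\gamma$ with $d(\gamma\tilde p,\tilde p)\le\delta_i/r_i$ in the unrescaled metric. By proper discontinuity only finitely many elements qualify. A finitely generated torsion abelian group is finite, so $F_i$ is a finite normal subgroup of the abelian group $\Gamma$. We need two facts about $F_i$:
\begin{itemize}
\item[(a)] $F_i$ is nontrivial for large $i$. Points of $G\cdot y$ arbitrarily close to $y$ are approximated by elements of $\Gamma$ with small displacement, and these cannot all be trivial because the orbit is a plane.
\item[(b)] $\diam(F_i\cdot\tilde p)\to0$ after a suitable second rescaling.
\end{itemize}

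\textbf{Main obstacle.} Fact (b) is where the real difficulty lies. Products of many small-displacement elements can travel far, and in the limit the group generated by small elements is roughly $G_0$, whose orbit is the whole plane. To handle this, I would first try a critical-scale choice in the spirit of Proposition \ref{prop:critical-scaling}. Let $\lambda_i$ be the largest scale at which the $F_i$-orbit still has diameter $\le\epsilon$. In the rescaled sequence $(\lambda_i r_i\tilde M,\tilde p,F_i)$, the limit space is then either still a cone of the form $\R^2\times C(Z')$ (an equivariant tangent/asymptotic configuration of $Y$), or one reaches a contradiction with how $\lambda_i$ was chosen.

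Next, pass to the quotients $M_i:=(\lambda_ir_i\tilde M)/F_i$. Noncollapsing of $M_i$ should follow from Euclidean volume growth of $\tilde M$, since $\#F_i$ is controlled once the orbit has diameter at most $\epsilon$ at a scale where the ambient space is close to a noncollapsed cone. We then apply Lemma \ref{lem:noncodim3} to the normal covers $\lambda_ir_i\tilde M\to M_i$, whose limit is $\R^2\times C(Z')$ (a space splitting $\R^{n-3}$). It forces $F_i$ trivial, contradicting (a).

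\textbf{Fallback.} If the global scheme fails, I would localize instead. Pick a point $q=(e,o_{Z'})$ on the $\R^2$-factor and blow up slowly at $q$; the tangent cone of $Y$ at $q$ is again $\R^2\times C(Z')$. Running the same argument there should avoid the orbit-growth problem.
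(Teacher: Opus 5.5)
There is a genuine gap, at exactly the step you call the main obstacle, and your critical-scale repair does not close it.

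First, the shrinking-orbit hypothesis fails. Lemma \ref{lem:noncodim3} requires $\diam(F_i\cdot\hat p_i)\to0$. Your scale $\lambda_i$ is chosen precisely so that the $F_i$-orbit has diameter about $\epsilon$, so this hypothesis fails by construction.

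Second, noncollapsing of $M_i=(\lambda_ir_i\tilde M)/F_i$ is unjustified. Nothing bounds $\#F_i$. Finite groups with small orbits at the base point can still have orders tending to infinity, for instance when they converge to a circle fixing the limit point, and then the quotients collapse.

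Third, and independently, you apply the lemma to the wrong space. Lemma \ref{lem:noncodim3} needs the \emph{base} $M_i$ to converge to a cone splitting off $\R^{n-3}=\R^2$, but you only argue this for the covers. A limit group fixing the vertex could rotate the $\R^2$-factor, and you give no reason why it acts trivially there.

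Your preliminary reductions (property $(P)$, rescaling until the orbit is a plane) actually create the obstacle. After them, small-displacement elements generate groups whose limit orbits fill the plane. The fallback at $q=(e,o_{Z'})$ changes nothing either, since $T_qY\cong Y$.

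The missing idea is that, because $\Gamma$ is torsion, you do not need to build $F_i$ at all. Fix one nontrivial $\gamma\in\Gamma$ and use the constant finite group $\spa\gamma$.
\begin{enumerate}
\item Its orbit $\spa\gamma\cdot\tilde p$ has a fixed diameter in $\tilde M$, hence diameter $\to0$ in $r_i\tilde M$.
\item Nontriviality is automatic.
\item $\tilde M/\spa\gamma$ still has Euclidean volume growth, so the quotients are noncollapsed.
\item Passing to a subsequence, $(r_i\tilde M,\tilde p,\Gamma,\spa\gamma)\GH(Y,y,G,H)$ with $H$ fixing $y$.
\item Since $G$ is abelian, $H$ fixes $G\cdot y$ pointwise. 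Hence it fixes $\mathrm{span}_\R(G\cdot y)=\R^k\times\{o_Z\}$ with $k\ge2$.
\item This is exactly what makes the base limit $Y/H\cong\R^k\times(C(Z)/H)$, given by Fukaya--Yamaguchi, split off $\R^2$.
\item Lemma \ref{lem:noncodim3} then forces $\gamma$ to be trivial, a contradiction.
\end{enumerate}
This is the paper's proof, and it needs neither property $(P)$ nor critical scaling.
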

\begin{proof}
	Suppose not. Fix a non-trivial $\gamma \in \Gamma$. Assume that for $r_i \to 0$, we have the following convergence,
	$$(r_i\tilde M,\tilde  p, \Gamma, \spa{\gamma}) \xrightarrow{GH} (Y, y, G, H).$$
	Then $(Y,y)\cong(\R^k \times C(Z),(0^k,o_Z))$, where $\R^k \times \{o_Z\} = \mathrm{span}_\R(G\cdot y)$, with $k \ge 2$. Since $G$ is abelian and $H$ fixes $y$, $H$ fixes $Gy$, and hence fixes $\mathrm{span}_\R(G\cdot y)$. So $H$ acts on the $\R^k$-factor of $Y$ trivially. By Fukaya-Yamaguchi \cite{FY92}, up to a subsequence,
	$$(r_i\tilde M/\spa{\gamma},[\tilde p]) \xrightarrow{GH} (Y/H,[y]).$$
	By the above discussion, $Y/H \cong \R^k \times \left(C(Z)/H\right)$. Note that the above convergence is noncollapsed. We can apply Lemma \ref{lem:noncodim3} to the above sequence, which yields that $\spa{\gamma}$ is trivial. This is a contradiction to the choice of $\gamma$.
\end{proof}
Therefore, to prove Theorem \ref{thm:torsion}, it suffices to show that every orbit $G\cdot y$ is either a single point or isometric to $\R$. We argue by contradiction in the remainder of the proof, assuming that this is not the case.

\begin{claim}\label{claim:orbit-type}
	For any $(Y,y,G)\in\Omega(\tilde M,\Gamma)$, one of the following holds:
	\begin{itemize}
		\item $G\cdot y=\{y\}$;
		\item $(Y,y)$ splits isometrically as $(\R\times C(Z),(0,o_Z))$, where $Z$ is a spherical $3$-manifold; moreover, the action of $G$ respects this splitting, acting on the $\R$-factor by reflection about a nonzero point and fixing $o_Z$.
	\end{itemize}
	Both types are realized in $\Omega(\tilde M,\Gamma)$.
\end{claim}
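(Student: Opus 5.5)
Fix $(Y,y,G)\in\Omega(\tilde M,\Gamma)$ with $G\cdot y\neq\{y\}$. By Lemma \ref{lem:1dimspanG}, $\mathrm{span}_\R(G\cdot y)$ is a line $L$, so the maximal Euclidean decomposition splits off $L$. My plan is to first show that $G$ preserves the splitting. Since $\Gamma$ is abelian, so is $G$, and hence $I(G)$ fixes $G\cdot y$ pointwise and therefore fixes $L$. Every $g\in G$ maps $G\cdot y$ to itself, so it preserves $L$ and acts on $L\cong\R$ by an isometry. Because $g$ preserves the Euclidean factor of the decomposition, it acts on $\R\times C(Z)$ as a product, and the cone point of the $C(Z)$-factor through $L$ is preserved.

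Next I want the second factor to be $C(Z)$ with $Z$ a $3$-dimensional cross-section. Here $\pi_1(M)$ is torsion, so every element of $\Gamma$ has finite order. I claim the restriction of $G$ to $L$ contains no translations. If some $g$ translated $L$ nontrivially, then $G\cdot y$ would contain an unbounded subset $\{g^k y\}$. The route I would try is this: if $G$ acted on $L$ through translations forming a closed noncompact subgroup, then a blow-down (which remains in $\Omega$) would produce an orbit containing a full line. I would then try to show that this forces a $\Z$-subgroup in a cover-like argument. This is the step I expect to be the main obstacle. My first attempt would be to argue that a limit translation in a torsion group gives, via Lemma \ref{lem:noncodim3} applied to $\langle\gamma\rangle$ quotients, a contradiction. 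Alternatively, I would use the structure (Remark \ref{rem:P}, after passing to cones with property $(P)$) to see that a translation part makes $G\cdot y\supset$ a lattice or line, and then use the torsion hypothesis through the finiteness of $\langle \gamma\rangle$ orbits to rule this out.

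Granting this, $G$ acts on $L$ by isometries fixing no translation part, hence by reflections, or trivially. A trivial action is excluded because $G\cdot y\neq\{y\}$. Two distinct reflections would compose to a translation, so the image of $G$ in $\Isom(L)$ is $\{\id,\text{a single reflection } s\}$, with $s$ reflecting about some point $t_0$. Then $G\cdot y=\{0,2t_0\}$ with $t_0\ne0$. Re-centering, this means reflection about a nonzero point, with the coordinate normalized so that $y=0$. For the cross-section, I would apply the argument of Lemma \ref{lem:dimIG}: the splitting off exactly one line, together with Euclidean volume growth, gives $Y=\R\times C(Z)$ with $Z$ an $\mathrm{RCD}(2,3)$ space. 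I would then argue that it is spherical by \cite[Theorem 1.4]{BPS24}, applied to the effective isotropy action. If no further $\R$ splits, this holds; if $C(Z)$ splits further, $I(G)$ would fix more directions and Lemma \ref{lem:noncodim3} (via the quotient argument of Lemma \ref{lem:1dimspanG}) would again give a contradiction.

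For realization, the first type occurs by blowing down: $G\cdot y=\{0,2t_0\}$ rescaled to infinity shrinks to a point, and $\Omega$ is closed under rescaling. The second type exists by the standing contrary assumption together with the dichotomy just proved. Alternatively, one can note that by Lemma \ref{lem:short-gen-cone} and the nontrivial cases a disconnected orbit must occur.
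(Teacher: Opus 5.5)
\textbf{Gap: ruling out translations along $L$.} There is a genuine gap at the step you flagged yourself. You need to exclude orbit types (3), $G\cdot y=\Z a$, and (4), $G\cdot y=L$, of Lemma~\ref{lem:1dim}. This cannot be done cone by cone from the torsion hypothesis, because finite order does not survive the limit. Elements of unbounded order can converge, after rescaling, to translations (think of rotations by angle $1/R$ about a point at distance $R$). So the limit group of a non-finitely-generated torsion group may well contain translations.

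Neither of your suggested tools closes this. Lemma~\ref{lem:noncodim3} requires $\diam(F_i\cdot\hat p_i)\to0$, and this is exactly what fails for $\spa{\gamma}$ when $\gamma$ converges to a translation. Finiteness of each individual $\spa{\gamma}$-orbit gives no uniform control in the limit. A single cone whose orbit is a line contradicts nothing locally. The obstruction is global, and your plan tries to prove the dichotomy for each cone without the global input.

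\textbf{What the paper does instead.} The paper runs a critical-scaling argument over all of $\Omega(\tilde M,\Gamma)$.
\begin{enumerate}
\item Let $S$ be the set of cones whose orbit has type (4). Any cone $0.01$-close to an element of $S$ has orbit of type (3) or (4).
\item Every cone of type (3) or (4) has an asymptotic cone of type (4), since blowing down $\Z a$ gives $L$.
\item Hence Corollary~\ref{cor:critical-scaling}(1) applies to $\mathcal U=B_{0.005}(S)$. It shows that every cone in $\Omega(\tilde M,\Gamma)$ has type (3) or (4).
\item The tangent cone at the reference point of a type-(3) cone has type (1). So only type (4) can occur.
\item This contradicts the standing assumption that some orbit is neither a point nor a line. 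Alternatively, if all orbits were connected, Lemma~\ref{lem:short-gen-cone} would give finite generation, hence finiteness of $\Gamma$, hence point orbits.
\item Therefore $S=\emptyset$. Type (3) is then excluded too, because it blows down to type (4).
\end{enumerate}
Only after this does your reflection analysis on $L$ go through. That part, and your realization argument, are fine.

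\textbf{A smaller point.} Your closing remark, that further splitting of $C(Z)$ would contradict Lemma~\ref{lem:noncodim3} via the argument of Lemma~\ref{lem:1dimspanG}, is unjustified. $I(G)$ is only known to fix $\mathrm{span}_\R(G\cdot y)=L$, and it may act nontrivially on additional Euclidean directions. The remark is also unnecessary: \cite{BPS24} applies directly to the $\mathrm{RCD}(2,3)$ cross-section $Z$, whether or not $C(Z)$ splits further.
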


\begin{proof}
	By Lemma \ref{lem:1dimspanG}, the orbit $G\cdot y$ is of one of the four types in Lemma \ref{lem:1dim}. Let $S$ be the collection of those $(Y,y,G)\in\Omega(\tilde M,\Gamma)$ whose orbit is of type (4), and suppose that $S\neq\emptyset$. If $(X,x,H)\in\Omega(\tilde M,\Gamma)$ satisfies $d_{GH}((X,x,H),(Y,y,G))<0.01$ for some $(Y,y,G)\in S$, then $(X,x,H)$ is of type (3) or (4). By Corollary \ref{cor:critical-scaling} (1) applied to $\mathcal U:=B_{0.005}(S)$, every element of $\Omega(\tilde M,\Gamma)$ has orbit of type (3) or (4). Since blowing up type (3) yields type (1), only type (4) can occur, contradicting that some orbit $G\cdot y$ is neither a single point nor isometric to $\R$. Hence $S=\emptyset$ and only types (1) and (2) can occur. And by \cite{BPS24}, $Z$ is homeomorphic to a spherical $3$-manifold.
\end{proof}

Since $\tilde M$ is simply connected, it is orientable. Up to passing to an index-$2$ subgroup of $\Gamma$ if necessary at the beginning of the proof, we may assume that every $\gamma\in\Gamma$ is orientation preserving. By Lemma \ref{lem:orientation}, every asymptotic cone at infinity of $\tilde M$ is orientable, and every element of any limit group $G$ is orientation preserving (for the orientability of Ricci limit spaces, see Definition \ref{def:orientability}).

By Claim \ref{claim:orbit-type}, there exists $(Y,y,G)\in\Omega(\tilde M,\Gamma)$ with $G\cdot y$ disconnected, which has an isometric splitting
\begin{equation}\label{eq:splitting}
	(Y,y)\cong(\R\times C(Z),(0,o_Z))
\end{equation}
in which $G$ acts on the $\R$-factor by reflection and $Z$ is a spherical $3$-manifold. Fix $\rho\in G$ with $\rho y\neq y$. By the preceding paragraph, $\rho$ is orientation preserving; together with Lemma \ref{lem:orientation-on-open-subsets}, this shows that its restriction to $Y\setminus(\R\times\{o_Z\})$, which is homeomorphic to $\R\times\R\times Z$, is orientation preserving. Since $\rho$ reverses the orientation of the first factor and preserves that of the second, it must reverse the orientation of $Z$.

By Propositions \ref{prop:isotropy-criteria} and \ref{prop:Pimpliesconnectedorbit}, $\dim I(G)_0\ge1$. As $I(G)$ commutes with $\rho$, it fixes both $y$ and $\rho y$, hence the whole line through them. Thus $I(G)$ acts effectively on $Z$. Applying Lemma \ref{lem:centralizers}, we obtain $\dim I(G)_0=1$ and a point $z_0\in Z$ fixed by $I(G)_0$. Consequently,
\begin{equation}\label{itm:F}
	\text{$I(G)_0$ fixes the $\R$-factor of the splitting \eqref{eq:splitting} and the ray $\{(0,t,z_0)\mid t\ge0\}$.}
\end{equation}
We fix a convergence $(r_i\tilde M,\tilde p,\Gamma)\GH(Y,y,G)$ for some $r_i\to0$.
\begin{lem}\label{lem:torsion-embedding}
	There exists an injective homomorphism $\iota\colon\Gamma\to I(G)$ such that, passing to a subsequence, for every $\gamma\in\Gamma$,
	$$(r_i\tilde M,\tilde p,\gamma,\spa{\gamma})\GH(Y,y,\iota(\gamma),\spa{\iota(\gamma)}).$$
\end{lem}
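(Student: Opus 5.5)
Since $\Gamma$ is a torsion abelian group acting by deck transformations, it is countable. The plan is to build $\iota$ on each element by a diagonal subsequence, check that it is a homomorphism landing in $I(G)$, and then prove injectivity from Lemma \ref{lem:noncodim3}.

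\textbf{Step 1: defining $\iota$.} Enumerate $\Gamma=\{\gamma_1,\gamma_2,\dots\}$. Each $\gamma_j$ moves $\tilde p$ by the fixed distance $d(\gamma_j\tilde p,\tilde p)$, which becomes $r_i\,d(\gamma_j\tilde p,\tilde p)\to0$ after rescaling. By equivariant Gromov--Hausdorff precompactness (Fukaya--Yamaguchi), after passing to a subsequence, $\gamma_j$ converges to some isometry $\iota(\gamma_j)\in G$ with $\iota(\gamma_j)y=y$, so $\iota(\gamma_j)\in I(G)$. Likewise $\spa{\gamma_j}$ converges to a closed subgroup $H_j\le G$. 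A diagonal argument makes these convergences hold simultaneously for all $j$.

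\textbf{Step 2: homomorphism and $H_j$.} Limits of products are products of limits, so $\iota$ is a homomorphism. Since $\gamma_j$ has finite order $m_j$, the group $\spa{\gamma_j}=\{\gamma_j^0,\dots,\gamma_j^{m_j-1}\}$ is a fixed finite set. Its limit $H_j$ is therefore exactly $\{\iota(\gamma_j)^k\}=\spa{\iota(\gamma_j)}$, which gives the stated convergence of cyclic groups.

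\textbf{Step 3: injectivity (main step).} Suppose $\iota(\gamma)=\id$ with $\gamma\ne e$. Then $\spa{\gamma}$ converges to the trivial group, so by Fukaya--Yamaguchi
\[
(r_i\tilde M/\spa{\gamma},[\tilde p])\GH (Y,y)\cong(\R\times C(Z),(0,o_Z)).
\]
The quotient $\tilde M/\spa{\gamma}$ has $\Ric\ge0$. This convergence is noncollapsed: the orbit $\spa\gamma\tilde p$ is a fixed finite set of at most $m$ points, so $\vol B_1$ of the quotient is at least $\vol B_1(\tilde p)/m$, which is uniformly positive by Euclidean volume growth of $\tilde M$.

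The normal covers $r_i\tilde M\to r_i\tilde M/\spa{\gamma}$ have deck group $\spa{\gamma}$, and its orbit diameter at $\tilde p$ tends to $0$ after rescaling. To apply Lemma \ref{lem:noncodim3} we need the limit to have the form $\R^{n-3}\times C(Z')$ with $n=5$, i.e.\ $\R^2\times C(Z')$, but $Y$ only splits off $\R$. This is the expected obstacle.

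The fix is to blow up at a regular point rather than at the vertex. Take a point $q$ far from the singular set, e.g.\ $q=(0,1,z)$ with $z\in Z$. Its tangent cone is $\R^5$, since $Z$ is a smooth spherical manifold. Choose points $q_i\in\tilde M$ converging to $q$ and rescale slowly by $\lambda_i\to\infty$, slowly enough that $\lambda_i r_i\,d(\gamma\tilde p,\tilde p)\to0$ and so that the rescaled quotients converge to $\R^5$. Because $\Gamma$ is abelian, $d(\gamma q_i,q_i)$ should be controlled by the displacement at $\tilde p$. More precisely, $\gamma$ converges to the identity uniformly on compact sets, so $r_i\,d(\gamma q_i,q_i)\to0$. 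The displacement needs to be slower than $\lambda_i^{-1}$, which can be arranged by choosing $\lambda_i$ depending on this rate.

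Lemma \ref{lem:noncodim3} then applies to the limit $\R^5$, viewed as $\R^{2}\times C(S^2)$, and forces $\spa\gamma$ to be trivial for large $i$. This contradicts $\gamma\ne e$. Hence $\iota$ is injective.
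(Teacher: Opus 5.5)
Your proof is correct, but the injectivity step takes a different route from the paper. The paper gets injectivity in one line from a general noncollapsing fact, \cite[Corollary 2.2]{PR18}: in a noncollapsed equivariant convergence, a finite group cannot converge to a group of smaller order. This gives $\mathrm{ord}(\gamma)=\#\spa{\gamma}\le\#K=\mathrm{ord}(\iota(\gamma))$, so $\iota$ is injective on each $\spa{\gamma}$, with no use of the shape of $Y$.

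You argue by contradiction instead. If $\iota(\gamma)=\id$, then $\spa{\gamma}$ converges to the identity uniformly on compact sets. You then blow up at an off-axis point $q=(0,1,z)$, where the tangent cones split off $\R^2$, and apply Lemma~\ref{lem:noncodim3} to the noncollapsed quotients $\lambda_ir_i\tilde M/\spa{\gamma}$. This is the same mechanism the paper uses right after this lemma to finish the proof of Theorem~\ref{thm:torsion}, there with $\gamma_\infty\neq\id$ fixing a half-plane. Your version stays inside the paper's own toolkit, but it depends on the specific geometry ($n=5$ and the line splitting of $Y$). The citation to \cite{PR18} works for any noncollapsed limit.

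A few details need fixing:
\begin{itemize}
\item $Z$ is only \emph{homeomorphic} to a spherical manifold; its metric is merely $\mathrm{RCD}(2,3)$. So $T_qY\cong\R^5$ is unjustified for arbitrary $z$. You do not need it: every tangent cone at $(0,1,z)$ has the form $\R^2\times C(Z')$, which is exactly what Lemma~\ref{lem:noncodim3} requires. Alternatively, choose $z$ to be a regular point.
\item The remark that abelianness controls $d(\gamma q_i,q_i)$ by the displacement at $\tilde p$ is false and should be dropped. Uniform convergence of $\gamma$ to $\id$ on compact sets is the correct justification. The condition on $\lambda_ir_i\,d(\gamma\tilde p,\tilde p)$ is irrelevant.
\item $\lambda_i$ must be chosen against the displacement of $\spa{\gamma}$ uniformly on a fixed ball around $q_i$, or equivalently by a diagonal argument in the equivariant topology. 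Controlling the displacement at $q_i$ alone is not enough: the blown-up limit group must be trivial for the quotients to converge to $T_qY$.
\item The noncollapsing at $[q_i]$ on the new scale follows from the Euclidean volume growth of $\tilde M/\spa{\gamma}$ together with Bishop--Gromov. Your computation at $[\tilde p]$ at unit $r_i$-scale covers only that one point and scale.
\end{itemize}
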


\begin{proof}
	Fix $\gamma\in\Gamma$. Passing to a subsequence, we have $(r_i\tilde M,\tilde p,\gamma,\spa{\gamma})\GH(Y,y,\gamma_\infty,K)$, and $K<I(G)$ since $\gamma$ is torsion. 
	
	Every $g\in K$ is a limit of powers of $\gamma$: passing to a further subsequence, there exist $j_i\in[0,\mathrm{ord}(\gamma))$ with $\gamma^{j_i}\GH g$. Since the $j_i$ take values in a finite set, we may assume $j_i\equiv j$ for some $j\in[0,\mathrm{ord}(\gamma))$, and then $g=\gamma_\infty^j\in\spa{\gamma_\infty}$. Thus $K\subset\spa{\gamma_\infty}$, and $\#K\le\mathrm{ord}(\gamma_\infty)\le\mathrm{ord}(\gamma)$.

	Applying \cite[Corollary 2.2]{PR18}, we obtain $\mathrm{ord}(\gamma)=\#\spa{\gamma}\le\#K$. Together with the inequality above, this yields $K=\spa{\gamma_\infty}$ and $\mathrm{ord}(\gamma)=\mathrm{ord}(\gamma_\infty)$.
	
	Since $\Gamma$ is countable, a standard diagonal argument gives a map $\iota:\gamma\mapsto\gamma_\infty$. It is obvious that $\iota$ is an injective homomorphism. This completes the proof.
\end{proof}

 Since $\Gamma$ is infinite and $I(G)$ has finitely many connected components, there exists a nontrivial $\gamma\in\Gamma$ with $\gamma_\infty:=\iota(\gamma)\in I(G)_0$, and
 \begin{equation}\label{eq:conv}
 	(r_i\tilde M,\tilde p,\spa\gamma)\GH(Y,y,\spa{\gamma_\infty}).
 \end{equation}
 By (\ref{itm:F}) and the discussion above, $\spa{\gamma_\infty}$ fixes every point of the set $\{(s,t,z_0)\in Y \mid s\in\R,\ t\ge0\}$. Under the convergence \eqref{eq:conv}, choose $x_i\in r_i\tilde M$ with $x_i\to y':=(0,1,z_0)$, and note that $\sigma_i:=\diam_{r_i\tilde M}(\spa\gamma\cdot x_i)\to\diam(\spa{\gamma_\infty}\cdot y')=0$. Take $R_i\to\infty$ slowly enough that $R_i\sigma_i\to0$ and $R_ir_i\to0$, and
 $$(R_ir_i\tilde M,x_i,\spa\gamma)\GH(T_{y'}Y,y'_\infty,\spa{\gamma'_\infty}),$$
 where $T_{y'}Y$, a tangent cone of $Y$ at $y'$, splits off an $\R^2$-factor, and $\gamma'_\infty$ is a limit of $\gamma_\infty$. Moreover, $\spa{\gamma'_\infty}$ fixes this $\R^2$-factor pointwise, so the quotient $T_{y'}Y/\spa{\gamma'_\infty}$ is again a metric cone splitting off an $\R^2$-factor. Note that $\tilde M/\spa{\gamma}$ still has Euclidean volume growth. Hence we can apply Lemma \ref{lem:noncodim3} to the diagram
 \begin{equation*}\label{dia:00}
 	\xymatrix@C=2.5cm{
 		(R_ir_i\tilde M,x_i,\spa\gamma)\ar[d]\ar[r]^{GH} & (T_{y'}Y,y'_\infty,\spa{\gamma'_\infty})\ar[d]\\
 		(R_ir_i\tilde M,x_i)/\spa\gamma\ar[r]^{GH} & (T_{y'}Y,y'_\infty)/\spa{\gamma'_\infty},
 	}
 \end{equation*}
 and conclude that $\gamma$ is trivial, contradicting the choice of $\gamma$. This completes the proof of Theorem \ref{thm:torsion}.

\section{Virtual abelianness, poles, and escape rate}

In this section, we prove Theorem \ref{thm:main-codim5} and Theorem \ref{thm:main-codim6}. 

Since the proof of virtual abelianness in Theorem \ref{thm:main-codim5} is analogous to that in Theorem \ref{thm:main-codim6}, we prove the latter. 
\subsection*{Proof of Theorem \ref{thm:main-codim6}}

We first need the following lemma, which is \cite[Proposition 7.4]{IsPa65}.
\begin{lem}\label{lem:virtuallyabeliangroups}
	Let $G$ be a group and let $m$ be a positive integer. Then $G$ contains an abelian subgroup of index at most $m$ if and only if every finitely generated subgroup of $G$ contains an abelian subgroup of index at most $m$.
\end{lem}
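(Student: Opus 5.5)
The direction ``$\Rightarrow$'' is immediate. If $A<G$ is abelian with $[G:A]\le m$ and $H<G$ is any subgroup, then $H\cap A$ is abelian. Moreover $[H:H\cap A]\le[G:A]\le m$, since the map $H/(H\cap A)\to G/A$ is injective. So the work is in ``$\Leftarrow$'', which I would prove by a compactness (inverse-limit) argument over the directed set of finitely generated subgroups.

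Let $\mathcal F$ be the set of finitely generated subgroups of $G$, directed by inclusion. For $H\in\mathcal F$, let $S(H)$ be the set of abelian subgroups $A<H$ with $[H:A]\le m$. By hypothesis $S(H)\neq\emptyset$. Moreover $S(H)$ is finite: a subgroup of index $\le m$ in a finitely generated group is determined by the transitive action of $H$ on its cosets, i.e.\ by a homomorphism from $H$ to some $\mathrm{Sym}(k)$ with $k\le m$. There are only finitely many such homomorphisms, since each is determined by the images of a finite generating set. For $H\subset H'$ in $\mathcal F$, define the restriction $\rho_{H'H}\colon S(H')\to S(H)$ by $A'\mapsto A'\cap H$. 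This is well defined by the same index estimate as above, and it is compatible: $\rho_{H'H}\circ\rho_{H''H'}=\rho_{H''H}$. Thus $(S(H),\rho)$ is an inverse system of nonempty finite sets over a directed set. Its inverse limit is nonempty by the standard Tychonoff argument: in $\prod_H S(H)$, the sets of families compatible on a finite subset of $\mathcal F$ are closed, nonempty (pick an upper bound in $\mathcal F$ and restrict from it), and have the finite intersection property. This compactness step is the only non-formal point, and it is the step I expect to be the main obstacle to state cleanly.

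Pick a compatible family $(A_H)_{H\in\mathcal F}$, so that $A_{H'}\cap H=A_H$ whenever $H\subset H'$, and set $A:=\bigcup_{H\in\mathcal F}A_H$.

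First, $A$ is an abelian subgroup. Given $a,b\in A$, choose $H\in\mathcal F$ containing both; compatibility gives $a,b\in A_H$, so $ab^{-1}\in A_H\subset A$ and $ab=ba$.

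Second, $[G:A]\le m$. Suppose instead there were $g_0,\dots,g_m\in G$ in pairwise distinct cosets of $A$. Let $H=\langle g_0,\dots,g_m\rangle\in\mathcal F$. Since $[H:A_H]\le m$, two of them, say $g_j,g_k$, satisfy $g_j^{-1}g_k\in A_H\subset A$, a contradiction.

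This completes the plan for ``$\Leftarrow$''.
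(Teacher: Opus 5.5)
Your proof is correct. The paper does not prove this lemma at all: it quotes it from \cite[Proposition 7.4]{IsPa65}. What you give is the standard self-contained compactness proof.

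\begin{itemize}
\item Finiteness of the set of subgroups of index at most $m$ in a finitely generated group makes $(S(H),\rho)$ an inverse system of nonempty finite sets over the directed set $\mathcal F$.
\item Tychonoff's theorem for products of finite discrete spaces gives a compatible family $(A_H)_{H\in\mathcal F}$.
\item The union $A=\bigcup_H A_H$ inherits commutativity, and it inherits the index bound by pigeonhole inside $\langle g_0,\dots,g_m\rangle$.
\end{itemize}

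The citation buys brevity. Your route buys a transparent argument that the exact bound $m$ passes from finitely generated subgroups to $G$ with no loss, at the cost of invoking Tychonoff (or K\"onig's lemma for directed inverse systems). The ``$\Rightarrow$'' direction and the finiteness of $S(H)$ are fine as written.

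One step deserves one more line: the claim that $A$ is a subgroup. Membership $a\in A$ only gives $a\in A_{H_1}$ for some $H_1\in\mathcal F$, not $a\in A_H$ for the particular $H$ you chose. Record first that compatibility forces $A\cap H=A_H$ for every $H\in\mathcal F$. Indeed, if $a\in A_{H_1}\cap H$, put $H'=\langle H_1\cup H\rangle\in\mathcal F$. Then $a\in A_{H'}\cap H_1\subset A_{H'}$, hence $a\in A_{H'}\cap H=A_H$. With this identity, your phrase ``compatibility gives $a,b\in A_H$'' is justified.

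The final index step needs none of this, since $A_H\subset A$ holds by the definition of $A$.
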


	By Lemma \ref{lem:virtuallyabeliangroups}, it suffices to show that there exists $C(n,\theta)>0$ such that every finitely generated subgroup of $\pi_1(M)$ contains an abelian subgroup of index at most $C(n,\theta)$. We assume that $H<\pi_1(M)$ is finitely generated. Since all the hypotheses are preserved under passage to covers of $M$, by \cite{KW11}, $H$ contains a nilpotent subgroup $\Gamma$ of index bounded by some $C(n)$, with nilpotency length at most $n$. In what follows, put $N:=\tilde M/\Gamma$.
	
	For arbitrary $r_i\to0$, up to a subsequence, we have the following commutative diagram, 
	\begin{equation}\label{dia:01}
		\xymatrix@C=2.5cm{
			(r_i\tilde M,\tilde p,\Gamma)\ar[d]\ar[r]^{GH} & (Y,y,G)\ar[d]\\
			(r_iN,p)\ar[r]^{GH} & (X,x).
		}
	\end{equation}
	By assumption, $X$ splits off an $\R^{n-6}$-factor. Lifting it to $Y$ and using that $\tilde M$ has Euclidean volume growth, we obtain $(Y,y)\cong(\R^{n-6}\times\R^b\times C(Z),(0^{n-6},0^b,o_Z))$, where $\diam Z<\pi$ and $G$ acts trivially on the first $\R^{n-6}$-factor. Clearly $b\in\{0,1,\ldots,6\}$. If $b\ge5$, then Cheeger--Colding's codimension-$2$ regularity \cite{CC97} and Colding's rigidity \cite{Col97} force $M$ to be flat; hence $M$ is a product of $\R^{n-6}$ with a flat $6$-manifold, and the conclusion follows from Bieberbach's theorem. Thus we may assume $b\le4$. In particular, $G\cdot y$ is a closed submanifold of $\R^b$ with dimension $\le 4$. 
	
\textbf{Case 1:} $\dim G\cdot y\le 3$ for every $(Y,y,G)\in\Omega(\tilde M,\Gamma)$. Note that $G\cdot y\subset\{0^{n-6}\}\times\R^b\times\{o_Z\}$. By \cite[Proposition 2.2]{Ye24}, for some $C>0$ and all sufficiently large $R$, $\#\Gamma(R)\le CR^{3.1}$, where $\Gamma(R):=\{\gamma\in\Gamma\mid d(\tilde p,\gamma\tilde p)\le R\}$. Fix a finite symmetric generating set $S\subset\Gamma$, and put $L:=\max\{d(\tilde p,\gamma\tilde p)\mid\gamma\in S\}$. For $\gamma\in\Gamma$, denote by $\|\gamma\|_S$ its word length with respect to $S$, that is, the least integer $m\ge0$ such that $\gamma=s_1\cdots s_m$ for some $s_1,\ldots,s_m\in S$. Then $d(\tilde p,\gamma\tilde p)\le L\|\gamma\|_S$ for every $\gamma\in\Gamma$. Let $\beta_\Gamma(R):=\#\{\gamma\in\Gamma\mid \|\gamma\|_S\le R\}$. For sufficiently large $R$, we have $\{\gamma\in\Gamma\mid \|\gamma\|_S\le R\}\subset\Gamma(LR)$, and hence
$$\beta_\Gamma(R)\le\#\Gamma(LR)\le C(L)R^{3.1}.$$
Hence Proposition \ref{prop:poly3} shows that $\Gamma$ is virtually $\Z^k$ for $k\in\{0,1,2,3\}$.

\textbf{Case 2:} $\dim G_1\cdot y_1=4$ for some $(Y_1,y_1,G_1)\in\Omega(\tilde M,\Gamma)$. This forces $b=4$, so $G_1\cdot y_1$ is isometric to $\R^4$. \cite[Lemma 2.7]{HH25b} then implies that $G\cdot y$ is isometric to $\R^4$ for every $(Y,y,G)\in\Omega(\tilde M,\Gamma)$. Then \cite[Theorem 2.8]{HH25b} implies that $\Gamma$ is virtually $\Z^4$.

Hence we have proved that $\Gamma$ is virtually $\Z^k$ for some $0\le k\le6$. A finitely generated virtually abelian nilpotent group has finite commutator subgroup. By \cite[Lemma 4.5]{Pan24} and \cite[Theorem 5.1]{Pan25}, the index of the center $Z(\Gamma)$ of $\Gamma$ is bounded by a constant $C_1(n,\theta)$. Hence $H$ contains an abelian subgroup $Z(\Gamma)$ of index at most $C(n)C_1(n,\theta)$. As $H$ was arbitrary, the required conclusion follows.

\begin{rem}
	The hypothesis of \cite[Lemma 2.7]{HH25b} with $k=4$ requires the maximal Euclidean factor of every asymptotic cone to have dimension at most four. Here the cone in Case 2 has maximal Euclidean factor $\R^{n-6}\times\R^4$, whose dimension exceeds four when $n>6$. However, $G$ acts trivially on the $\R^{n-6}$-factor, and every central orbit lies in the $G$-invariant slice $\{0^{n-6}\}\times\R^b\times\{o_Z\}$, where $b\le4$. The Euclidean orbit and critical scaling arguments in the proof of that lemma apply to these slices and give the same conclusion.
\end{rem}

\subsection*{Proof of Theorem \ref{thm:main-codim5} (2) \& (3)}

By Theorem \ref{thm:main-codim5} (1), $\pi_1(M)$ is finitely generated and virtually abelian. Let $\Gamma$ be a normal abelian subgroup of $\pi_1(M)$ of finite index; then $\Gamma\cong H\oplus T$, where $H\cong\Z^k$ is free abelian and $T$ is its torsion subgroup.

For any $r_i\to0$, up to a subsequence, consider the equivariant convergence
$$(r_i\tilde M,\tilde p,H,\Gamma,\pi_1(M))\GH(Y,y,H_\infty,\Gamma_\infty,G).$$

Applying Theorem \ref{thm:free-abelian-fg} and Remark \ref{rem:codim5} to $\tilde M/H$, we conclude that $H_\infty\cdot y$ is isometric to $\R^l$, so $Y/H_\infty$ has a pole at $yH_\infty$. Since $\Gamma/H$ and $\pi_1(M)/\Gamma$ are both finite, the quotient groups $\Gamma_\infty/H_\infty$ and $G/\Gamma_\infty$ fix $yH_\infty$ and $y\Gamma_\infty$, respectively; hence $Y/G$ has a pole at $yG$. As $r_i\to0$ was arbitrary, $(Y/G,yG)$ runs through $\Omega(M)$, and each such cone has a pole at its reference point; this proves (2). Finally, conclusion (3) follows from \cite[Proposition 4.2]{Pan22}.

\section{Appendix}

\subsection{Euclidean orbits contained in a line}
\begin{lem}\label{lem:1dim}
	Let $G$ be a closed abelian subgroup of the Euclidean isometry group $\Isom(\R^m)$. Suppose that the orbit of the origin $G\cdot o=\{g\cdot o \mid g\in G\}$ is contained in a one-dimensional linear subspace $L\subset\R^m$. Then $G\cdot o$ is one of the following four types:
	\begin{itemize}
		\item [(1)] $G\cdot o=\{o\}$;
		\item [(2)] $G\cdot o=\{o,t\}$ for some nonzero $t\in L$;
		\item [(3)] $G\cdot o=\Z a=\{na \mid n\in\Z\}$ for some nonzero $a\in L$;
		\item [(4)] $G\cdot o=L$.
	\end{itemize}
	In cases (2)--(4), the $G$-action splits on $L\oplus L^{\perp}$, and $G|_L$ acts by translations or reflections.
\end{lem}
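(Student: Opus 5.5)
We work in the affine structure of $\R^m$ and write each $g\in G$ as $g(v)=A_gv+b_g$ with $A_g\in O(m)$, so that $g\cdot o=b_g$. Let $L=\R u$ with $|u|=1$. The first step is to identify the orbit as a set. The orbit $G\cdot o$ is a closed subset of $L$, since $G$ is closed and acts properly. The stabilizer $G_o$ is normal because $G$ is abelian. Therefore each $h\in G_o$ fixes every point $g\cdot o$ of the orbit: indeed $hg\cdot o=gh\cdot o=g\cdot o$. If the orbit is not $\{o\}$, it contains some $t\neq o$, so every element of $G_o$ fixes $L$ pointwise.

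Next we show that every $g\in G$ preserves $L$. Fix $g$ and pick $t=su\in G\cdot o$ with $s\ne0$, which exists in cases (2)--(4). For any $g'\in G$, the point $g(g'\cdot o)=g'(g\cdot o)$ lies in the orbit, so $g$ maps $G\cdot o$ into itself. Hence $g$ maps $o$ and $t$ into $L$. As $g$ is an affine isometry, it maps the line through $o$ and $t$, namely $L$, onto a line containing two points of $L$, that is, onto $L$ itself. It follows that $A_g$ preserves $L$, and hence also $L^\perp$, since $A_g$ is orthogonal. We also have $b_g\in L$. The action therefore splits on $L\oplus L^\perp$, and $g|_L$ is an isometry of $L\cong\R$, so it is either a translation $x\mapsto x+c$ or a reflection $x\mapsto -x+c$. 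This proves the final sentence of the lemma.

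It remains to classify the orbit, which is the image of the homomorphism $G\to\Isom(\R)$, $g\mapsto g|_L$, applied to $0$. Let $\bar G$ be the image of this homomorphism; it is an abelian subgroup of $\Isom(\R)$ and has the same orbit through $0$. We split into two cases.

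\emph{Case A: $\bar G$ contains a reflection $\rho(x)=-x+c$.} Every element of $\bar G$ commutes with $\rho$. A translation by $a$ commutes with $\rho$ only when $a=0$, and two reflections $x\mapsto -x+c$ and $x\mapsto -x+c'$ commute only when $c=c'$. Hence $\bar G\subset\{\id,\rho\}$. The orbit is then $\{0,c\}$, which is type (2) if $c\ne0$ and type (1) otherwise.

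\emph{Case B: $\bar G$ consists only of translations.} Then the orbit is an additive subgroup of $\R$. It is closed, so it is $\{0\}$, $\Z a$, or $\R$, giving types (1), (3), and (4) respectively.

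The only delicate point is closedness of the orbit, which follows from properness of closed subgroups of $\Isom(\R^m)$ acting on $\R^m$. The rest is elementary.
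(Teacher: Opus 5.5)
Your proof is correct, and it takes a different route from the paper.

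The paper splits according to $\dim(G/G_o)\le 1$, using that $G$ is a Lie group and $G\cdot o\cong G/G_o$:
\begin{itemize}
\item If the dimension is one, the orbit is a closed one-dimensional submanifold of $L$, hence open and closed, hence all of $L$.
\item If it is zero, the orbit is discrete. The paper picks an orbit point $v$ of minimal norm and $g=(A,v)\in G$, observes $Av=\pm v$, and obtains $\Z v$ by minimality in the translation subcase. In the reflection subcase it obtains $\{o,v\}$, because commutativity forces every $h\in G$ to fix $v/2$.
\end{itemize}

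You instead first prove that \emph{every} element of $G$ preserves $L$ and $L^\perp$, using only that each $g$ maps the orbit into itself and the orbit contains two distinct points of $L$. You then classify the image $\bar G<\Isom(\R)$ using the commutation relations of translations and reflections together with the structure of closed subgroups of $\R$.

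What each route buys:
\begin{itemize}
\item Your argument needs no Lie theory or dimension count, and it treats the continuous and discrete cases uniformly.
\item It also proves the final sentence of the lemma explicitly. The paper only checks that the chosen $g$ preserves $L$, although it tacitly needs this for all $h$, e.g.\ to know $h(v/2)\in L$ in its reflection subcase.
\item The paper's route, in exchange, disposes of the case $G\cdot o=L$ in one line.
\end{itemize}

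Three small remarks.
\begin{itemize}
\item That $g$ maps $G\cdot o$ into itself does not require commutativity, since $g(g'\cdot o)=(gg')\cdot o$. Commutativity is used only in your Case A.
\item Your paragraph showing that $G_o$ fixes $L$ pointwise is never used later and can be dropped.
\item Phrase the existence of $t$ as ``whenever $G\cdot o\neq\{o\}$'' rather than ``in cases (2)--(4)'', so as not to presuppose the classification.
\end{itemize}
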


\begin{proof}
	The orbit $G\cdot o$ is homeomorphic to the homogeneous space $G/G_o$, which is a manifold of dimension $\dim G-\dim G_o$. Since $G\cdot o\subset L$ and $\dim L=1$, we have $\dim(G/G_o)\le 1$.
	
	Case 1: $\dim(G/G_o)=1$.
	Then $G\cdot o$ is a one-dimensional embedded submanifold of the one-dimensional manifold $L$. It is also closed in $L$, because orbits of closed groups of isometries are closed. Hence $G\cdot o$ is a nonempty open and closed subset of the connected space $L$, so $G\cdot o=L$.
	
	Case 2: $\dim(G/G_o)=0$.
	Then $G/G_o$ is discrete, so $G\cdot o$ is a discrete closed subset of $L$. The identity component $G_0$ of $G$ is contained in $G_o$.
	
	If $G\cdot o=\{o\}$, we are done. Assume from now on that $G\cdot o\neq\{o\}$. Since $G\cdot o$ is discrete and nonzero, there exists a vector $v\in G\cdot o$ of minimal positive norm. Choose $g=(A,v)\in G$. Then $Av=v$ or $Av=-v$.
	
	Subcase 1: $Av=v$.
	Then for any $t\in\R$, $g\cdot tv=(t+1)v$. So $g$ acts on $L$ as a translation by $v$. By the minimality of $\|v\|$, $G\cdot o=\Z v$.
	
	Subcase 2: $Av=-v$.
	Then for any $t\in\R$, $g\cdot tv=(1-t)v$. That is, $g$ is a reflection about $0.5v$. If there exists $h\in G$ which moves $0.5v$, then $g$ fixes $2$ distinct points of $L$ hence all of $L$, a contradiction. Hence $G\cdot o=\{o,v\}$.
\end{proof}

\subsection{Fixed point lemmas on $3$-manifolds}

All manifolds in this subsection are assumed to be closed and connected.

\begin{lem}\label{lem:S^1fixedpt}
	Let $M$ be a closed topological $3$-manifold, and let $\phi_i,\phi:S^1\times M\to M$ be effective continuous actions. Suppose that $\phi_i\to\phi$ uniformly. If $$M_{\phi}^{S^1}:=\{x\in M\mid\phi(g,x)=x\text{ for every }g\in S^1\}\neq\varnothing,$$ then $M_{\phi_i}^{S^1}\neq\varnothing$ for all sufficiently large $i$.
\end{lem}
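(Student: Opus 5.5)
Since $\phi$ is effective with nonempty fixed point set, I will work with the local structure near a fixed point and use a Lefschetz/Euler-characteristic or local-degree argument that is stable under uniform perturbation. I would go by contradiction. Suppose that, after passing to a subsequence, each $\phi_i$ acts without fixed points. A continuous fixed-point-free $S^1$-action on a closed $3$-manifold has all orbits $1$-dimensional with finite isotropy. Hence $M$ is a Seifert fibred space, with orbit space a closed $2$-orbifold $B_i$ and projection $\pi_i\colon M\to B_i$.

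For the limit action $\phi$ I use the classical description of effective $S^1$-actions on $3$-manifolds (Raymond). Its fixed point set $F$ is a nonempty union of circles. Each component $C\subset F$ has an invariant solid torus neighbourhood $C\times D^2$ on which $S^1$ acts by rotation of the $D^2$-factor. Fix one such solid torus $V=C\times D^2$ and a meridian disk $D=\{c\}\times D^2$.

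The key local observation: on $\partial D$, the $\phi$-orbit map $g\mapsto\phi(g,x)$, composed with radial projection in the $D^2$-factor, has degree $\pm1$. Uniform convergence $\phi_i\to\phi$ then shows that, for large $i$, the $\phi_i$-orbits of points of $\partial D$ are small perturbations of the rotation circles. They therefore stay in a thin collar near $\partial V$ and wind once around the core $C$ in the $D^2$-direction.

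I then want to derive a contradiction from this winding. First I look at the $\phi_i$-orbit $O_i$ of a point $x\in\partial D$. It is a circle in $V\setminus C$ that is homotopic, in the collar, to the meridian $\partial D$. Since $\phi_i$ is fixed-point free, $O_i=S^1/\Z_{m_i}$ is a regular or exceptional fibre. The orbit map $S^1\to O_i$ has degree $m_i$. On the other hand, the loop $g\mapsto\phi_i(g,x)$ is uniformly close to the meridian loop $g\mapsto\phi(g,x)$, which is injective. Closeness to an injective loop forces $m_i=1$ for large $i$, so $O_i$ is a principal fibre that is homotopic to a meridian of $V$.

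Next I use that meridians are null-homotopic in $V$. The family of orbits through the points of $D$ gives a map $D\times S^1\to M$, $(x,g)\mapsto\phi_i(g,x)$. Its restriction to $\{c\}\times S^1$ is a $\phi_i$-orbit near $c$, and it is uniformly close to the constant loop at $c$, which is the $\phi$-orbit of the fixed point $c$. So for large $i$ a fibre lies in a tiny ball. Then the fibre class $h$ is null-homotopic in $M$, hence has finite order in $\pi_1(M)$. At the same time, $h$ is freely homotopic to the meridian and is an honest circle of a fixed-point-free action.

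To reach a contradiction I would invoke the fact that for a fixed-point-free effective circle action the regular fibre has infinite order in $\pi_1$, except when $M$ is covered by $S^3$ or $S^2\times S^1$ in specific ways. Concretely, the orbit map $S^1\to M$ induces an injection on $\pi_1$ unless the base is a bad or spherical orbifold. The exceptional cases (lens spaces, $S^2\times S^1$, spherical manifolds with positive-Euler-number Seifert fibrations) need a separate argument. For these I would instead use an index argument. The local model shows that the orbits near the $\phi$-fixed circle $C$ have isotropy degenerating (lengths $\to0$). Meanwhile, for $\phi_i$ all orbits passing through the small ball around $c$ are principal circles of length $\to0$. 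A neighbourhood of such a short fibre is then a fibred solid torus whose meridian disk must intersect $C$-nearby orbits. I expect this to contradict the winding-number-$1$ computation on $\partial D$: the $\phi_i$-orbit of a point of $\partial D$ would be both essential (winding once around the short fibre) and short in the local product structure.

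\textbf{Main obstacle.} I expect the main obstacle to be exactly this final step: making rigorous the incompatibility between a degenerating family of short principal fibres of $\phi_i$ near $c$ and the degree-one winding of the $\phi_i$-orbits on $\partial D$ around those fibres. The first route I would try is a linking-number or local-degree argument in a small ball around $c$.
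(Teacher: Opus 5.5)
There is a genuine gap, and it sits exactly in the case the paper needs. Your only contradiction is drawn from $\pi_1(M)$. The $\phi_i$-orbit of $c$ lies in a tiny ball, so the fibre class is trivial, and you then need fibres to be $\pi_1$-essential in $M$. When $\pi_1(M)$ is finite this gives nothing. On $S^3\subset\mathbb{C}^2$ the Hopf action is fixed-point free with null-homotopic fibres, while $(z,w)\mapsto(\lambda z,w)$ has a fixed circle. The paper applies the lemma with $M=Z_\infty$, a spherical $3$-manifold.

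For these cases you offer only an ``index argument'' that you expect to work, and the linking heuristic behind it fails as stated. The $\phi_i$-orbit of $c$ bounds a small disc near $c$ that misses the orbits over $\partial D$, so it does not link them; moreover, those orbits are not short. The limiting orbits are coplanar concentric circles, so no nonzero linking number is forced anywhere. Your step $m_i=1$ on $\partial D$ is correct, but it does not feed into any contradiction.

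The missing idea is to compute $\pi_1$ in a proper invariant open set rather than in $M$. Take $V'=C\times D^2_{1/4}\Subset V=C\times D^2_{1/2}$. For large $i$, uniform convergence puts the $\phi_i$-saturation $U:=\phi_i(S^1\times V')$ inside $V$. So $U$ is a connected, orientable, noncompact open $3$-manifold with a fixed-point-free circle action, and orientability rules out special exceptional orbits. Its orbit space is a noncompact $2$-orbifold with only cone points, hence good with universal cover $\R^2$. Therefore orbit loops have infinite order in $\pi_1(U)$. Yet the $\phi_i$-orbit of $c$ lies in a small ball inside $V'\subset U$, so it is null-homotopic in $U$. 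This closes the argument for every closed $M$ with no case analysis.

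The paper instead argues globally and avoids Seifert/orbifold theory:
\begin{enumerate}
\item Palais averaging gives $\phi_i$-to-$\phi$ equivariant maps $\psi_i\simeq\id_M$.
\item An odd prime $p$ larger than all isotropy orders of $\phi_i$ makes $C_p$ act freely under $\phi_i$ but with fixed points under $\phi$.
\item Hanke--Puppe's surjectivity theorem for degree-one $C_p$-equivariant maps then gives the contradiction, with the orientation double cover handling nonorientable $M$.
\end{enumerate}
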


\begin{proof}
	By Raymond's smoothing theorem \cite[Theorem 6]{Raymond68}, each action is smoothable. By \cite[Chapter VI, Theorems 4.1 and 2.2]{Bredon72}, choose a finite-dimensional orthogonal representation $\rho:S^1\to\mathrm{O}(V)$, an equivariant embedding $e:(M,\phi)\hookrightarrow V$, and an equivariant retraction $r:U\to e(M)$ from an invariant open neighborhood.
	
	Following \cite[Section 3]{Pal61}, define
	$$A_i(x):=\int_{S^1}\rho(g^{-1})e(\phi_i(g,x))\,\mathrm{d}g,$$
	where $\mathrm{d}g$ is normalized Haar measure. Then $A_i\to e$ uniformly, and Haar invariance gives $A_i(\phi_i(h,x))=\rho(h)A_i(x)$. For large $i$, the map $\psi_i:=e^{-1}\circ r\circ A_i$ is therefore well defined and equivariant from $(M,\phi_i)$ to $(M,\phi)$. Compactness and uniform convergence ensure that every segment joining $e(x)$ to $A_i(x)$ lies in $U$. Applying $e^{-1}\circ r$ to these segments gives $\psi_i\simeq\id_M$.
	
	Fix such an $i$, and suppose that $M_{\phi_i}^{S^1}=\varnothing$. All isotropys of $\phi_i$ are then finite, and their orders take only finitely many values by \cite[Chapter IV, Proposition 1.2]{Bredon72}. Choose an odd prime $p$ larger than all these orders, and let $C_p<S^1$ have order $p$. Since $C_p$ cannot lie in any stabilizer of $\phi_i$, we have
	\begin{equation}\label{eq:emptyfix}
			M_{\phi_i}^{C_p}=\varnothing,\qquad M_\phi^{C_p}\supset M_\phi^{S^1}\neq\varnothing.
	\end{equation}

	Suppose first that $M$ is orientable. The map $\psi_i$ is $C_p$-equivariant and has degree $1$. Since $p$ is odd, \cite[Theorem 5]{HankePuppe06} implies that $\psi_i|_{M_{\phi_i}^{C_p}}:M_{\phi_i}^{C_p}\to M_\phi^{C_p}$ is surjective, contradicting (\ref{eq:emptyfix}).
	
	Suppose now that $M$ is nonorientable. Let $\pi:\hat M\to M$ be the orientation double cover. The actions on local orientations give canonical lifts $\hat\phi_i$ and $\hat\phi$. Lifting the homotopy from $\id_M$ to $\psi_i$ gives a map $\hat\psi_i:\hat M\to\hat M$ homotopic to $\id_{\hat M}$, so $\deg\hat\psi_i=1$.

		The map $\hat\psi_i$ is equivariant. Indeed, for each $h\in S^1$, the maps $\hat\psi_i\circ\hat\phi_i(h,\cdot)$ and $\hat\phi(h,\cdot)\circ\hat\psi_i$ are lifts of the same map $\psi_i\circ\phi_i(h,\cdot)=\phi(h,\cdot)\circ\psi_i$ on $M$, so they differ by a deck transformation. This deck transformation depends continuously on $h$ and is the identity at $h=e$. Since $S^1$ is connected, it is always the identity. Moreover, since $p$ is odd, a point of $M$ is $C_p$-fixed for $\phi$ exactly when both of its lifts in $\hat M$ are. It follows that $\hat M_{\hat\phi}^{C_p}\neq\emptyset$. And it is obvious that $\hat M_{\hat\phi_i}^{C_p}=\varnothing$. Applying \cite[Theorem 5]{HankePuppe06} to $\hat\psi_i$ gives the same contradiction.
	
	Therefore $M_{\phi_i}^{S^1}\neq\varnothing$ for all sufficiently large $i$.
\end{proof}

\begin{lem}\label{lem:T2-no-fixed-points}
	Let $T^2$ act continuously and effectively on a topological $3$-manifold $M$. Then $M^{T^2}=\varnothing$.
\end{lem}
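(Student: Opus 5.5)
We argue by contradiction: suppose $x_0\in M^{T^2}$. The plan is to reduce to a local statement near $x_0$ and then read off a contradiction from the classification of circle actions on $3$-manifolds near fixed points. Topological-category subtleties can be handled at the outset. By Raymond's work on $T^2$- and $S^1$-actions on $3$-manifolds (as in Lemma \ref{lem:S^1fixedpt}, \cite{Raymond68}), the action may be taken smooth. Alternatively, one can avoid smoothing by using the local structure of the orbit space $M/S^1$ near fixed points, which Raymond describes explicitly.

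\textbf{Smooth case.} Assume the action is smooth. Since $T^2$ is compact and fixes $x_0$, we may choose an invariant Riemannian metric. The exponential map at $x_0$ then conjugates the action near $x_0$ to the linear isotropy representation $T^2\to\mathrm{O}(T_{x_0}M)\cong\mathrm{O}(3)$. The image of $T^2$ is a connected abelian subgroup of $\mathrm{SO}(3)$. Maximal tori of $\mathrm{SO}(3)$ are circles, so this image has dimension at most $1$. Hence the representation has a kernel $K$ of dimension at least $1$, and $K$ acts trivially on $T_{x_0}M$. Therefore $K$ fixes a neighborhood of $x_0$ pointwise. By the linearization argument, the fixed point set of each element of $K$ is then open. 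It is also closed, so it is all of the connected manifold $M$. Thus $K$ acts trivially on $M$, which contradicts effectiveness.

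\textbf{Topological case.} If we prefer not to invoke smoothing, we use Raymond's classification of effective $S^1$-actions on $3$-manifolds. Choose a circle $S\subset T^2$. It acts effectively, and its fixed set $F$ near $x_0$ is a union of arcs or circles, locally a $1$-manifold; it cannot be a $2$-dimensional piece. Moreover, near a fixed point the action is locally standard: it looks like rotation of $\R^2\times\R$ about the axis $\{0\}\times\R$, with a free action on the complement. The complementary circle $S'$ commutes with $S$, so it preserves $F$ and acts on the local orbit space, which is a half-plane with boundary $F$. Since $S'$ fixes $x_0$, it must act on the normal $\R^2$-slice at $x_0$ by rotations commuting with the standard rotation. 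This forces $S'$ to act through the same circle, up to a finite cover. Hence some nontrivial subgroup of $T^2$ acts trivially on an open set, and effectiveness is again contradicted. To complete this step, we use the standard fact (Newman's theorem) that a nontrivial compact Lie group action on a connected manifold cannot fix a nonempty open set.

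\textbf{Main obstacle.} The hard part is the topological-category justification that the action is locally linear near $x_0$. I would cite Raymond's smoothing theorem, exactly as in the proof of Lemma \ref{lem:S^1fixedpt}, and then run the smooth argument.
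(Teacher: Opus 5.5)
Your smooth-case argument is correct and is essentially the paper's proof: smooth the action via Raymond's theorem, average a metric, and note that at a fixed point the isotropy representation of the connected group $T^2$ lands in $\mathrm{SO}(3)$, which has rank one, whereas an element acting trivially on $T_{x_0}M$ is an isometry fixing $x_0$ with identity differential and hence is the identity on the connected manifold $M$ (the paper phrases this as faithfulness of the isotropy representation, you phrase it as a nontrivial kernel acting trivially). The alternative ``topological case'' sketch is unnecessary once smoothing is cited, and as written it is only heuristic, so you can drop it.
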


\begin{proof}
	By \cite[Proposition~5]{Raymond68}, the action is equivariantly smoothable. Thus we may assume that it is smooth. Suppose that $p\in M^{T^2}$. After averaging a Riemannian metric over $T^2$, the action is isometric. The isotropy representation $\iota_p\colon T^2\longrightarrow O(T_pM)$ is faithful. Indeed, an element in its kernel is an isometry fixing $p$ with identity differential, hence is the identity on the connected manifold $M$; effectiveness then makes it the identity of $T^2$.
	
	Since $T^2$ is connected, the image of $\iota_p$ lies in $SO(T_pM)\cong SO(3)$. This is impossible because $\operatorname{rank}SO(3)=1$. Therefore $M^{T^2}=\varnothing$.
\end{proof}

\begin{lem}\label{lem:centralizers}
	Let $M$ be a closed spherical topological $3$-manifold. Let $G$ be a compact connected abelian Lie group acting continuously and effectively on $M$. Suppose that an orientation reversing homeomorphism $\rho:M\to M$ centralizes the $G$-action, that is, $\rho(gx)=g\rho(x)$ for all $g\in G$ and $x\in M$. Then $\dim G\le1$. Moreover, if $G\cong S^1$, then $M^G\neq\varnothing$.
\end{lem}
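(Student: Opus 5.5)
Our plan is to reduce to the smooth setting and then use the classification of effective torus actions on closed $3$-manifolds together with the isotropy representation at fixed points. By Raymond's smoothing theorem \cite{Raymond68}, the $G$-action is equivariantly smoothable; we fix a smooth structure in which $G$ acts smoothly and, after averaging, isometrically. The map $\rho$ is a priori only a homeomorphism, so every step below uses only topological invariants (orientation, fixed point sets, orbit types), which are preserved by $\rho$ because it commutes with $G$.

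\textbf{Step 1: $\dim G\le 2$, and $\dim G=2$ is excluded.} A compact connected abelian Lie group is a torus $T^k$. Effectiveness on a $3$-manifold gives $k\le 3$, and $k=3$ would make $M=T^3$, which is not spherical. Suppose $k=2$. By Lemma \ref{lem:T2-no-fixed-points} there are no fixed points. The Orlik--Raymond classification of $T^2$-actions on closed $3$-manifolds then shows that the orbit space is a closed interval, with the two endpoint orbits circles carrying isotropy circles $S^1_a,S^1_b\subset T^2$, and the principal orbits $T^2$ lie over the interior. If the orbit space were a circle, $M$ would be a $T^2$-bundle over $S^1$, which is not spherical.

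Because $\rho$ commutes with $G$, it maps orbits to orbits of the same isotropy type. Hence it induces a homeomorphism $\bar\rho$ of the interval. If $S^1_a\ne S^1_b$ (the lens space case, where the two isotropies are distinct), then $\bar\rho$ fixes both endpoints. In either case $\bar\rho$ preserves some interior point, so $\rho$ maps a principal orbit $T^2\cdot x$ to itself, and there it acts as translation by some $g_0\in T^2$, since it commutes with $G$ acting freely. Composing $\rho$ with $g_0^{-1}\in G$, which preserves orientation because $G$ is connected, yields an orientation reversing map $\rho'$ that centralizes $G$ and fixes the orbit $T^2\cdot x$ pointwise. Near this orbit, $M$ is $T^2\times(-\epsilon,\epsilon)$, and $\rho'$ acts as $\mathrm{id}\times\bar\rho'$. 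Orientation reversal then forces $\bar\rho'$ to flip the interval. Such a flip exchanges the two endpoint orbit types, so $S^1_a=S^1_b$. But then the isotropies along the two ends coincide, and $M$ is $S^1\times S^2$-like (with $S^1_a=S^1_b$ it is $S^2\times S^1$), which is not spherical. This is a contradiction, so $\dim G\le1$.

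\textbf{Step 2: if $G\cong S^1$, then $M^G\neq\varnothing$.} Suppose instead that the action is fixed-point free. We then get a Seifert fibration $M\to B$ whose fibers are the $S^1$-orbits. Since $G$ is connected and $M$ is orientable (spherical), the fibration is oriented along fibers. Hence the base orbifold $B$ is orientable, and the Euler number $e$ is nonzero because $M$ is spherical and not $S^2\times S^1$. The homeomorphism $\rho$ commutes with $G$, so it preserves each fiber together with its orientation. It therefore descends to $\bar\rho$ on $B$, and because $\rho$ reverses the orientation of $M$, $\bar\rho$ must reverse the orientation of $B$. A fiber-orientation-preserving, base-orientation-reversing map changes the sign of the Euler number, so $e(M)=-e(M)$ and hence $e=0$. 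This contradicts $e\ne0$. Therefore $M^G\ne\varnothing$.

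The main obstacle is making the Euler number sign argument rigorous for homeomorphisms between Seifert fibrations in the topological category. We expect to handle it by using the smoothing and the naturality of the Seifert invariants $(e;(\alpha_j,\beta_j))$ under fiber-preserving homeomorphisms, which is classical (Orlik, Seifert).
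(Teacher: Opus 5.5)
Your proposal works in outline and takes a genuinely different route from the paper. However, one claim in Step~1 is false as written and needs repair.

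\textbf{Comparison with the paper.} The paper never classifies the action. Since $H_1(M;\Q)=H_2(M;\Q)=0$ and $\rho$ acts by $-1$ on $H_3(M;\Q)$, the rational Lefschetz number is $L(\rho)=2$. After Raymond smoothing, $M$ is a compact $G$-ENR, and Komiya's equivariant Lefschetz theorem for torus actions gives $L(\rho|_{M^G})=L(\rho)=2$. Hence $M^G\neq\varnothing$ for a torus $G$ of \emph{any} dimension. The bound $\dim G\le1$ then follows at once: a subtorus $T^2\subset G$ would satisfy $M^{T^2}\supset M^G\neq\varnothing$, contradicting Lemma~\ref{lem:T2-no-fixed-points}. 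So the paper proves the fixed-point assertion first, for all $k$ simultaneously, using only rational homology and one general theorem.

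You instead use the Orlik--Raymond picture of $T^2$-manifolds and the Seifert invariants of fixed-point-free circle actions. This is specific to dimension $3$ and leans on classification plus the naturality of $e$. That naturality is classical: fibre- and orientation-preserving homeomorphisms preserve $e$, and $e(-M)=-e(M)$. In exchange, your route gives a transparent reason for the lemma: an orientation-reversing centralizer forces $e=-e$. Your Step~2 is fine, provided ``preserves each fiber'' is read as ``maps fibres to fibres preserving their orientation''.

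\textbf{The problem in Step~1.} The only case needing an argument is $S^1_a\neq S^1_b$. There $\bar\rho$ fixes both endpoints, and then it need not fix any interior point (e.g.\ $t\mapsto t^2$ on $[0,1]$). So ``in either case $\bar\rho$ preserves some interior point'' fails, and you get no $\rho$-invariant principal orbit.

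You can compute the orientation sign without an invariant orbit. The principal stratum is equivariantly $T^2\times(0,1)$, and commuting with $T^2$ forces $\rho(g,t)=(g\,\phi(t),\bar\rho(t))$ for some continuous $\phi\colon(0,1)\to T^2$. Since $\phi$ is null-homotopic, $\rho$ is homotopic through homeomorphisms of this form to $\mathrm{id}\times\bar\rho$. Hence $\rho$ and $\bar\rho$ have the same orientation sign. This also corrects your statement that $\rho'$ acts as $\mathrm{id}\times\bar\rho'$, which is true only up to such a twist. As $\bar\rho$ fixes the endpoints, it is increasing, so $\rho$ preserves orientation, a contradiction.

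Alternatively, you can drop the $T^2$ analysis entirely. Only finitely many circles arise as identity components of isotropy groups, so pick a circle $C\subset T^2$ not among them. Since $M^{T^2}=\varnothing$, $C$ acts effectively without fixed points and is centralized by $\rho$, contradicting Step~2. This mirrors the paper's order of deduction: fixed points first, then the dimension bound.
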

\begin{proof}
	Since $G$ is compact, connected, and abelian, $G\cong T^k$ for some $k\ge0$. Since $M$ is spherical, $\pi_1(M)$ is finite and $M$ is orientable. Hence $H_1(M;\Q)=0$, and Poincaré duality gives $H_2(M;\Q)=0$. Writing $L$ for the rational Lefschetz number, we have $L(\rho)=1-(-1)=2$.
	
	If $k=0$, there is nothing to prove. Suppose that $k\ge1$. By \cite[Proposition 5]{Raymond68}, the action is smoothable. Hence $M$ is a compact $G$-ENR (equivariant Euclidean neighborhood retract) by \cite[Chapter VI, Theorems 4.1 and 2.2]{Bredon72}. By \cite[Theorem~1(iv)]{Kom87}, $L\bigl(\rho|_{M^G}\bigr)=L(\rho)=2$, and hence $M^G\neq\varnothing$.
	
	If $k\ge2$, choose a subtorus $K\cong T^2\subset G$. The restricted $K$-action is effective. Lemma~\ref{lem:T2-no-fixed-points} gives $M^K=\varnothing$, contradicting $M^G\subset M^K$. Therefore $k\le1$. 
\end{proof}

\subsection{Orientability and orientation preserving maps}\label{subsec:orientability-of-maps}

We follow \cite{BrBrPi24} for the definition of orientability.

\begin{defn}\label{def:orientability}
	Let $(X,d,\mathcal H^n)$ be a noncollapsed $\mathrm{RCD}(\kappa,n)$ space with no boundary, and set
	$$X_{\mathcal M}:=\{x\in X\mid x\text{ has an open neighborhood homeomorphic to }\R^n\}.$$
	\begin{itemize}
		\item [(1)] $X$ is called orientable if $X_{\mathcal M}$ is orientable.
		\item [(2)] An orientation of $X_{\mathcal M}$ is called an orientation of $X$.
		\item [(3)] Fix an orientation of $X$. A homeomorphism $f:X\to X$ is called orientation preserving if $f|_{X_{\mathcal M}}$ preserves orientation.
	\end{itemize}
\end{defn}

Item (1) agrees with \cite[Definition 1.6]{BrBrPi24}; see also \cite[Proposition 1.7 and Theorem 2.1]{BrBrPi24}.

\begin{lem}\label{lem:orientation-on-open-subsets}
	Let $X$ be oriented as in Definition \ref{def:orientability}, and let $f:X\to X$ be a homeomorphism. For every open subset $A\subseteq X$ that is a topological $n$-manifold, give $A$ and $f(A)$ the orientations induced from $X_{\mathcal M}$. Then $f$ preserves orientation if and only if $f|_A:A\to f(A)$ preserves orientation for every such $A$. It suffices to check this on one nonempty open subset of $X_{\mathcal M}$.
\end{lem}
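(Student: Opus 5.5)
The statement is essentially local: orientation preservation of a homeomorphism between oriented $n$-manifolds can be read off from local homology generators. The plan is to reduce everything to the manifold part $X_{\mathcal M}$ and compare orientations there.

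\textbf{Step 1: reduce to $X_{\mathcal M}$.} Since $f$ is a homeomorphism, it maps $X_{\mathcal M}$ onto itself, because the property of having a neighborhood homeomorphic to $\R^n$ is topological. Any open $A\subseteq X$ that is a topological $n$-manifold satisfies $A\subseteq X_{\mathcal M}$: every point of $A$ has a Euclidean neighborhood inside $A$, which is open in $X$. So $A$ and $f(A)$ are open subsets of $X_{\mathcal M}$ and inherit orientations by restriction. This is exactly the induced orientation in the statement.

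\textbf{Step 2: the forward direction.} Recall that an orientation of an $n$-manifold is a continuous choice of generators $\mu_x\in H_n(X_{\mathcal M},X_{\mathcal M}\setminus\{x\};\Z)$. By excision, $H_n(A,A\setminus\{x\})\cong H_n(X_{\mathcal M},X_{\mathcal M}\setminus\{x\})$ for $x\in A$, and this identification is compatible with the restricted orientation. Hence if $f_*\mu_x=\mu_{f(x)}$ for all $x\in X_{\mathcal M}$, then the same equality holds for $f|_A$ at every $x\in A$.

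\textbf{Step 3: the converse and the one-subset criterion.} Taking $A=X_{\mathcal M}$ itself gives the converse immediately. For the last sentence, use connectedness. Define $\epsilon(x)\in\{\pm1\}$ by $f_*\mu_x=\epsilon(x)\mu_{f(x)}$; this is locally constant on $X_{\mathcal M}$. The key input is that $X_{\mathcal M}$ is connected when $X$ is connected. I would cite \cite{BrBrPi24} for this: for a noncollapsed RCD space without boundary, the singular set has codimension at least $2$ and does not disconnect the regular part. This is the step I expect to be the main obstacle, because it depends on that external structural result rather than on elementary topology. Given connectedness, $\epsilon$ is constant, so checking $\epsilon=1$ on one nonempty open subset of $X_{\mathcal M}$ suffices.

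In the applications the spaces are cones over connected cross-sections, so they are connected. If connectedness of $X_{\mathcal M}$ were unavailable, I would restrict the claim to the component containing the test set. That is enough for the use in the torsion case, where $Y\setminus(\R\times\{o_Z\})\cong\R\times\R\times Z$ is connected and dense in $Y_{\mathcal M}$, which is the set the orientation is defined on.
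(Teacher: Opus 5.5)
Your proposal is correct and follows the paper's proof. Every such $A$ lies in $X_{\mathcal M}$ and $f(X_{\mathcal M})=X_{\mathcal M}$, the converse takes $A=X_{\mathcal M}$, and the last assertion follows from local constancy of the orientation sign together with connectedness of $X_{\mathcal M}$. The paper makes your somewhat vague ``does not disconnect'' citation precise: for small $\varepsilon$, the set $A_\varepsilon(X)$ is connected, dense, and contained in $X_{\mathcal M}$ (\cite{BrBrPi24}). Hence $X_{\mathcal M}$ lies between a connected set and its closure, which is exactly the argument you sketch for $\R\times\R\times Z$ in the application.
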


\begin{proof}
	Every such $A$ lies in $X_{\mathcal M}$, and
	$f(X_{\mathcal M})=X_{\mathcal M}$. Thus the first assertion follows
	by restriction, taking $A=X_{\mathcal M}$ for the converse.
	
	For sufficiently small $\varepsilon>0$, the set $A_\varepsilon(X)$
	is connected and dense in $X$; see
	\cite[p.~3, following~(1.3)]{BrBrPi24}.
	Since $A_\varepsilon(X)\subset X_{\mathcal M}$, the latter is connected.
	The orientation sign of $f|_{X_{\mathcal M}}$ is locally constant,
	hence constant, proving the last assertion.
\end{proof}

\begin{lem}\label{lem:orientation}
	Let
	$$
	(X_i,d_i,\mathcal H^n,p_i,f_i)\GH(X,d,\mathcal H^n,p,f)
	$$
	be an equivariant pointed Gromov--Hausdorff convergent sequence of
	noncollapsed $\mathrm{RCD}(\kappa,n)$-spaces with no boundary, where
	$f_i\in\Isom(X_i)$ and $f\in\Isom(X)$.
	If every $X_i$ is orientable and every $f_i$ preserves orientation,
	then $X$ is orientable and $f$ preserves orientation.
\end{lem}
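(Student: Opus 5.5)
The approach is to treat orientability and orientation preservation as local homological data on the manifold parts, and to transport this data through the Gromov--Hausdorff convergence. The tool for the transport is Cheeger--Colding/Kapovitch--Mondino-type Reifenberg regularity for noncollapsed $\mathrm{RCD}$ spaces.

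\textbf{Orientability of $X$.} The first part is essentially \cite[Theorem~2.1 / stability under noncollapsed convergence]{BrBrPi24}, and I would invoke it directly. If a self-contained route is wanted, I would use the following characterization from \cite{BrBrPi24}: $X$ is orientable if and only if the connected dense set $A_\varepsilon(X)$ is. Every loop in $A_\varepsilon(X)$ can be covered by finitely many balls that are $\delta$-Reifenberg flat. For large $i$, the GH approximations produce corresponding chains of balls in $A_{2\varepsilon}(X_i)$. By the intrinsic Reifenberg theorem, there are bi-Hölder homeomorphisms from these balls onto Euclidean balls, and these are compatible with the approximations up to small error. Thus the local orientation classes in $H_n(B,B\setminus\{pt\};\Z)$ transfer along the chain. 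A loop in $X$ that reversed orientation would therefore yield an orientation-reversing loop in $X_i$, which is a contradiction.

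\textbf{Orientation preservation of $f$.} By Lemma \ref{lem:orientation-on-open-subsets}, it suffices to check this on one small ball. Pick a regular point $x\in A_\varepsilon(X)$ and a small Reifenberg-flat ball $B=B_r(x)$ such that $f(B)=B_r(fx)$ is also Reifenberg flat. Join $x$ to $fx$ by a path $c$ inside the connected set $A_\varepsilon(X)$. Then $f$ preserves orientation exactly when the local degree of $f\colon B\to f(B)$ agrees with the orientation transported along $c$. Now choose $x_i\to x$ and paths $c_i\to c$ in $X_i$, and use $f_i\to f$. For large $i$, the Reifenberg charts on $B_r(x_i)$ and $B_r(f_ix_i)$ are uniformly close to the limit charts, and $f_i$ is uniformly close to $f$ in these charts. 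Hence the local degrees agree, since degree is stable under $C^0$-small perturbation of a map between balls relative to a deleted center. The transport along $c_i$ also matches transport along $c$. Since $f_i$ preserves orientation, so does $f$.

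\textbf{Main obstacle.} The delicate point is making the chart comparison rigorous. The Reifenberg homeomorphisms for $X_i$ and for $X$ are built independently, so one must show that the composite maps (chart of $X$) $\circ$ (GH approximation) $\circ$ (chart of $X_i$)$^{-1}$ are $C^0$-close to the identity on slightly smaller Euclidean balls. Such a map has degree $\pm1$ on small spheres, and its sign is what must be tracked. I would handle this by constructing all charts simultaneously by the Cheeger--Colding method, which applies to a sequence converging to a Reifenberg-flat ball. Alternatively, I would cite the corresponding stability statement in \cite{BrBrPi24}.
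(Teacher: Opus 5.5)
Your proposal is correct and follows essentially the paper's route. Orientability of $X$ is quoted from the stability result of \cite{BrBrPi24} (the paper cites Theorem~1.16 there), and orientation preservation comes from transporting local orientations through charts that approximate the Gromov--Hausdorff convergence, together with stability of the local degree under $C^0$-small perturbation.

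The paper resolves your ``main obstacle'' by your second option. The gluing argument in \cite[proof of Theorem~4.1]{BrBrPi24} gives a single open embedding $h_i\colon U\to (X_i)_{\mathcal M}$ of a connected set $U$ containing both $x$ and $f(x)$. Then $h_i^{-1}\circ f_i\circ h_i\to f$ locally uniformly, and one application of the degree-stability sublemma replaces your path-transport bookkeeping.
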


\begin{proof}
	By \cite[Theorem~1.16]{BrBrPi24}, $X$ is orientable.
	Fix an orientation of $X$. For small $\varepsilon>0$,
	$A:=A_\varepsilon(X)$ is a connected, $f$-invariant open subset
	of $X_{\mathcal M}$.
	
	Fix $x\in A$. The gluing argument in
	\cite[proof of Theorem~4.1, Proof~I]{BrBrPi24}
	gives a connected open set $U\Subset A$ containing $x$ and $f(x)$,
	and open embeddings $h_i:U\longrightarrow (X_i)_{\mathcal M}$ approximating the Gromov--Hausdorff convergence.
	Since $U$ is connected, we may orient each $X_i$ so that $h_i$
	preserves orientation.
	
	Choose a small connected neighborhood $V\Subset U$ of $x$ such that
	$f(\overline V)$ lies in an oriented coordinate chart $W\subset U$.
	For all large $i$, the maps $g_i:=h_i^{-1}\circ f_i\circ h_i:V\longrightarrow W$ are well defined, preserve orientation, and converge locally uniformly
	to $f|_V$.
	Applying Sublemma~\ref{lem:stability-of-orientation-sign} below in the chart $W$, we conclude that $f|_V$ preserves orientation.
	Lemma~\ref{lem:orientation-on-open-subsets} completes the proof.
	
	\begin{slem}\label{lem:stability-of-orientation-sign}
		Let $U$ be a connected oriented topological $n$-manifold without
		boundary, and let $f_i,f:U\to\R^n$ be open embeddings.
		If $f_i\to f$ locally uniformly, then their orientation signs agree
		for all sufficiently large $i$.
	\end{slem}
	
	\begin{proof}
		Fix $x\in U$. Replacing $f_i$ and $f$ by $f_i-f_i(x)$ and $f-f(x)$
		preserves their orientation signs and locally uniform convergence.
		Thus we may assume $f_i(x)=f(x)=0^n$.
		Choose $r>0$ with $\overline{B_r(0^n)}\subset f(U)$, and put
		$D:=f^{-1}(\overline{B_r(0^n)})$.
		For large $i$, we have $\sup_D|f_i-f|<r/2$.
		
		Define the continuous homotopy
		$$
		H_i:[0,1]\times D\to\R^n,\qquad
		H_i(t,y):=(1-t)f(y)+tf_i(y).
		$$
		Since $|f(y)|=r$ for $y\in\partial D$, we have
		$$
		|H_i(t,y)|
		\ge r-t|f_i(y)-f(y)|>r/2
		\qquad (t\in[0,1],\ y\in\partial D).
		$$
		Thus no point of $\partial D$ is mapped to $0^n$ at any time.
		In particular, every $H_i(t,\cdot)$ is a map of pairs $(D,\partial D)\longrightarrow
		(\R^n,\R^n\setminus\{0^n\})$. Since $H_i(0,\cdot)=f|_D$ and $H_i(1,\cdot)=f_i|_D$,
		homotopy invariance gives the same induced map
		$$
		H_n(D,\partial D;\Z)\longrightarrow
		H_n(\R^n,\R^n\setminus\{0^n\};\Z).
		$$
		
		Both embeddings have $x$ as their unique preimage of $0^n$.
		The natural maps
		$$
		H_n(D,\partial D;\Z)
		\longrightarrow H_n(D,D\setminus\{x\};\Z)
		\longrightarrow H_n(U,U\setminus\{x\};\Z)
		$$
		are isomorphisms, by radial deformation retraction in the chart $f$
		and excision, respectively.
		Hence $f_i$ and $f$ induce the same map on local homology at $x$.
		Their orientation signs therefore agree at $x$, and thus on $U$
		by connectedness.
	\end{proof}
\end{proof}

\subsection{Nilpotent groups of growth degree strictly less than four}

The following proposition follows from \cite[Theorem 3.10 and Proposition 3.11]{DHL17}.

\begin{prop}\label{prop:poly3}
	Let $\Gamma$ be a finitely generated nilpotent group whose polynomial growth degree is less than $4$. Then $\Gamma$ is virtually $\Z^k$ for some $0\le k\le3$.
\end{prop}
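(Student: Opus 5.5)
The plan is to pass from the polynomial growth bound to the rational Mal'cev completion and then use the Bass--Guivarc'h formula. Since $\Gamma$ is finitely generated nilpotent, it has a torsion-free normal subgroup $\Gamma'$ of finite index. The growth type is a commensurability invariant, so $\Gamma'$ also has polynomial growth degree $d<4$. Let $\Gamma'=\Gamma_1\supset\Gamma_2\supset\cdots$ be its lower central series. By the Bass--Guivarc'h formula,
$$d=\sum_{j\ge1}j\cdot\operatorname{rank}(\Gamma_j/\Gamma_{j+1}).$$
This quantity is an integer, so the hypothesis forces $d\le3$.

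Next I will enumerate the possibilities. If $\Gamma'$ is abelian, then $\Gamma'\cong\Z^k$ with $k=d\le3$, and we are done. Otherwise $\Gamma_2$ is infinite, because a torsion-free nilpotent group with finite commutator subgroup is abelian. Hence $\operatorname{rank}(\Gamma_2/\Gamma_3)\ge1$ or a deeper quotient has positive rank. In addition, $\operatorname{rank}(\Gamma_1/\Gamma_2)\ge2$, since a nilpotent group whose abelianization has rank $\le1$ is virtually cyclic. This already gives $d\ge 2+2=4$, contradicting $d\le3$.

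It follows that $\Gamma'$ is abelian, so $\Gamma$ is virtually $\Z^k$ with $0\le k\le3$. This is the content of \cite[Theorem 3.10 and Proposition 3.11]{DHL17}, which I would cite for the growth formula and for the classification of the low-degree cases.

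The main obstacle is the rank claim for the abelianization: a nilpotent group whose abelianization has rational rank $\le1$ has $\Gamma_2$ finite. I would prove it using the surjections
$$(\Gamma_1/\Gamma_2)^{\otimes j}\to\Gamma_j/\Gamma_{j+1},$$
given by iterated commutators. If the abelianization has rank $\le1$, then for $j\ge2$ the image of the tensor power is spanned by commutators of a single generator with itself, up to torsion. It therefore has rank $0$, so every $\Gamma_j/\Gamma_{j+1}$ with $j\ge2$ is finite.
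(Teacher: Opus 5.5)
Your proposal is correct and follows the same route as the paper: the growth degree is an integer (Bass--Guivarc'h, the content of \cite[Theorem 3.10]{DHL17}), so $d\le 3$, and the low-degree classification then gives a finite-index $\Z^d$. The only difference is that the paper cites \cite[Proposition 3.11]{DHL17} for the classification, whereas you prove it directly. You show that a nonabelian torsion-free nilpotent group has $\operatorname{rank}(\Gamma_1/\Gamma_2)\ge2$ (your tensor-power argument for this is sound) and infinite $\Gamma_2$, which forces $d\ge4$.
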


\begin{proof}
	If $\Gamma$ is finite, take $k=0$. Otherwise, its growth degree $d$ is a positive integer by \cite[Theorem 3.10]{DHL17}, so $d\in\{1,2,3\}$. By \cite[Proposition 3.11]{DHL17}, $\Gamma$ contains a finite index free abelian subgroup of rank $d$.
\end{proof}

\bibliographystyle{alpha}
\bibliography{ref}

\end{document}